\documentclass[11pt]{article}

\usepackage[margin=1in]{geometry}
\usepackage{amsmath,amssymb,amsthm,mathtools}
\usepackage{microtype}
\usepackage{enumitem}
\usepackage{booktabs}
\usepackage{xurl}
\usepackage[hidelinks]{hyperref}
\hypersetup{
  pdftitle={Coprime actions and characters nonvanishing on the fixed-point subgroup},
  pdfauthor={Eric Hou}
}

\newtheorem{lettertheorem}{Theorem}

\newtheorem{lettercorollary}[lettertheorem]{Corollary}
\newtheorem{theorem}{Theorem}[section]
\newtheorem{proposition}[theorem]{Proposition}
\newtheorem{lemma}[theorem]{Lemma}
\newtheorem{corollary}[theorem]{Corollary}
\theoremstyle{definition}

\newtheorem*{problem}{Problem 21.100}
\newtheorem*{problem63}{Problem 6.3}
\theoremstyle{remark}
\newtheorem{remark}[theorem]{Remark}

\newcommand{\F}{\mathbf F}
\newcommand{\Irr}{\operatorname{Irr}}
\newcommand{\Ind}{\operatorname{Ind}}

\newcommand{\supp}{\operatorname{supp}}
\newcommand{\wt}{\operatorname{wt}}
\newcommand{\one}{\mathbf 1}

\title{Coprime Actions and Characters Non-vanishing\\
on the Fixed-point Subgroup}
\author{Eric Hou}
\date{July 30, 2026}

\begin{document}
\maketitle

\begin{abstract}
Let a finite group $A$ act coprimely on a finite group $G$ and put
$C=C_G(A)$.  A classical theorem of Burnside asserts that the irreducible
characters of a group vanishing nowhere are exactly the linear ones,
and Navarro asked
in Problem 21.100 of the 21st Kourovka Notebook whether the coprime
analogue holds: is the number of $A$-invariant $\chi\in\Irr(G)$ with
$\chi_C$ nowhere zero always $|C/C'|$?  We answer this negatively.  For
$A$ cyclic of order $21$ acting on $G=B\rtimes V$, where
$V=\F_4^2\oplus\F_8$ and $B$ is the group of Boolean functions on $V$,
the fixed subgroup $C\cong C_2^{12}$ carries all $4096$ invariant
characters while only $1728$ of them are nowhere zero on $C$.

Because $C$ is abelian the same example refutes Problem 6.3 of Navarro's
problem list, and with it the corresponding statement about the head
characters of Isaacs; passing to $G\rtimes A$ refutes Problem 6.7, a
conjecture Isaacs reports is supported by abundant computational
evidence.  The construction needs only that $V$ have an $A$-stable
subset of half its size.  This holds for infinitely many pairs $(A,V)$,
and for none of dimension below $7$, so the example is minimal over all
operator groups of odd order.  Exact machine verifications, independent
of the proofs, accompany the paper.
\end{abstract}

\section{Introduction}

A classical theorem of W.~Burnside asserts that a nonlinear
$\chi\in\Irr(G)$ must vanish at some element of $G$: the irreducible
characters of $G$ that are nonvanishing on all of $G$ are exactly the
linear ones, so that their number is $|G/G'|$ (see
\cite[Theorem 3.15]{isaacs-book}).

Now let $A$ act on $G$ by automorphisms with $(|A|,|G|)=1$, and let
\[
 C=C_G(A)
\]
be the fixed-point subgroup.  Write
\[
 \Irr_A(G)=\{\chi\in\Irr(G):\chi^a=\chi\text{ for all }a\in A\}
\]
for the set of $A$-invariant irreducible characters of $G$, and
\[
 N_A(G)=\{\chi\in\Irr_A(G):\chi(c)\neq0\text{ for all }c\in C\}
\]
for those whose restriction to $C$ is nonvanishing.  Taking $A=1$ gives
$C=G$ and $\Irr_A(G)=\Irr(G)$, so Burnside's theorem says
$|N_1(G)|=|G/G'|$.  The following problem asks whether this persists
relative to a coprime operator group.

\begin{problem}[G.~Navarro \cite{kourovka}]
Suppose that $A$ and $G$ are finite groups such that $A$ acts coprimely
on $G$ by automorphisms.  Let $C=C_G(A)$ be the fixed-point subgroup,
and let $C'$ denote its derived subgroup.  Is it true that the number of
$A$-invariant irreducible characters $\chi$ of $G$ whose restriction
$\chi_C$ is never zero is exactly $|C/C'|$?  This would follow if one
could show that $\chi_C$ is never zero if and only if the
Glauberman--Isaacs correspondent $\chi^*$ of $\chi$ is linear.
\end{problem}

The question is motivated by a canonical bijection
$\Irr_A(G)\to\Irr(C)$, $\chi\mapsto\chi^*$, constructed by
G.~Glauberman \cite{glauberman} for solvable $A$ and extended to
arbitrary $A$ by I.~M.~Isaacs \cite{isaacs1973}; see also
\cite[Chapter 2]{navarro-book}.  The count $|C/C'|$ asked for is thus
the number of \emph{linear} characters of $C$, and the proposed
criterion would identify $N_A(G)$ with their preimage under the
correspondence.  Our principal result shows that no such identification
is possible.

\begin{lettertheorem}\label{thm:main}
There exist a finite group $A\cong C_{21}$ and a finite $2$-group
$G$ of order $2^{135}$, with $A$ acting faithfully and coprimely on
$G$, such that
\[
 |N_A(G)|=1728
 \qquad\text{and}\qquad
 |C_G(A)/C_G(A)'|=4096 .
\]
In particular the equality of Problem 21.100 fails.
\end{lettertheorem}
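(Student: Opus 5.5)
The plan is to realise the construction from the abstract explicitly, compute $C$ directly, and then reduce the nonvanishing condition on $C$ to a finite combinatorial count.

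\textbf{The groups.} Put $V=\F_4^2\oplus\F_8$, a $2$-group of order $2^7$. Let $A=\langle a_3\rangle\times\langle a_7\rangle\cong C_{21}$ act on $V$ by
\[
 a_3\colon (x,y)\mapsto(\omega x,y),\qquad a_7\colon (x,y)\mapsto(x,\theta y),
\]
where $\omega\in\F_4^\times$ has order $3$ and $\theta\in\F_8^\times$ has order $7$. Let $B=\F_2^V$, the Boolean functions on $V$, of order $2^{128}$. Both $V$ (by translation) and $A$ act on $V$ by permutations. Since $A$ normalizes the translations of $V$, the group $A$ acts on $G=B\rtimes V$, a group of order $2^{135}$. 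This action is faithful and coprime.

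\textbf{The fixed-point subgroup.} By coprime action, $C_G(A)$ maps onto $C_V(A)$, and $C_V(A)=0$. Hence $C=C_B(A)$, which consists of the functions constant on $A$-orbits of $V$. The orbits are:
\begin{itemize}[nosep]
\item $\{(0,0)\}$ and $\{0\}\times\F_8^\times$;
\item five orbits $L\times\{0\}$, one for each line of $\F_4^2$ minus $0$, each of size $3$;
\item five orbits of size $21$ of the form $(L\setminus0)\times\F_8^\times$.
\end{itemize}
That gives $12$ orbits, so $C\cong C_2^{12}$ is abelian and $|C/C'|=4096$. By the Glauberman--Isaacs bijection, $|\Irr_A(G)|=|\Irr(C)|=4096$ as well.

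\textbf{Describing $\Irr_A(G)$ concretely.} Identify $\Irr(B)$ with subsets $S\subseteq V$ via $\lambda_S(b)=(-1)^{\sum_{s\in S}b(s)}$; then $V$ acts by $S\mapsto S+v$. Let $T_S=\{v:S+v=S\}$. Since $B\rtimes T_S$ splits, $\lambda_S$ extends trivially on $T_S$. By Clifford theory,
\[
 \Irr(G)=\{\,(\lambda_S\mu)^G : S\text{ up to translation},\ \mu\in\Irr(T_S)\,\}.
\]
Such a character is $A$-invariant iff the $V$-orbit of $S$ is $A$-stable and $\mu$ matches accordingly. Because $(|A|,|V|)=1$, Glauberman's lemma lets us choose $S$ itself $A$-stable, i.e.\ a union of the $12$ orbits. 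Then $T_S$ is an $A$-stable subspace and $\mu$ must be $A$-invariant.

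For $c\in C\le B$, viewed as a union of $A$-orbits, the formula for induced characters gives
\[
 \chi(c)=\mu(1)\sum_{v\in V/T_S}(-1)^{|c\cap(S+v)|}.
\]
Here $\mu(1)=1$ since $T_S$ is abelian. So $\chi_C$ vanishes somewhere iff, for some $A$-stable $c$, the function $v\mapsto|c\cap(S+v)|\bmod 2$ (the convolution $1_c*1_S$ mod $2$) is balanced on $V/T_S$.

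\textbf{A vanishing example.} Take $S=\{0\}$, so $T_S=0$. Let $c=1_X$ for an $A$-stable $X$ of size $64$, for instance three orbits of size $21$ together with $\{(0,0)\}$. Then $\chi(c)=\sum_v(-1)^{1_X(v)}=0$. This already gives $|N_A(G)|<4096$ and refutes the equality.

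\textbf{The exact count $1728$.} I would organise the count by the $A$-stable subspace $T$. These are built from the $F_4$-subspaces of $\F_4^2$ (namely $0$, the five lines, and the whole space) and from $0$ or $\F_8$, since $\F_8$ is irreducible.

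For each $T$:
\begin{enumerate}[nosep]
\item Count the $A$-stable sets $S$ with $T_S=T$, modulo translation. Glauberman's lemma says each invariant orbit contributes its $A$-fixed members, which form a single $C_{V/T}(A)$-orbit, in practice a singleton since $C_V(A)=0$.
\item Count the $A$-invariant $\mu\in\Irr(T)$.
\item Determine which $S$ admit an $A$-stable $c$ making $1_c*1_S$ balanced modulo $T$.
\end{enumerate}
Summing the surviving cases should give $1728$.

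The main obstacle is this last balancedness test over all pairs $(S,c)$. I would try a Walsh--Fourier expansion over $V/T$, using that both $S$ and $c$ are unions of the explicit orbits, so that the test reduces to a finite table in the $12$ orbit indicators. If a clean closed form resists, I would fall back on exhaustive exact enumeration of the $2^{12}\times2^{12}$ pairs, which is feasible, matching the machine verification mentioned in the abstract.
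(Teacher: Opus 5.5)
Your construction, the computation $C=B^A\cong C_2^{12}$ with $|C/C'|=4096$, and the vanishing example ($S=\{0\}$, $c=1_X$ with $|X|=64$) are correct and coincide with the paper's Theorem~\ref{thm:counterexample}. However, they only prove $|N_A(G)|\le 4095$. The statement asserts $|N_A(G)|=1728$, and for that you give only a plan. Its decisive step is to decide for which $S$ some $A$-stable $c$ balances $1_c*1_S$. You explicitly leave that step open, end with ``summing the surviving cases should give $1728$'', and mention a machine enumeration you do not carry out. So the exact count, which is what the theorem says beyond the inequality, is not established.

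The idea that makes a hand proof possible is that the parity function $h(v)=|c\cap(S+v)|\bmod 2$ is itself constant on $A$-orbits of $V$. This holds because $S$ and $c$ are $A$-stable and $A$ acts linearly. Hence $|T_S|\,\chi(c)=\sum_j\pm n_j$ over the twelve orbit sizes $1,3^{(5)},7,21^{(5)}$. Moreover $\chi(c)=0$ if and only if $h$ is the indicator of an $A$-stable set of size $64$. A subset-sum check shows there are exactly twenty such sets: $\{0\}$ plus three orbits of size $21$, or the complements of these.

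Two consequences simplify your plan. The stabilizer $T_S$ plays no role, since one sums over all of $V$. And because $C_V(A)=0$, the invariant characters correspond bijectively to the $2^{12}$ $A$-stable sets $S$ with $\mu$ trivial, so stratifying by $T$ is unnecessary.

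What remains is linear algebra over $\F_2$. Write $S=(X,Y)$ and $c=(E,F)$ by their radial slices over $w=0$ and $w\neq0$. Then $h$ is given by the explicit correlation of Lemma~\ref{lem:correlation}. With $P=X+Y$ and $Q=E+F$, hitting one of the twenty patterns means $K_P(Q)=\delta+\rho_R$ and $K_P(E)+K_Y(Q)=\delta+\varepsilon J$ for some $|R|=3$ and $\varepsilon\in\F_2$.
\begin{itemize}
\item If $\tau(P)=1$, then $K_P$ is multiplication by a unit of $\F_2[\F_4^2]$, so every such character vanishes somewhere on $C$.
\item If $\tau(P)=0$, the image of $K_P$ lies in $\langle J,(0,r)\rangle$ with $r=p+p_0\one$. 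Vanishing then reduces to the conditions $\wt(r)=2$, $\tau(Y)=1$ and $r\cdot y=1+p_0$.
\end{itemize}
Counting the pairs $(P,Y)$ that avoid both cases gives $2\cdot64+10\cdot64+20\cdot48=1728$.
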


The group is $G=B\rtimes V$, where $V=\F_4^2\oplus\F_8$ is an
$\F_2$-space of dimension $7$, the group $B$ is the additive group of
all Boolean functions on $V$, and $V$ acts on $B$ by translation; the
generator of $A$ acts on $V$ as multiplication by a scalar of order
$21$.  Section~\ref{sec:family} sets this up in the generality we need
and Section~\ref{sec:design} explains why $V$ has to look like this.

In this example $C\cong C_2^{12}$ is abelian, so \emph{every}
$\chi^*\in\Irr(C)$ is linear.  The criterion in the second sentence of
Problem 21.100 therefore predicts that no invariant character vanishes
anywhere on $C$, and it fails as badly as it can.  That criterion is
also posed on its own by Navarro as Problem 6.3 of
\cite{navarro-problems}:

\begin{problem63}\label{prob:63}
Suppose that $A$ acts coprimely on $G$, let $\Irr_A(G)$ be the set of
$A$-invariant irreducible characters of $G$, let $C=C_G(A)$, and let
${}^*\colon\Irr_A(G)\to\Irr(C)$ be the Glauberman--Isaacs
correspondence.  Let $\chi\in\Irr_A(G)$.  Is it true that
$\chi^*(1)=1$ if and only if $\chi_C$ is never zero?
\end{problem63}

\begin{lettercorollary}\label{cor:criterion}
Problem 6.3 has a negative answer.  For the action of
Theorem~\ref{thm:main} there are $2368$ characters $\chi\in\Irr_A(G)$
with $\chi^*(1)=1$ for which $\chi_C$ nevertheless has a zero.
\end{lettercorollary}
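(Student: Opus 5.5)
The corollary should follow from Theorem~\ref{thm:main} by a short counting argument once we know the fixed-point subgroup is abelian. We take the action of Theorem~\ref{thm:main}, with $A\cong C_{21}$ acting on $G=B\rtimes V$, and put $C=C_G(A)$.

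The first step is to record that $C\cong C_2^{12}$ is abelian. The paper asserts this in the discussion after Theorem~\ref{thm:main}, and it will be proved in Section~\ref{sec:family}. Consequently $C'=1$ and $|C/C'|=|C|=4096$, and every $\psi\in\Irr(C)$ is linear.

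The second step uses the Glauberman--Isaacs correspondence ${}^*\colon\Irr_A(G)\to\Irr(C)$, which is a bijection. Since $|\Irr(C)|=|C|=4096$, we get $|\Irr_A(G)|=4096$. Moreover $\chi^*(1)=1$ holds for every $\chi\in\Irr_A(G)$, because all characters of $C$ are linear. We do not need to compute the correspondence explicitly; only its bijectivity and the fact that $C$ is abelian are used.

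The third step counts the characters in question. We need those $\chi\in\Irr_A(G)$ with $\chi^*(1)=1$ such that $\chi_C$ has a zero. By the second step this set is exactly $\Irr_A(G)\setminus N_A(G)$. By Theorem~\ref{thm:main}, $|N_A(G)|=1728$, so the set has $4096-1728=2368$ elements. Each of these $2368$ characters satisfies $\chi^*(1)=1$ while $\chi_C$ vanishes somewhere on $C$, so the ``only if'' direction of Problem~6.3 fails, and the answer to Problem~6.3 is negative.

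All of the real work is contained in Theorem~\ref{thm:main}: identifying $C$ and showing that exactly $1728$ of the invariant characters are nowhere zero on $C$. The only fact used here beyond the theorem as stated is that $C$ is abelian of order $2^{12}$. The one point to be careful about is that this is the same $C$, and that its order $4096$ agrees with $|C/C'|$ in Theorem~\ref{thm:main}. This is consistent precisely because $C'=1$.
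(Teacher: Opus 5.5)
Your proposal is correct and follows the paper's proof: since $C\cong C_2^{12}$ is abelian, every $\chi^*$ is linear, so the characters in question are exactly $\Irr_A(G)\setminus N_A(G)$, and there are $4096-1728=2368$ of them. The only difference is where $|\Irr_A(G)|=4096$ comes from: you use the bijectivity of the Glauberman--Isaacs correspondence, whereas the paper cites its direct computation in Proposition~\ref{prop:invariant-irr} (recorded in Theorem~\ref{thm:counterexample}), and either works.
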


Navarro writes of Problem 6.3 that he does ``not even know how to prove
the case where $G$ is nilpotent, and this should be territory for
counterexamples'' \cite[p.~189]{navarro-problems}.  Our $G$ is a
$2$-group, so the counterexample lies in exactly that territory.
Section~\ref{sec:carter} passes to $\Gamma=G\rtimes A$ and settles
the Carter subgroup analogue as well, and Section~\ref{sec:navarro}
records precisely which of the surrounding problems of
\cite{navarro-problems} the example does and does not settle.

\begin{lettercorollary}\label{cor:carter-intro}
Let $\Gamma=G\rtimes A$ for the action of Theorem~\ref{thm:main}.  Then
$\Gamma$ is solvable of order $2^{135}\cdot21$, the subgroup
$K=C_G(A)\times A$ is a Carter subgroup of $\Gamma$, and exactly
$36288$ irreducible characters of $\Gamma$ do not vanish on $K$, while
$|K/K'|=86016$.  Consequently Problem 6.7 of \cite{navarro-problems} has
a negative answer, and the head characters of $\Gamma$ in the sense of
\cite{isaacs-carter} are not its Carter-nonvanishing characters.
\end{lettercorollary}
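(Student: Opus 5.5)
The first three assertions are formal. $\Gamma=G\rtimes A$ is solvable because $G$ is a $2$-group and $A\cong C_{21}$, and $|\Gamma|=2^{135}\cdot 21$ by Theorem~\ref{thm:main}. Next, $C=C_G(A)$ centralizes $A$, so $K=C\times A$ is nilpotent. For self-normalization, $A$ is the unique Hall $2'$-subgroup of $K$, so $N_\Gamma(K)\le N_\Gamma(A)=N_G(A)A$. Coprimality gives $[N_G(A),A]\le G\cap A=1$, hence $N_G(A)=C$ and $N_\Gamma(K)=K$. Finally $C\cong C_2^{12}$ and $A$ are abelian, so $|K/K'|=|C|\,|A|=4096\cdot 21=86016$.

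For the character count I would split $\Irr(\Gamma)$ by Clifford theory over the normal subgroup $G$. If $\chi\in\Irr(G)$ is not $A$-invariant, its stabilizer is $GA_0$ with $A_0<A$ proper, and every constituent of $\chi^\Gamma$ is induced from the normal subgroup $GA_0$. Take $a$ a generator of $A$. Its image in $\Gamma/G\cong A$ lies outside $A_0$, so $a$ lies in no conjugate of $GA_0$, and every such induced character vanishes at $a\in K$. If $\chi\in\Irr_A(G)$, then coprimality and Gallagher's theorem show that the characters of $\Gamma$ over $\chi$ are exactly $\hat\chi\lambda$ with $\lambda\in\Irr(A)$, where $\hat\chi$ is the canonical extension. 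Since $\lambda$ is linear, $\hat\chi\lambda$ vanishes on $K$ exactly when $\hat\chi$ does. The count is therefore $21$ times the number of $\chi\in\Irr_A(G)$ for which $\hat\chi(ca)\ne0$ for all $c\in C$ and $a\in A$. As $36288=21\cdot1728$, it remains to show that this set is exactly $N_A(G)$.

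One inclusion is immediate: taking $a=1$ gives $\hat\chi(c)=\chi(c)$, so a character in the set lies in $N_A(G)$. The converse is the main obstacle. We must show that $\chi\in N_A(G)$ forces $\hat\chi(ca)\neq0$ for every $a\ne1$, including the elements of order $3$ and $7$, whose centralizers in $G$ are strictly larger than $C$.

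My first approach is a congruence. Write $a=a_3a_7$. For a $p$-element $y$ commuting with $x$, $\hat\chi(xy)\equiv\hat\chi(x)$ modulo a prime $\mathfrak p$ above $p$. Applying this twice gives $\hat\chi(ca)\equiv\chi(c)$ modulo primes above $3$ and $7$, so it suffices that $\chi(c)$ is prime to $21$ whenever it is nonzero. Here I would use the explicit description of $\Irr_A(G)$ from Section~\ref{sec:family}. Since $G=B\rtimes V$ with $B$ elementary abelian, each $\chi$ is induced from an extension of a character of $B$ to its inertia group in $G$. The value at an involution $c\in C$ is then a signed sum over the orbit, and I expect it to be $0$ or $\pm$ a power of $2$. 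If that explicit evaluation fails, the fallback is Isaacs' formula $\hat\chi(ca)=\pm\chi^{*_{\langle a\rangle}}(c)$, where $\chi^{*_{\langle a\rangle}}$ is the Glauberman correspondent for $\langle a\rangle$. Its value would be computed directly on $C_G(a)$, again from the explicit model.

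Given the count, the last claim follows from Isaacs' theorem \cite{isaacs-carter} that the head characters of $\Gamma$ number $|K/K'|=86016$. Since $36288\ne86016$, the head characters cannot coincide with the Carter-nonvanishing ones, and Problem 6.7 of \cite{navarro-problems} has a negative answer.
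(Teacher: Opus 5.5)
\textbf{There is a genuine gap}, at exactly the step you flag as the main obstacle, and the congruence route cannot close it. The rest of your reduction is sound and parallels the paper: the Carter-subgroup argument is Proposition~\ref{prop:carter}, non-invariant $\theta$ are eliminated by evaluating at a generator of $A$, and Gallagher's theorem reduces the count to $21$ times the number of $\chi\in\Irr_A(G)$ whose extension has no zero on $K$.

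The congruence route fails for two reasons. First, for $a$ of order $21$ the two congruences do not compose. Since $(ca)_{3'}=ca_7$ and $(ca)_{7'}=ca_3$, you only get $\hat\chi(ca)\equiv\hat\chi(ca_7)\pmod{\mathfrak p_3}$ and $\hat\chi(ca)\equiv\hat\chi(ca_3)\pmod{\mathfrak p_7}$. Relating either right-hand side to $\chi(c)$ needs a congruence modulo a prime over the \emph{other} rational prime, so nothing ties $\hat\chi(ca)$ to $\chi(c)$.

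Second, even for $a$ of prime order, your sufficient condition is false: values of characters in $N_A(G)$ on $C$ need not be prime to $21$. Take $u$ to be the indicator of $\{0\}\times W$, that is, $X=Y=\delta$ in Lemma~\ref{lem:two-slice}. Then $V_u=W$, and Lemma~\ref{lem:value-formula} gives $\chi_u(c)=\sum_{s\in U}(-1)^{(E+F)(s)}\equiv\pm1\pmod 3$, so $\chi_u\in N_A(G)$. At $c$ the indicator of $(U\setminus\{0\})\times\{0\}$ this value is $1-15=-14$. For $a$ of order $7$ the congruence then only says $\hat\chi(ca)\equiv0\pmod{\mathfrak p_7}$, which decides nothing.

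Your fallback via the Glauberman correspondent of $\langle a\rangle$ is only a plan. Computing that correspondent on $C$ and proving it nonvanishing is the entire content of the step.

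What is missing is an actual evaluation of $\hat\chi(ca)$. The paper obtains it by inducing from the inertia group $B\rtimes(V_u\rtimes A)$ with transversal $(0,(t,1))$. If $a$ acts as $T^k$, then
\[
\chi_{u,\psi}(ca)=\psi(a)\sum_{t\in S_k/V_u}(-1)^{\langle\tau_tu,c\rangle},\qquad S_k=\{t:(T^k+1)t\in V_u\}
\]
(Lemma~\ref{lem:gamma-values}). Up to the factor $|V_u|$, this is a signed sum of the $A$-orbit sizes in the $T$-invariant subspace $S_k$.

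A case analysis on $\gcd(k,21)$ gives $S_k\in\{V_u,V,U,W,L\oplus W\}$. The cases are as follows.
\begin{itemize}
\item $S_k=V_u$ gives the value $\pm1$.
\item $S_k=V$ gives $\chi_u(c)$, which is nonzero precisely because $\chi_u\in N_A(G)$. This is where the hypothesis enters: in the example above, $a$ of order $7$ has $S_k=V$ and the value is $-14\,\psi(a)$.
\item $S_k\in\{U,W,L\oplus W\}$: the orbit-size multisets $\{1,3^{(5)}\}$, $\{1,7\}$ and $\{1,3,7,21\}$ admit no balanced split (Lemma~\ref{lem:k-nonzero}).
\end{itemize}
With that computation in place, your count $21\cdot1728=36288$ and the head-character conclusion go through.
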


Nor is the failure a near miss: passing to direct powers drives the
ratio to zero.

\begin{lettercorollary}\label{cor:powers}
For every $\varepsilon>0$ there is a coprime action of a finite group
$A$ on a finite group $G$ with $C=C_G(A)$ abelian and
\[
 \frac{|N_A(G)|}{|C/C'|}<\varepsilon .
\]
\end{lettercorollary}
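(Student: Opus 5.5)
The plan is to take direct powers of the action of Theorem~\ref{thm:main}. Let $A$ and $G$ be as there, and for $n\ge1$ let $A$ act diagonally on $G^n=G\times\cdots\times G$. The action is still coprime, since $|G^n|$ is a power of $2$ and $|A|=21$. The fixed-point subgroup is $C_{G^n}(A)=C^n$ with $C=C_G(A)\cong C_2^{12}$, so it is abelian and $|C^n/(C^n)'|=4096^n$.

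Next I will describe $\Irr_A(G^n)$. Every irreducible character of $G^n$ is an outer tensor product $\chi_1\times\cdots\times\chi_n$ with $\chi_i\in\Irr(G)$. Because $A$ acts diagonally, $(\chi_1\times\cdots\times\chi_n)^a=\chi_1^a\times\cdots\times\chi_n^a$. Irreducible characters of a direct product determine their factors, so the product is $A$-invariant exactly when each $\chi_i$ is $A$-invariant.

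For $(c_1,\dots,c_n)\in C^n$ the value of the product is $\prod_i\chi_i(c_i)$. It is nowhere zero on $C^n$ exactly when each $\chi_i$ is nowhere zero on $C$. Indeed, if some $\chi_j(c_j)=0$, take $c_i=1$ for $i\ne j$; each $\chi_i(1)\neq0$, so the value at this element is $0$. Hence
\[
N_A(G^n)=\{\chi_1\times\cdots\times\chi_n:\chi_i\in N_A(G)\},
\]
and Theorem~\ref{thm:main} gives $|N_A(G^n)|=1728^n$.

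Therefore
\[
\frac{|N_A(G^n)|}{|C^n/(C^n)'|}=\left(\frac{1728}{4096}\right)^n=\left(\frac{27}{64}\right)^n,
\]
which tends to $0$. Choosing $n$ with $(27/64)^n<\varepsilon$ proves the corollary. None of these steps is a real obstacle: the only points that need care are the factorwise invariance and the factorwise nonvanishing, and both are immediate from the structure of characters of direct products.
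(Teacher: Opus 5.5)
Your proposal is correct and is essentially the paper's own proof: diagonal action of $A$ on $G^n$, $C_{G^n}(A)=C^n$ abelian, factorwise invariance and nonvanishing of outer tensor products, giving the ratio $(27/64)^n\to0$. Your choice $c_i=1$ for $i\neq j$ just spells out the one direction of the nonvanishing equivalence that the paper leaves implicit.
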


\subsection*{Strategy of the proof}

To prove Theorem~\ref{thm:main} we isolate the counterexample, which is
short, in Section~\ref{sec:short}, where every step can be checked by
hand.  Three facts do all the work, for $A$ of odd order acting without
nonzero fixed points on $V$ and $G=B\rtimes V$ as above.

First, $C=C_G(A)=B^A$ is elementary abelian of rank the number of
$A$-orbits on $V$ (Proposition~\ref{prop:fixed-subgroup}); in particular
$C'=1$ and $|C/C'|=|B^A|$.  Second, the little-groups method identifies
$\Irr_A(G)$ with $B^A$ (Proposition~\ref{prop:invariant-irr}), so that
$|\Irr_A(G)|=|C/C'|$ \emph{exactly}; hence a single invariant character
with a zero on $C$ already refutes Problem 21.100.  Third, if
$u=\delta_0\in B^A$ is the indicator of $0\in V$, the associated
$\chi_u\in\Irr_A(G)$ has degree $|V|$ and satisfies
$\chi_u(c)=|V|-2|\supp c|$ for $c\in B^A$
(Lemma~\ref{lem:delta-values}), so $\chi_u$ vanishes as soon as $V$ has
an $A$-stable subset of size $|V|/2$.

Everything therefore reduces to an arithmetic question about the
multiset of $A$-orbit sizes on $V$: does some sub-multiset sum to
$|V|/2$?  Section~\ref{sec:design} answers it completely in small
dimension.  No group of prime power order admits such a $V$
(Proposition~\ref{prop:prime-fails}); no group of odd order at all,
of any structure, admits one of dimension less than $7$
(Theorem~\ref{thm:dim-six}); and the smallest admissible cyclic pair is
$|A|=21$ with $\dim V=7$, where the orbit sizes are forced to be
\[
 1,3,3,3,3,3,7,21,21,21,21,21
 \qquad\text{and}\qquad
 1+21+21+21=64=\tfrac12|V|
\]
(Proposition~\ref{prop:minimal}).  That identity is the whole example.

Sections~\ref{sec:radial}--\ref{sec:count} then refine the inequality
$|N_A(G)|<4096$ to the exact value $1728$.  This part is longer: we
coordinatize $B^A$ by pairs of ``radial'' six-bit vectors, compute the
relevant correlation explicitly (Lemma~\ref{lem:correlation}), and
convert vanishing into two affine conditions over $\F_2$
(Proposition~\ref{prop:zero-criterion}).  A reader who wants only the
counterexample may stop at Section~\ref{sec:short}.
Section~\ref{sec:consequences} proves
Corollaries~\ref{cor:criterion} and~\ref{cor:powers} and records what
we do not know; Section~\ref{sec:carter} proves
Corollary~\ref{cor:carter-intro}; Section~\ref{sec:navarro} delimits
the scope of the refutation among the problems of
\cite{navarro-problems}; and Section~\ref{sec:verification} describes
machine checks, which are independent of the proofs.

\subsection*{Notation}

Groups are finite and characters are complex.  We write $C_n$ for the
cyclic group of order $n$, $\Irr(G)$ for the irreducible characters,
$G'$ for the derived subgroup, and $\Ind_H^G$ for induction.  For an
$A$-set $S$ we write $S^A$ for the fixed points.  Vectors over $\F_2$
are added coordinatewise, $\wt(\cdot)$ is Hamming weight, $\supp(\cdot)$
is support, and $\one$ denotes an all-ones vector whose length is clear
from context.

\section{A family of constructions}\label{sec:family}

Throughout this section $A$ is a finite group of odd order, and $V$ is a
finite $\F_2$-vector space on which $A$ acts $\F_2$-linearly.  We assume
throughout that the action is fixed-point-free,
\begin{equation}\label{eq:no-fixed-V}
 C_V(A)=0 .
\end{equation}
Since $|A|$ is odd, $|A|=1$ in $\F_2$, so $\F_2[A]$ is semisimple by
Maschke's theorem; we use this twice below.  For $a\in A$ we write
$a\cdot v$ for the action on $V$.
Let
\[
 B=\{f:V\to\F_2\}
\]
be the additive group of Boolean functions on $V$, so
$B\cong C_2^{|V|}$.  For $t\in V$ let $\tau_t\in\operatorname{Aut}(B)$
be translation,
\[
 (\tau_tf)(x)=f(x+t),
\]
and form the semidirect product
\[
 G=B\rtimes V,
 \qquad
 (f,t)(g,s)=(f+\tau_tg,\,t+s),
\]
of order $2^{|V|}\cdot|V|$.  Since $|A|$ is odd, $A$ acts coprimely
on $G$ once we check that it acts at all:

\begin{lemma}\label{lem:action}
For $a\in A$ the formula $a\cdot(f,t)=(f\circ a^{-1},a\cdot t)$ defines
an action of $A$ on $G$ by automorphisms, restricting to the given
action on $V$.  It is faithful if the action on $V$ is.
\end{lemma}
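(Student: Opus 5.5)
We verify directly that the formula gives an action of $A$ by group automorphisms; the only real content is checking compatibility with the twisted multiplication. Write $a\cdot f=f\circ a^{-1}$ for $f\in B$.

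\textbf{Step 1: an action on $B$.} The map $f\mapsto f\circ a^{-1}$ is additive, hence an automorphism of $B$. We have $1\cdot f=f$ and
\[
 a\cdot(b\cdot f)=f\circ b^{-1}\circ a^{-1}=f\circ(ab)^{-1}=(ab)\cdot f .
\]
Together with the given linear action on $V$, this gives an action of $A$ on the set $B\times V$.

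\textbf{Step 2: the key identity.} This is the step I expect to carry the argument: for $f\in B$, $t\in V$ and $a\in A$,
\[
 a\cdot(\tau_t g)=\tau_{a\cdot t}(a\cdot g).
\]
Evaluate both sides at $x$. The left side is $g(a^{-1}x+t)$. The right side is $(g\circ a^{-1})(x+a t)=g(a^{-1}x+t)$, using linearity of $a^{-1}$. So the two sides agree.

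\textbf{Step 3: the action respects multiplication.} For $a\in A$,
\[
 a\cdot\bigl((f,t)(g,s)\bigr)=\bigl(a\cdot f+a\cdot\tau_t g,\;a t+a s\bigr),
\]
which by Step 2 equals
\[
 \bigl(a\cdot f+\tau_{a t}(a\cdot g),\;a t+a s\bigr)=\bigl(a\cdot(f,t)\bigr)\bigl(a\cdot(g,s)\bigr).
\]
Each map $(f,t)\mapsto a\cdot(f,t)$ is bijective, with inverse given by $a^{-1}$. Hence it is an automorphism of $G$, and $a\mapsto(\text{this automorphism})$ is a homomorphism $A\to\operatorname{Aut}(G)$ by Step 1.

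\textbf{Step 4: restriction and faithfulness.} The subgroup $V=\{(0,t)\}$ is mapped by $a$ to $(0,a t)$, since $0\circ a^{-1}=0$. So the action restricts to the given action on $V$. If $a$ acts trivially on $G$, then in particular it acts trivially on $V$. So when the action on $V$ is faithful, $a=1$, and the action on $G$ is faithful.

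Coprimality is automatic: $|G|$ is a power of $2$ and $|A|$ is odd.
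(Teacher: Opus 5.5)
Your proof is correct and follows the paper's argument: the key identity $(\tau_t g)\circ a^{-1}=\tau_{a\cdot t}(g\circ a^{-1})$, then the homomorphism check, bijectivity via $a^{-1}$, and restriction to $\{0\}\times V$. You also spell out the action axioms and the faithfulness deduction, which the paper leaves implicit; in Step 2 you wrote $f\in B$ where you meant $g\in B$.
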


\begin{proof}
For $g\in B$ and $x\in V$ we have
\[
 \bigl((\tau_tg)\circ a^{-1}\bigr)(x)=g(a^{-1}x+t)
 =\bigl(\tau_{a\cdot t}(g\circ a^{-1})\bigr)(x),
\]
so $(\tau_tg)\circ a^{-1}=\tau_{a\cdot t}(g\circ a^{-1})$.  Hence
\[
 a\cdot\bigl((f,t)(g,s)\bigr)
 =\bigl((f+\tau_tg)\circ a^{-1},a\cdot(t+s)\bigr)
 =\bigl(a\cdot(f,t)\bigr)\bigl(a\cdot(g,s)\bigr).
\]
The map is bijective with inverse given by $a^{-1}$, and its
restriction to $\{0\}\times V$ is the given action.
\end{proof}

\begin{proposition}\label{prop:fixed-subgroup}
Let $k$ be the number of $A$-orbits on $V$.  Then
\[
 C=C_G(A)=B^A\cong C_2^{\,k},
\]
so $C$ is elementary abelian, $C'=1$, and $|C/C'|=2^k$.
\end{proposition}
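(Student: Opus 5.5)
We compute the fixed points of $A$ on $G=B\rtimes V$ directly from the formula of Lemma~\ref{lem:action}. An element $(f,t)\in G$ is fixed by $a\in A$ exactly when $f\circ a^{-1}=f$ and $a\cdot t=t$. Requiring this for all $a\in A$, the second coordinate must lie in $C_V(A)$, which is $0$ by the standing hypothesis \eqref{eq:no-fixed-V}. The first coordinate must satisfy $f(a^{-1}x)=f(x)$ for all $a$ and $x$, that is, $f$ is constant on $A$-orbits of $V$. Hence
\[
 C_G(A)=\{(f,0): f\in B^A\}=B^A\times\{0\},
\]
which we identify with $B^A$.

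Next we identify the group structure. Inside $G$ the subgroup $\{(f,0)\}$ multiplies as $(f,0)(g,0)=(f+\tau_0 g,0)=(f+g,0)$. So $C$ is a subgroup of the elementary abelian $2$-group $B$, hence itself elementary abelian. Its order is obtained by choosing freely one value in $\F_2$ on each of the $k$ orbits: the indicator functions $\one_{O}$ of the orbits $O$ form an $\F_2$-basis of $B^A$. Therefore $C\cong C_2^{\,k}$, $C'=1$, and $|C/C'|=2^k$.

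There is no real obstacle here. The only point needing care is that the fixed-point condition on the $V$-coordinate is exactly where \eqref{eq:no-fixed-V} enters. Without it, $C$ would be $B^A\rtimes C_V(A)$, which need not be abelian. We also note that coprimality plays no role in this particular computation beyond the setup.
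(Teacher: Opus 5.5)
Your proof is correct and is essentially the paper's argument: the $V$-coordinate of a fixed element is forced to be $0$ by \eqref{eq:no-fixed-V}, and $B^A$ is the space of functions constant on the $A$-orbits, with the $k$ orbit indicators as an $\F_2$-basis. Your side remark that dropping \eqref{eq:no-fixed-V} gives $C=B^A\rtimes C_V(A)$, possibly nonabelian, is also correct.
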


\begin{proof}
If $(f,t)$ is fixed by every $a\in A$ then $a\cdot t=t$ for all $a$, so
$t=0$ by~\eqref{eq:no-fixed-V}; thus $C_G(A)=B^A$.  A function $f\in B$
satisfies $f\circ a^{-1}=f$ for all $a$ exactly when it is constant on each
$A$-orbit of $V$, and the $k$ orbit indicators form an $\F_2$-basis of
$B^A$.  Being elementary abelian, $C$ has $C'=1$.
\end{proof}

\subsection{The irreducible characters of \texorpdfstring{$G$}{G}}

For $u,f\in B$ put
\[
 \langle u,f\rangle=\sum_{x\in V}u(x)f(x)\in\F_2,
 \qquad
 \lambda_u(f)=(-1)^{\langle u,f\rangle} .
\]
The pairing is nondegenerate, so $u\mapsto\lambda_u$ is an isomorphism
$B\to\Irr(B)$.  Conjugation by $(0,t)$ sends $(f,0)$ to $(\tau_tf,0)$,
and since $\langle u,\tau_tf\rangle=\langle\tau_tu,f\rangle$ we get
$\lambda_u^{(0,t)}=\lambda_{\tau_tu}$.  Thus $V$ permutes $\Irr(B)$ the
same way it permutes $B$, and the stabilizer of $\lambda_u$ is
$B\rtimes V_u$ where
\[
 V_u=\{t\in V:\tau_tu=u\} .
\]
Write $I_u=B\rtimes V_u$; it is normal in $G$, being the preimage of
$V_u\le V$ under $G\to G/B\cong V$.  For $\mu\in\Irr(V_u)$ the formula
\[
 \theta_{u,\mu}(f,t)=\lambda_u(f)\mu(t)
\]
defines a linear character of $I_u$, because $\tau_tu=u$ for $t\in
V_u$.  Put $\chi_{u,\mu}=\Ind_{I_u}^G\theta_{u,\mu}$.

\begin{proposition}\label{prop:all-irr}
Choose one $u$ from each $V$-orbit of $B$.  Then
\[
 \Irr(G)=\{\chi_{u,\mu}:u\text{ a chosen representative},\
 \mu\in\Irr(V_u)\},
\]
these characters are pairwise distinct, and $\chi_{u,\mu}$ has degree
$[V:V_u]$.
\end{proposition}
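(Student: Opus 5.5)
This is the standard little-groups (Wigner--Mackey) argument for a semidirect product with abelian normal subgroup $B$ and abelian complement $V$; see \cite[Theorem 6.11 and Problem 6.18]{isaacs-book}. We verify its three ingredients for $G=B\rtimes V$ and then count to confirm that the list is complete.

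\emph{Step 1: each $\chi_{u,\mu}$ is irreducible of degree $[V:V_u]$.} The inertia group of $\lambda_u$ in $G$ is $I_u=B\rtimes V_u$. Indeed, $B$ is abelian and so fixes $\lambda_u$, and $(0,t)$ sends $\lambda_u$ to $\lambda_{\tau_tu}$, which equals $\lambda_u$ exactly when $t\in V_u$. The character $\theta_{u,\mu}$ is linear and lies over $\lambda_u$, since $(\theta_{u,\mu})_B=\lambda_u$. The Clifford correspondence then shows that $\Ind_{I_u}^G\theta_{u,\mu}$ is irreducible, with degree $[G:I_u]=[V:V_u]$.

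\emph{Step 2: distinctness.} Suppose $\chi_{u,\mu}=\chi_{u',\mu'}$ with $u,u'$ chosen representatives. Restricting to $B$ gives the sum of the $V$-orbit of $\lambda_u$. The irreducible constituents of this restriction therefore determine the orbit, so $u=u'$. For fixed $u$, the Clifford correspondence is a bijection between $\Irr(I_u\mid\lambda_u)$ and $\Irr(G\mid\lambda_u)$, so it suffices to see that the $\theta_{u,\mu}$ are distinct for distinct $\mu$. This is clear, because $\theta_{u,\mu}(0,t)=\mu(t)$.

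\emph{Step 3: exhaustion.} Every $\chi\in\Irr(G)$ lies over some $\lambda_u$, and after conjugating we may take $u$ to be a chosen representative. By Clifford theory, $\chi$ is induced from some $\psi\in\Irr(I_u\mid\lambda_u)$. Because $\lambda_u$ extends to the linear character $\theta_{u,1}$ of $I_u$, Gallagher's theorem gives $\psi=\theta_{u,1}\cdot\mu$ for some $\mu\in\Irr(I_u/B)=\Irr(V_u)$, and this product is exactly $\theta_{u,\mu}$.

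As a consistency check, we can avoid Gallagher by counting. The sum over representatives $u$ and $\mu\in\Irr(V_u)$ of $[V:V_u]^2$ equals the sum over representatives of $[V:V_u]\cdot|V|$. This is $|V|$ times the sum of the orbit sizes, which is $|V|\cdot|B|=|G|$. Since the characters are irreducible and pairwise distinct, this sum of squared degrees forces the list to be all of $\Irr(G)$. I expect no real obstacle here; the only point that needs care is the inertia-group identification in Step 1, which rests on the nondegeneracy of the pairing already noted before the statement.
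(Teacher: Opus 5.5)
Your proof is correct and is essentially the paper's little-groups argument. The paper uses the same inertia groups $I_u$, separates different orbits by their $B$-constituents, and proves completeness by exactly your count $|V_u|[V:V_u]^2=|V|\,|\mathcal O|$; the only differences are that it checks irreducibility and distinctness for fixed $u$ by computing $\langle\chi_{u,\mu},\chi_{u,\nu}\rangle_G$ directly with Mackey's formula instead of citing the Clifford correspondence, and it relies on the count alone rather than on Gallagher's theorem for exhaustion.
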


\begin{proof}
If $g=(g_0,s)\notin I_u$ then $s\notin V_u$, so
$\theta_{u,\mu}^g$ and $\theta_{u,\nu}$ restrict to the distinct
characters $\lambda_{\tau_su}\neq\lambda_u$ of $B$.  Since $I_u\trianglelefteq
G$, Mackey's formula reads
\[
 \langle\chi_{u,\mu},\chi_{u,\nu}\rangle_G
 =\sum_{gI_u\in G/I_u}
 \langle\theta_{u,\mu},\theta_{u,\nu}^{\,g}\rangle_{I_u},
\]
and by the previous sentence only the identity coset contributes.  It
contributes $1$ if $\mu=\nu$ and $0$ otherwise.  So each
$\chi_{u,\mu}$ is irreducible, and for fixed $u$ distinct $\mu$ give
distinct characters.  Representatives from different $V$-orbits give
characters with disjoint sets of $B$-constituents, hence distinct
characters.

For the orbit $\mathcal O=V\cdot u$ there are $|V_u|$ choices of $\mu$,
each of degree $[V:V_u]$, contributing
\[
 |V_u|\,[V:V_u]^2=|V|\,|\mathcal O|
\]
to the sum of squared degrees.  Summing over the orbits of $V$ on $B$
gives $|V|\,|B|=|G|$, so the list is complete.
\end{proof}

\begin{lemma}\label{lem:fixed-representative}
Every $A$-stable $V$-orbit in $B$ contains exactly one element of
$B^A$.
\end{lemma}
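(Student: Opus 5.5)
This is a standard Glauberman-type counting argument for the semidirect product $V\rtimes A$ acting on $B$. The plan is to prove existence by a coprime conjugacy argument and uniqueness by the fixed-point-freeness~\eqref{eq:no-fixed-V}.

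\emph{Existence.} Let $\mathcal O$ be a $V$-orbit in $B$ that is stable under $A$. The actions combine into an action of $H=V\rtimes A$ on $B$, since $a\circ\tau_t\circ a^{-1}=\tau_{a\cdot t}$ by the computation in Lemma~\ref{lem:action}. Because $\mathcal O$ is $A$-stable and $V$-stable, $H$ acts transitively on $\mathcal O$, with $V$ already transitive. Fix $u\in\mathcal O$ and let $H_u$ be its stabilizer. Transitivity of $V$ gives $H=VH_u$, so $H_u$ maps onto $H/V\cong A$. Hence $H_u$ contains a subgroup $A_1$ with $A_1\cap V=1$ and $A_1V=H$, i.e.\ a complement to $V$; concretely, $A_1$ is a complement to the normal $2$-subgroup $H_u\cap V=V_u$ in $H_u$, which exists by Schur--Zassenhaus. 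Now $A_1$ and $A$ are both complements of the normal Hall subgroup $V$ in $H$, so they are conjugate, say $A=A_1^{t}$ with $t\in V$. Then $\tau_t^{\pm1}u\in\mathcal O$ is fixed by $A$; the sign convention is a routine check. This step uses only that $V$ is abelian and coprime to $A$, and conjugacy even holds without solvability hypotheses, since $V$ is a $2$-group.

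\emph{Uniqueness.} Suppose $u,\tau_tu\in B^A$ with $t\in V$. For $a\in A$ we have $a(\tau_tu)=\tau_{a\cdot t}(au)=\tau_{a\cdot t}u$. Equating this with $\tau_tu$ gives $\tau_{a\cdot t-t}u=u$, so $a\cdot t-t\in V_u$ for every $a\in A$. Thus $A$ fixes the coset $t+V_u$ in $V/V_u$. Since $u$ is $A$-fixed, $V_u$ is $A$-stable, and by Maschke (or coprime action on the abelian group $V$) we get $C_{V/V_u}(A)=C_V(A)V_u/V_u=0$ by~\eqref{eq:no-fixed-V}. Hence $t\in V_u$ and $\tau_tu=u$.

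The main obstacle is the existence step, specifically producing the complement and invoking conjugacy of complements cleanly. Alternatively, one can argue by counting: $A$ acts on the set $\mathcal O\cong V/V_u$ as on an affine space, and a coprime group acting on a set of $2$-power size compatibly with a transitive abelian $2$-group has a fixed point, by Glauberman's lemma. I would cite Glauberman's lemma in this form if it is more convenient.
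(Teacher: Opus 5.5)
Your proof is correct. The uniqueness half is the same as the paper's: the coset $t+V_u$ is an $A$-fixed point of $V/V_u$, and this fixed-point set vanishes by Maschke and $C_V(A)=0$.

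Your existence half takes a different route. The paper fixes $u$ in the orbit and writes $a\cdot u=\tau_{z_a}u$. It checks that $V_u$ is $A$-invariant and that $a\mapsto z_a+V_u$ is a $1$-cocycle into $V/V_u$, then trivializes the cocycle by hand. With $w=\sum_{b\in A}z_b$ one gets $z_a\equiv a\cdot w+w \pmod{V_u}$, because $|A|=1$ in $\F_2$, so $\tau_wu\in B^A$.

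You instead let $H=V\rtimes A$ act on the orbit and use $H=VH_u$ to produce a complement $A_1\le H_u$ to $V_u$. You then conjugate $A_1$ onto $A$ by some $t\in V$. This is the standard proof of Glauberman's lemma: both halves of Schur--Zassenhaus (existence in $H_u$, conjugacy in $H$) do the work that the averaging does in the paper. Since $V$ is elementary abelian, $t^{-1}=t$, so your sign check is vacuous.

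The trade-off is this. The paper's version is elementary and self-contained, needing only that $|A|$ is odd. Yours is shorter and more conceptual, avoids the bookkeeping modulo $V_u$, and adapts directly to a nonabelian $V$ of order prime to $|A|$, whereas the averaging uses that $V$ is abelian.

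As you remark, quoting Glauberman's lemma in full would give both halves at once, since it also says that $C_V(A)$ is transitive on the $A$-fixed points of the orbit, and here $C_V(A)=0$. The hypotheses that matter there are coprimality and transitivity of $V$, not that the set has $2$-power size.
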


\begin{proof}
Let the orbit of $u$ be $A$-stable, so for each $a\in A$ there is
$z_a\in V$, determined modulo $V_u$, with $a\cdot u=\tau_{z_a}u$.

First, $V_u$ is $A$-invariant.  If $t\in V_u$ then $\tau_tu=u$, so by
Lemma~\ref{lem:action}
\[
 \tau_{z_a}u=a\cdot u=a\cdot(\tau_tu)=\tau_{a\cdot t}(a\cdot u)
 =\tau_{a\cdot t+z_a}u ,
\]
hence $\tau_{a\cdot t}u=u$ and $a\cdot t\in V_u$.  Thus $V/V_u$ is an
$\F_2[A]$-module, and $(V/V_u)^A=0$: by semisimplicity $V_u$ has an
$A$-invariant complement $X$, so $V/V_u\cong X$ and
$X^A\subseteq V^A=0$.

Next, the map $a\mapsto z_a+V_u$ is a $1$-cocycle: applying
Lemma~\ref{lem:action} twice,
$\tau_{z_{ab}}u=(ab)\cdot u=a\cdot(\tau_{z_b}u)=\tau_{a\cdot z_b+z_a}u$,
so $z_{ab}\equiv a\cdot z_b+z_a \pmod{V_u}$.  Put
\[
 w=\sum_{b\in A}z_b .
\]
Summing the cocycle identity over $b\in A$ and reindexing the left side
gives $w\equiv a\cdot w+|A|z_a\pmod{V_u}$, and $|A|=1$ in $\F_2$ because
$|A|$ is odd, so
\[
 z_a\equiv a\cdot w+w\pmod{V_u}\qquad\text{for every }a\in A .
\]
Therefore $a\cdot w+z_a\equiv w\pmod{V_u}$, and
\[
 a\cdot(\tau_wu)=\tau_{a\cdot w}(a\cdot u)=\tau_{a\cdot w+z_a}u=\tau_wu
\]
for every $a$, so $\tau_wu\in B^A$.  Finally, if $u$ and $\tau_tu$ both
lie in $B^A$ then $t+V_u\in(V/V_u)^A=0$, so $t\in V_u$ and
$\tau_tu=u$.
\end{proof}

\begin{proposition}\label{prop:invariant-irr}
For $u\in B^A$ let $\widetilde\lambda_u(f,t)=\lambda_u(f)$ and
\[
 \chi_u=\Ind_{I_u}^G\widetilde\lambda_u .
\]
Then $u\mapsto\chi_u$ is a bijection from $B^A$ onto $\Irr_A(G)$.  In
particular
\[
 |\Irr_A(G)|=|B^A|=|C/C'| .
\]
\end{proposition}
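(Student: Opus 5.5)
For $u\in B^A$, the character $\chi_u$ is $\chi_{u,1}$ in the notation of Proposition~\ref{prop:all-irr}, taking $u$ itself as the representative of its $V$-orbit; in particular it is irreducible. The plan is to compute how $A$ acts on the parametrization of Proposition~\ref{prop:all-irr}, and then use Lemma~\ref{lem:fixed-representative} to pick a canonical invariant representative in each stable orbit.

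\textbf{Step 1: the action of $A$ on characters.} For $a\in A$ and $\chi\in\Irr(G)$, write $\chi^a(g)=\chi(a\cdot g)$. The automorphism $a$ sends $B$ to $B$. Under $u\mapsto\lambda_u$ it transports $\lambda_u$ to $\lambda_{a^{\pm1}\cdot u}$, since $\langle u,f\circ a^{-1}\rangle=\langle u\circ a,f\rangle$ by reindexing the sum over $V$. Hence the set of $B$-constituents of $\chi_{u,\mu}$, which is $\{\lambda_{\tau_t u}\}$, i.e.\ the $V$-orbit of $u$, is carried to the $V$-orbit of $a^{\pm1}\cdot u$. Consequently, if $\chi$ is $A$-invariant, the $V$-orbit $\mathcal O$ of $B$ attached to $\chi$ by Proposition~\ref{prop:all-irr} is $A$-stable. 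By Lemma~\ref{lem:fixed-representative}, $\mathcal O$ contains a unique $u\in B^A$, and we may use $u$ as the representative, so $\chi=\chi_{u,\mu}$ for some $\mu\in\Irr(V_u)$.

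\textbf{Step 2: invariance forces $\mu=1$.} Since $u\in B^A$, the group $V_u$ is $A$-invariant: $a\cdot\tau_t u=\tau_{a\cdot t}u$. The automorphism $a$ therefore preserves $I_u$, and a direct computation gives $\theta_{u,\mu}\circ a=\theta_{u,\mu\circ a}$ on $I_u$. Hence $\chi_{u,\mu}^a=\chi_{u,\mu^a}$. By the distinctness part of Proposition~\ref{prop:all-irr}, invariance of $\chi_{u,\mu}$ then means that $\mu\in\Irr(V_u)$ is $A$-invariant.

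Now $V_u$ is an $\F_2[A]$-submodule of $V$ with $V_u^A=0$, because $V^A=0$. By coprimality (Glauberman's lemma, or simply Brauer's permutation lemma applied to the cyclic subgroups of $A$), the number of $A$-invariant characters of the abelian group $V_u$ equals $|C_{V_u}(A)|=1$. Concretely, $\Irr(V_u)\cong\operatorname{Hom}(V_u,\F_2)$ is the dual module, whose fixed points vanish by semisimplicity: the dual of a module with no trivial summand has no trivial summand. Hence $\mu=1$ and $\chi=\chi_u$.

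\textbf{Step 3: conversely, each $\chi_u$ is invariant, and $u\mapsto\chi_u$ is injective.} Invariance follows from the same formula: $\chi_u^a=\chi_{a^{\pm1}\cdot u,1}=\chi_u$. Injectivity holds because distinct elements of $B^A$ lie in distinct $V$-orbits, by the uniqueness in Lemma~\ref{lem:fixed-representative}, and so give distinct characters by Proposition~\ref{prop:all-irr}. The count $|\Irr_A(G)|=|B^A|$ then follows, and Proposition~\ref{prop:fixed-subgroup} identifies $|B^A|$ with $|C/C'|$.

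The main obstacle is Step 2: making the induced-character identity $\chi_{u,\mu}^a=\chi_{u,\mu^a}$ precise, and justifying that $V_u$ has no nontrivial $A$-invariant characters. I expect the duality or semisimplicity argument to settle the latter cleanly.
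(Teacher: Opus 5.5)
Your proof is correct and follows the paper's argument essentially step for step. You use $A$-stability of the $V$-orbit together with Lemma~\ref{lem:fixed-representative} to choose the representative $u\in B^A$. The equivariance $\chi_{u,\mu}^{\,a}=\chi_{u,\mu^a}$ then reduces invariance to $\mu^a=\mu$, and $(V_u^*)^A=0$ follows by semisimplicity because $V_u^A=0$.

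The only loose point is your parenthetical appeal to Brauer's permutation lemma, which concerns a single cyclic group. It gives the count of invariant characters only when $A$ itself is cyclic (as in Section~\ref{sec:short}), not for the general odd-order $A$ of Section~\ref{sec:family}. Your concrete duality argument, which is exactly the paper's, makes that remark unnecessary.
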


\begin{proof}
By Proposition~\ref{prop:all-irr} an $A$-invariant irreducible
character comes from an $A$-stable $V$-orbit in $B$, and by
Lemma~\ref{lem:fixed-representative} that orbit has a unique
representative $u\in B^A$; different orbits give different characters.
So it suffices to fix $u\in B^A$ and determine which $\mu$ give
invariant characters.

For such $u$ the subgroup $V_u$ is $A$-invariant, since $\tau_tu=u$
implies $\tau_{a\cdot t}u=\tau_{a\cdot t}(a\cdot u)=a\cdot(\tau_tu)=u$.  The
parameterization of Proposition~\ref{prop:all-irr} is $A$-equivariant,
$\chi_{u,\mu}^{\,a}=\chi_{u,\mu^a}$, and injective, so $\chi_{u,\mu}$
is $A$-invariant if and only if $\mu^a=\mu$ for all $a$.  Write
$\mu(t)=(-1)^{\ell(t)}$ for an $\F_2$-linear functional $\ell$ on
$V_u$; invariance says $\ell$ is $A$-invariant, that is,
$\ell\in(V_u^*)^A$.  Now $V_u^A\subseteq V^A=0$, so by semisimplicity
$V_u$ has no trivial submodule, hence no trivial quotient, hence
$(V_u^*)^A=0$.  Thus $\ell=0$ and $\mu$ is trivial, giving
$\theta_{u,\mu}=\widetilde\lambda_u$.  The final count is
Proposition~\ref{prop:fixed-subgroup}.
\end{proof}

\begin{remark}
When $A$ is solvable, $|\Irr_A(G)|=|\Irr(C)|$ by the Glauberman
correspondence \cite{glauberman}, and indeed $C$ is abelian of order
$|B^A|$.  Proposition~\ref{prop:invariant-irr} recovers this
independently.  What matters below is the sharper statement
$|\Irr_A(G)|=|C/C'|$: the count in Problem 21.100 is the
\emph{total} number of invariant characters here, so any single zero on
$C$ refutes it.
\end{remark}

\subsection{Character values on \texorpdfstring{$C$}{C}}

\begin{lemma}\label{lem:value-formula}
For $u,c\in B^A$,
\[
 \chi_u(c)=\sum_{t\in V/V_u}(-1)^{\langle\tau_tu,c\rangle},
 \qquad\text{equivalently}\qquad
 |V_u|\,\chi_u(c)=\sum_{t\in V}(-1)^{\langle\tau_tu,c\rangle}.
\]
\end{lemma}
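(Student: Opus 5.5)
The plan is to apply the induced-character formula for $\chi_u=\Ind_{I_u}^G\widetilde\lambda_u$, evaluated at elements $c=(c,0)\in B^A\subseteq B\subseteq I_u$. Since $I_u\trianglelefteq G$ and $B\le I_u$, we have
\[
 \chi_u(c)=\sum_{g\in G/I_u}\widetilde\lambda_u\bigl(g^{-1}cg\bigr).
\]
As coset representatives for $G/I_u$ I would take $(0,t)$, with $t$ running over a transversal of $V_u$ in $V$; this is legitimate because $G/I_u\cong V/V_u$.

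Next I compute the conjugate. Conjugation by $(0,t)$ sends $(f,0)$ to $(\tau_tf,0)$, as recorded before Proposition~\ref{prop:all-irr}. Replacing $t$ by $-t=t$ (we are in characteristic $2$), the conjugate of $c$ lies in $B$ and equals $\tau_{\pm t}c=\tau_tc$. Therefore
\[
 \widetilde\lambda_u\bigl(g^{-1}cg\bigr)=(-1)^{\langle u,\tau_tc\rangle}=(-1)^{\langle\tau_tu,c\rangle},
\]
where the last equality uses the adjointness $\langle u,\tau_tf\rangle=\langle\tau_tu,f\rangle$. Strictly, one should note $\langle u,\tau_t c\rangle=\langle\tau_{-t}u,c\rangle$ and $-t=t$. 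This yields the first formula.

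Two further checks are needed:
\begin{itemize}
\item The summand is well defined on cosets $t+V_u$. This holds because $\tau_{t+s}u=\tau_t\tau_su=\tau_tu$ for $s\in V_u$.
\item The second formula follows by summing over all of $V$. Each coset then contributes $|V_u|$ identical terms.
\end{itemize}

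The only step needing care is the sign and inversion convention in the induction formula and in the conjugation action. Since $V$ is elementary abelian of exponent $2$, every such ambiguity disappears, so I expect no real obstacle. Note also that the hypothesis $c\in B^A$ is not used beyond $c\in B$.
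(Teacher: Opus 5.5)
Your proposal is correct and follows the paper's proof essentially step for step. You induce from the normal subgroup $I_u$, take the transversal $(0,t)$ with $t$ running over $V/V_u$, compute the conjugate of $c$ as $(\tau_tc,0)$, apply the adjointness $\langle u,\tau_tc\rangle=\langle\tau_tu,c\rangle$, and use constancy of the summand on cosets of $V_u$ to obtain the second form.
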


\begin{proof}
As $I_u\trianglelefteq G$ and $c\in B\subseteq I_u$, the induced
character formula gives
\[
 \chi_u(c)=\sum_x\widetilde\lambda_u\bigl(x^{-1}(c,0)x\bigr),
\]
the sum over representatives $x$ of $G/I_u$; take $x=(0,t)$ for
$t\in V/V_u$.
Conjugation gives $(0,t)^{-1}(c,0)(0,t)=(\tau_tc,0)$, and
$\langle u,\tau_tc\rangle=\langle\tau_tu,c\rangle$.  The summand is
constant on cosets of $V_u$, whence the second form.
\end{proof}

The next lemma is the reason the whole problem collapses to arithmetic
of orbit sizes.

\begin{lemma}\label{lem:orbit-constant}
For $u,c\in B^A$ the function $t\mapsto\langle\tau_tu,c\rangle$ is
constant on $A$-orbits of $V$.  Consequently, if $n_1,\dots,n_k$ are
the sizes of the $A$-orbits on $V$, there are signs
$\varepsilon_1,\dots,\varepsilon_k\in\{\pm1\}$ with
\begin{equation}\label{eq:orbit-sum}
 |V_u|\,\chi_u(c)=\sum_{j=1}^k\varepsilon_jn_j .
\end{equation}
\end{lemma}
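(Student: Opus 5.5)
The plan is to prove $A$-invariance of $t\mapsto\langle\tau_tu,c\rangle$ directly from the equivariance identity of Lemma~\ref{lem:action}, and then group the sum of Lemma~\ref{lem:value-formula} by orbits.

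\emph{Invariance.} Fix $u,c\in B^A$ and $a\in A$. The computation in the proof of Lemma~\ref{lem:action} gives $(\tau_t g)\circ a^{-1}=\tau_{a\cdot t}(g\circ a^{-1})$. Since $u\circ a^{-1}=u$, this reads
\[
 \tau_{a\cdot t}u=(\tau_tu)\circ a^{-1}.
\]
Next I will show that the pairing is invariant under simultaneous precomposition by $a^{-1}$. Precisely, $\langle f\circ a^{-1},g\circ a^{-1}\rangle=\langle f,g\rangle$, because $x\mapsto a^{-1}x$ is a bijection of $V$ and reindexing the sum $\sum_x f(a^{-1}x)g(a^{-1}x)$ gives the claim. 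Combining this with $c=c\circ a^{-1}$ yields
\[
 \langle\tau_{a\cdot t}u,c\rangle=\langle(\tau_tu)\circ a^{-1},c\circ a^{-1}\rangle=\langle\tau_tu,c\rangle,
\]
so $t\mapsto\langle\tau_tu,c\rangle$ is constant on $A$-orbits of $V$.

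\emph{The sum.} Let $O_1,\dots,O_k$ be the $A$-orbits on $V$, with $|O_j|=n_j$. Choose any $t_j\in O_j$ and put $\varepsilon_j=(-1)^{\langle\tau_{t_j}u,c\rangle}$; by the invariance step this does not depend on the choice of $t_j$. Splitting the second form of Lemma~\ref{lem:value-formula} as $\sum_{t\in V}=\sum_j\sum_{t\in O_j}$ then gives $|V_u|\chi_u(c)=\sum_j\varepsilon_jn_j$, which is \eqref{eq:orbit-sum}.

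I do not expect a real obstacle. The only point needing care is the direction of the action: one must check that $A$-invariance of $u$ and $c$ means invariance under precomposition by $a^{-1}$, which holds since the action on $B$ is $f\mapsto f\circ a^{-1}$. This is exactly the convention fixed in Lemma~\ref{lem:action}, so the identity above applies verbatim.
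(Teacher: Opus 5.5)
Your proposal is correct and follows the paper's proof: you establish $\tau_{a\cdot t}u=(\tau_tu)\circ a^{-1}$ from Lemma~\ref{lem:action} and $u\in B^A$, use $A$-invariance of the pairing together with $c\in B^A$, and then group the second form of Lemma~\ref{lem:value-formula} by $A$-orbits. This is the same argument the paper gives.
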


\begin{proof}
The pairing is $A$-invariant: $\langle a\cdot f,a\cdot g\rangle
=\sum_xf(a^{-1}x)g(a^{-1}x)=\langle f,g\rangle$.  For $u\in B^A$,
Lemma~\ref{lem:action} gives
$a\cdot(\tau_tu)=\tau_{a\cdot t}(a\cdot u)=\tau_{a\cdot t}u$, so
\[
 \langle\tau_{a\cdot t}u,c\rangle
 =\langle a\cdot(\tau_tu),a\cdot c\rangle
 =\langle\tau_tu,c\rangle .
\]
Thus the exponent is constant on $A$-orbits, and grouping the sum of
Lemma~\ref{lem:value-formula} by orbits gives~\eqref{eq:orbit-sum}
with $\varepsilon_j$ the common value of
$(-1)^{\langle\tau_tu,c\rangle}$ on the $j$th orbit.
\end{proof}

\begin{corollary}[Balance condition]\label{cor:balance}
If some $\chi\in\Irr_A(G)$ vanishes at some element of $C$, then some
sub-multiset of $\{n_1,\dots,n_k\}$ sums to $|V|/2$.
\end{corollary}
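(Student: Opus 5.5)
By Proposition~\ref{prop:invariant-irr}, every $\chi\in\Irr_A(G)$ has the form $\chi_u$ for a unique $u\in B^A$. By Proposition~\ref{prop:fixed-subgroup}, every element of $C$ is some $c\in B^A$. So we may assume $\chi_u(c)=0$ for some $u,c\in B^A$. Since $|V_u|\neq 0$, Lemma~\ref{lem:orbit-constant} then gives signs $\varepsilon_j\in\{\pm1\}$ with
\[
 0=|V_u|\,\chi_u(c)=\sum_{j=1}^k\varepsilon_jn_j .
\]

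The rest is bookkeeping. Let $J_+=\{j:\varepsilon_j=+1\}$ and $J_-=\{j:\varepsilon_j=-1\}$. The displayed identity says $\sum_{j\in J_+}n_j=\sum_{j\in J_-}n_j$. Since the $A$-orbits partition $V$, we have $\sum_{j=1}^k n_j=|V|$, so each of these two sums equals $|V|/2$. Thus the sub-multiset $\{n_j:j\in J_+\}$ sums to $|V|/2$, which is the claim.

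Concretely, $J_+$ indexes the orbits on which $\langle\tau_tu,c\rangle=0$, so their union is an $A$-stable subset of $V$ of size $|V|/2$. That is the form used later.

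There is no real obstacle. The only points to confirm are that an arbitrary element of $C$ lies in $B^A$, so that Lemma~\ref{lem:orbit-constant} applies, and that the character is indexed by some $u\in B^A$. Both are exactly what Propositions~\ref{prop:fixed-subgroup} and~\ref{prop:invariant-irr} provide.
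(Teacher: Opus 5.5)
Your proof is correct and matches the paper's: write $\chi=\chi_u$ with $u,c\in B^A$, apply the signed orbit-sum identity of Lemma~\ref{lem:orbit-constant}, split the indices by sign, and use $\sum_j n_j=|V|$. The only differences are cosmetic: you state explicitly the reductions via Propositions~\ref{prop:fixed-subgroup} and~\ref{prop:invariant-irr}, which the paper leaves implicit, and you take the $+1$ indices where the paper takes the $-1$ indices.
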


\begin{proof}
By~\eqref{eq:orbit-sum} a zero forces
$\sum_{j\in S}n_j=\sum_{j\notin S}n_j$ for $S=\{j:\varepsilon_j=-1\}$,
and the two sides add to $\sum_jn_j=|V|$.
\end{proof}

Conversely, one distinguished character makes the balance condition
sufficient.

\begin{lemma}\label{lem:delta-values}
Let $u=\delta_0\in B^A$ be the indicator function of $0\in V$.  Then
$V_u=0$, so $\chi_u=\Ind_B^G\lambda_u\in\Irr_A(G)$ has degree $|V|$,
and for every $c\in B^A$
\[
 \chi_u(c)=|V|-2\,|\supp c| .
\]
Hence $\chi_u(c)=0$ if and only if $|\supp c|=|V|/2$.
\end{lemma}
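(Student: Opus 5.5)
The plan is to verify the three parts of the statement in order: first that $u=\delta_0$ is fixed with trivial stabilizer, then that $\chi_u$ is an invariant character of degree $|V|$, and finally the value formula, which comes straight out of Lemma~\ref{lem:value-formula}.

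First, $\delta_0\in B^A$. Every $a\in A$ acts linearly on $V$, so $a\cdot 0=0$ and $\{0\}$ is an $A$-orbit. Hence $\delta_0\circ a^{-1}=\delta_0$.

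Next, $V_u=0$. We have $\tau_t\delta_0(x)=\delta_0(x+t)$, which is the indicator of $\{t\}$ (recall $-t=t$ over $\F_2$). This equals $\delta_0$ only when $t=0$. So $I_u=B\rtimes 0=B$, and $\widetilde\lambda_u$ is just $\lambda_u$ on $B$. Proposition~\ref{prop:invariant-irr} then gives $\chi_u=\Ind_B^G\lambda_u\in\Irr_A(G)$, and Proposition~\ref{prop:all-irr} gives degree $[V:V_u]=|V|$.

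For the values, apply Lemma~\ref{lem:value-formula} with $|V_u|=1$:
\[
 \chi_u(c)=\sum_{t\in V}(-1)^{\langle\tau_t\delta_0,c\rangle}.
\]
Since $\tau_t\delta_0=\delta_t$, the pairing is $\langle\delta_t,c\rangle=c(t)$. So
\[
 \chi_u(c)=\sum_{t\in V}(-1)^{c(t)}=(|V|-|\supp c|)-|\supp c|=|V|-2|\supp c|.
\]
The vanishing criterion follows at once.

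There is no real obstacle; the only point needing care is the identification $\tau_t\delta_0=\delta_t$ and the resulting triviality of $V_u$, which uses that translations by nonzero $t$ move the point $0$.
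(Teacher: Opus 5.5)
Your proposal is correct and follows the paper's proof essentially verbatim: $\{0\}$ is an $A$-orbit, so $\delta_0\in B^A$; $\tau_t\delta_0=\delta_t$ forces $V_u=0$; and substituting $\langle\tau_t\delta_0,c\rangle=c(t)$ into Lemma~\ref{lem:value-formula} gives $|V|-2|\supp c|$. Your explicit appeals to Propositions~\ref{prop:all-irr} and~\ref{prop:invariant-irr} for the degree and the invariance only spell out what the paper leaves implicit.
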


\begin{proof}
The set $\{0\}$ is an $A$-orbit, so $\delta_0\in B^A$, and
$\tau_t\delta_0=\delta_0$ forces $t=0$.  For $c\in B^A$,
\[
 \langle\tau_t\delta_0,c\rangle=\sum_{x\in V}\delta_0(x+t)c(x)=c(t),
\]
so Lemma~\ref{lem:value-formula} gives
$\chi_u(c)=\sum_{t\in V}(-1)^{c(t)}=|V|-2|\supp c|$.
\end{proof}

Nothing so far uses any particular $V$, so we may already record the
general counterexample.  Call the pair $(A,V)$ \emph{balanced} if $V$
has an $A$-stable subset of size $|V|/2$; by
Corollary~\ref{cor:balance} this is equivalent to some sub-multiset of
the $A$-orbit sizes on $V$ summing to $|V|/2$.

\begin{theorem}\label{thm:general}
Let $A$ be any finite group of odd order acting fixed-point-freely on a
finite $\F_2$-space $V$, and let $G=B\rtimes V$ with
$B=\{f:V\to\F_2\}$.  If
$(A,V)$ is balanced, then $C=C_G(A)$ is elementary abelian,
\[
 |\Irr_A(G)|=|C/C'|,
 \qquad\text{and}\qquad
 |N_A(G)|<|C/C'| .
\]
In particular Problem 21.100 and Problem 6.3 both have negative
answers for the coprime action of $A$ on $G$.
\end{theorem}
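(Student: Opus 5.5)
The theorem follows by assembling the results of this section; no new computation is needed. Proposition~\ref{prop:fixed-subgroup} gives $C=C_G(A)=B^A\cong C_2^{\,k}$, where $k$ is the number of $A$-orbits on $V$. This uses only the fixed-point-free hypothesis~\eqref{eq:no-fixed-V}, so $C$ is elementary abelian, $C'=1$ and $|C/C'|=|B^A|=2^k$. Proposition~\ref{prop:invariant-irr} then gives the equality $|\Irr_A(G)|=|B^A|=|C/C'|$, via the bijection $u\mapsto\chi_u$.

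For the strict inequality it suffices, since $N_A(G)\subseteq\Irr_A(G)$, to exhibit a single $\chi\in\Irr_A(G)$ with a zero on $C$. We take $u=\delta_0\in B^A$ and $\chi=\chi_u$, which lies in $\Irr_A(G)$ by Lemma~\ref{lem:delta-values}. Balancedness provides an $A$-stable subset $S\subseteq V$ with $|S|=|V|/2$. Its indicator function $c=\one_S$ is constant on $A$-orbits, because $S$ is a union of orbits, so $c\in B^A=C$ with $|\supp c|=|V|/2$. Lemma~\ref{lem:delta-values} then gives $\chi_u(c)=|V|-2|V|/2=0$. Hence $\chi_u\notin N_A(G)$, and $|N_A(G)|\le|\Irr_A(G)|-1<|C/C'|$.

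Finally, we read off the two problems. The count in Problem 21.100 predicts $|N_A(G)|=|C/C'|$, which we have just contradicted. For Problem 6.3, $C$ is abelian, so every element of $\Irr(C)$ is linear; in particular $\chi_u^*(1)=1$ for the Glauberman--Isaacs correspondent. Since $\chi_u$ vanishes at $c\in C$, the claimed equivalence fails. The action is coprime because $|A|$ is odd and $G$ is a $2$-group, and it is an action by automorphisms by Lemma~\ref{lem:action}, so both problems' hypotheses are met.

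There is no real obstacle. The only step needing care is checking that $\one_S$ lies in $B^A$, which holds because an $A$-stable subset is a union of $A$-orbits. One could also note that such an $S$ exists precisely when a sub-multiset of orbit sizes sums to $|V|/2$, which is the equivalence already stated before the theorem.
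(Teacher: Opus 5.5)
Your proposal is correct and follows the paper's proof essentially step for step. Propositions~\ref{prop:fixed-subgroup} and~\ref{prop:invariant-irr} give the structure of $C$ and the count, $\chi_{\delta_0}$ from Lemma~\ref{lem:delta-values} evaluated at the indicator of a half-size $A$-stable subset gives the zero, and abelianness of $C$ makes every Glauberman--Isaacs correspondent linear, which settles Problem 6.3.
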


\begin{proof}
Propositions~\ref{prop:fixed-subgroup} and~\ref{prop:invariant-irr} give
$C=B^A$ elementary abelian with $|\Irr_A(G)|=|B^A|=|C/C'|$.  Let
$S\subseteq V$ be $A$-stable with $|S|=|V|/2$ and let $c\in B^A$ be its
indicator function.  By Lemma~\ref{lem:delta-values} the character
$\chi_{\delta_0}\in\Irr_A(G)$ satisfies $\chi_{\delta_0}(c)=0$, so
$N_A(G)$ is a proper subset of $\Irr_A(G)$.  Since $C$ is abelian every
Glauberman--Isaacs correspondent is linear, so $\chi_{\delta_0}$ has
linear correspondent and a zero on $C$, so the criterion of Problem 6.3 fails as well.
\end{proof}

Balanced pairs are abundant, and each one yields a counterexample.

\begin{proposition}\label{prop:infinitely-many}
If $(A,V)$ is balanced and $V'$ is any $\F_2[A]$-module with
$C_{V'}(A)=0$, then $(A,V\oplus V')$ is balanced.  Consequently, for
each $A$ admitting one balanced module there are balanced pairs
$(A,V)$ with $\dim V$ arbitrarily large, and hence infinitely many
groups $G$ as in Theorem~\ref{thm:general}.
\end{proposition}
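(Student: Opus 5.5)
The plan is to produce the required stable subset of $V\oplus V'$ directly and then iterate. Let $W=V\oplus V'$ with the diagonal $A$-action. This action is fixed-point-free, since $C_W(A)=C_V(A)\oplus C_{V'}(A)=0$, so $(A,W)$ meets the standing hypothesis~\eqref{eq:no-fixed-V}. By assumption there is an $A$-stable subset $S\subseteq V$ with $|S|=|V|/2$.

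Take
\[
 T=S\times V'=\{(v,v'):v\in S,\ v'\in V'\}\subseteq W .
\]
This set is $A$-stable because $a\cdot(v,v')=(a\cdot v,a\cdot v')$, and $a\cdot v\in S$ whenever $v\in S$. Its size is
\[
 |T|=|S|\,|V'|=\tfrac12|V|\,|V'|=\tfrac12|W| .
\]
So $(A,W)$ is balanced. The argument does not use semisimplicity or anything about the orbit structure of $V'$ beyond fixed-point-freeness, so this is the only step of any substance.

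For the consequences, fix a balanced $(A,V)$. We need at least one nonzero $\F_2[A]$-module with no nonzero fixed points, and $V$ itself works. Taking $V'=V^{\oplus m}$ for $m\ge0$ shows that each $(A,V^{\oplus(m+1)})$ is balanced, with dimension $(m+1)\dim V$, which is unbounded.

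Each such pair gives a group $G=B\rtimes V^{\oplus(m+1)}$ satisfying Theorem~\ref{thm:general}. These groups are pairwise non-isomorphic because their orders $2^{|W|}|W|$ are distinct. Hence there are infinitely many such $G$. The only point needing care is that $V\neq0$: a balanced pair requires an integer $|V|/2$, so $|V|\ge2$, and therefore the dimensions really do grow.
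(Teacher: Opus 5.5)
Your proof is correct and is essentially the paper's. The paper also takes $S\oplus V'$ (your $S\times V'$), which is $A$-stable of size $|S|\,|V'|=|V\oplus V'|/2$, and then adds $r$ copies of a fixed-point-free module to make $\dim V$ unbounded; your choice $V'=V^{\oplus m}$, and your checks that $C_{V\oplus V'}(A)=0$ and that the resulting groups have distinct orders, just spell out details the paper leaves implicit.
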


\begin{proof}
Let $S\subseteq V$ be $A$-stable with $|S|=|V|/2$.  Then
$S\oplus V'\subseteq V\oplus V'$ is $A$-stable of size
$|S|\,|V'|=|V\oplus V'|/2$.  Taking $V'$ to be a direct sum of $r$
copies of any fixed-point-free module gives arbitrarily large $\dim V$.
\end{proof}

\section{Choosing \texorpdfstring{$V$}{V}: the design constraint}
\label{sec:design}

By Corollary~\ref{cor:balance} and Lemma~\ref{lem:delta-values}, the
construction of Section~\ref{sec:family} produces a counterexample to
Problem 21.100 if and only if $V$ has an $A$-stable subset of
size $|V|/2$, that is, if and only if
\begin{equation}\label{eq:balance}
 \text{some sub-multiset of the $A$-orbit sizes on $V$ sums to }
 \tfrac12|V| .
\end{equation}
This section determines the smallest $(A,V)$ satisfying
\eqref{eq:balance}.  Two features of the problem make the search
finite and elementary.  First, orbit sizes are determined by the
$\F_2[A]$-module structure of $V$, so nothing of size $2^{|V|}$ is ever
involved.  Second, \eqref{eq:balance} is a subset-sum condition on at
most $|V|$ integers.

Recall that for $A$ cyclic of odd order $m$, every $\F_2[A]$-module is
a direct sum of irreducibles, and the irreducible $\F_2[A]$-modules are
indexed by the divisors $d\mid m$: the module attached to $d$ has
$\F_2$-dimension $\operatorname{ord}_d(2)$ and the generator acts on it
with order $d$.  Condition~\eqref{eq:no-fixed-V} says that no
summand has $d=1$, and faithfulness says the $d$'s have least common
multiple $m$.

\begin{lemma}\label{lem:orbit-sizes-module}
Let $V=M_1\oplus\dots\oplus M_r$, where the generator acts on $M_j$
with order $d_j>1$ and $\dim_{\F_2}M_j=\operatorname{ord}_{d_j}(2)$.
For $v=(v_1,\dots,v_r)\in V$ the $A$-orbit of $v$ has size
$\operatorname{lcm}\{d_j:v_j\neq0\}$, interpreted as $1$ when $v=0$.
\end{lemma}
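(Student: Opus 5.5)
The plan is to compute the stabilizer of $v$ in $A=\langle g\rangle$ directly, using the direct-sum decomposition. Since $A$ acts diagonally on $V=M_1\oplus\dots\oplus M_r$, we have $g^n\cdot v=v$ if and only if $g^n\cdot v_j=v_j$ for every $j$. Everything is trivial for coordinates with $v_j=0$. So the stabilizer of $v$ is the intersection, over the indices $j$ with $v_j\neq0$, of the stabilizers of the individual components $v_j$ in $M_j$. The orbit size is $[A:\mathrm{Stab}(v)]$, which is the order of the image of $g$ in the corresponding cyclic quotient. It therefore suffices to show that for $0\neq v_j\in M_j$ the stabilizer of $v_j$ is exactly the kernel of the action of $A$ on $M_j$, namely $\langle g^{d_j}\rangle$.

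For that key step I would use the irreducibility of $M_j$. The set
\[
 \{x\in M_j: g^n\cdot x=x\}
\]
is an $\F_2[A]$-submodule of $M_j$, because $g^n$ commutes with $g$. If it contains a nonzero vector $v_j$, then by irreducibility it is all of $M_j$, so $g^n$ acts trivially on $M_j$ and hence $d_j\mid n$. Conversely, $d_j\mid n$ clearly fixes $v_j$. So the stabilizer of a nonzero $v_j$ is $\{g^n: d_j\mid n\}$.

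Intersecting these stabilizers over $J=\{j:v_j\neq0\}$ gives $\{g^n:\operatorname{lcm}_{j\in J}d_j\mid n\}$. Its index in $A$ is $\operatorname{lcm}_{j\in J}d_j$, since each $d_j$ divides $m=|A|$ and hence so does their lcm. For $v=0$ the set $J$ is empty, the stabilizer is all of $A$, and the orbit size is $1$, matching the stated convention.

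There is no real obstacle. The only point needing care is the irreducibility argument showing that a nonzero vector has the full kernel as its stabilizer. This uses only that $A$ is abelian, and in particular not the dimension formula $\operatorname{ord}_{d_j}(2)$, which is not needed here.
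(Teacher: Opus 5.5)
Your proof is correct, and its skeleton is the paper's: compute the stabilizer coordinatewise, show that a nonzero $v_j$ has stabilizer $\langle g^{d_j}\rangle$, intersect over $j\in J$, and read off the index.

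The difference is in the middle step. The paper identifies $M_j$ with $\F_{2^{\dim M_j}}$, with the generator acting as multiplication by some $\theta_j$ of order $d_j$. Then $\theta_j^iv_j=v_j$ with $v_j\neq0$ gives $\theta_j^i=1$ by cancellation in a field. You instead observe that the fixed space of $g^n$ on $M_j$ is a submodule, because $g^n$ commutes with $g$, and let irreducibility finish. Your version bypasses the field model, which is itself a consequence of Schur's lemma. It also works verbatim for any irreducible module of an abelian group over any field. The paper's version is more concrete, and it is the model the paper reuses when computing orbits in $\F_4^2\oplus\F_8$.

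Both arguments need $M_j$ to be irreducible. The lemma takes this from the preceding paragraph rather than stating it. You are right that the dimension condition is not what gets used, and it could not substitute for irreducibility. For example, take $\F_4\oplus\F_{16}\oplus\F_{64}$ with the generator acting by scalars of orders $3,5,9$. The generator then has order $45$ and the dimension is $12=\operatorname{ord}_{45}(2)$, yet a nonzero vector of $\F_4$ has an orbit of size $3$, not $45$.
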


\begin{proof}
Identify $M_j$ with $\F_{2^{\dim M_j}}$ so that a fixed generator
$a$ of $A$ acts on it as multiplication by a field element
$\theta_j$ of order $d_j$.  Then $a^i\cdot v=v$ if and only if
$\theta_j^iv_j=v_j$ for all $j$, that is, $d_j\mid i$ whenever
$v_j\neq0$.  The stabilizer of $v$ in $A\cong C_m$ is therefore of
index $\operatorname{lcm}\{d_j:v_j\neq0\}$.
\end{proof}

The first restriction needs no hypothesis on $A$ beyond odd order, and
in particular rules out the smallest operator groups outright.

\begin{proposition}\label{prop:prime-fails}
Let $A$ be any finite $p$-group of odd order acting on $V=\F_2^n$ with
$C_V(A)=0$.  Then $(A,V)$ is not balanced.  In particular no
construction of Section~\ref{sec:family} whose operator group has prime
power order gives a counterexample.
\end{proposition}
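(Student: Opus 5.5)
We argue directly on orbit sizes, without any module theory. Let $|A|=p^a$ with $p$ an odd prime. Every $A$-orbit on $V$ has size equal to the index of a stabilizer, hence a power of $p$. Since $C_V(A)=0$, the only fixed vector is $0$, so the orbit $\{0\}$ has size $1$ and every other orbit has size $p^{e}$ with $e\ge1$, which is divisible by $p$. Consequently every orbit size except the single $1$ is congruent to $0$ modulo $p$.

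Now suppose $S\subseteq V$ is $A$-stable with $|S|=|V|/2=2^{n-1}$. Because $S$ is a union of orbits, $|S|$ is congruent modulo $p$ to either $0$ or $1$, according to whether $0\notin S$ or $0\in S$. The complement $V\setminus S$ is also $A$-stable of size $2^{n-1}$, and exactly one of $S$ and $V\setminus S$ contains $0$. Hence one of them has size $\equiv0\pmod p$. Since both sizes equal $2^{n-1}$, this gives $p\mid 2^{n-1}$, which is impossible because $p$ is odd.

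One degenerate case needs a separate word: if $n=0$ then $|V|/2$ is not an integer, so no such $S$ exists. Indeed the subset-sum condition \eqref{eq:balance} fails there trivially. The argument above therefore covers every case.

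The final sentence of the proposition then follows from Corollary~\ref{cor:balance}. If $(A,V)$ is not balanced, no invariant character vanishes on $C$, so $|N_A(G)|=|\Irr_A(G)|=|C/C'|$ by Proposition~\ref{prop:invariant-irr}, and the construction gives no counterexample. There is no real obstacle. The only point needing care is the observation that $\{0\}$ is the unique orbit of size prime to $p$, which is exactly where the hypothesis $C_V(A)=0$ enters.
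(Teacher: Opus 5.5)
Your proof is correct and is essentially the paper's argument: both rest on the fact that an $A$-stable subset has size $\equiv 0$ or $1 \pmod p$, with residue $1$ exactly when it contains $0$, and both rule out the case $0\notin S$ because $p\nmid 2^{n-1}$. The only difference is the case $0\in S$. You pass to the complement $V\setminus S$, while the paper combines $2^{n-1}\equiv1$ with $2^n\equiv1\pmod p$ (from counting all of $V$) through $\operatorname{ord}_p(2)$; this is the same congruence, and your version is slightly shorter.
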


\begin{proof}
Every $A$-orbit on $V$ has size a power of $p$, and by
$C_V(A)=0$ the only orbit of size $1$ is $\{0\}$.  Counting $V$ by
orbits gives $2^n\equiv1\pmod p$, so if $d=\operatorname{ord}_p(2)$ then
$d\mid n$.  A sub-multiset of the orbit sizes summing to $2^{n-1}$ has
the form
\[
 2^{n-1}=\alpha+(\text{a multiple of }p),
 \qquad\alpha\in\{0,1\},
\]
according as it uses the orbit $\{0\}$ or not.  As $p$ is odd,
$p\nmid2^{n-1}$, so $\alpha=1$ and $2^{n-1}\equiv1\pmod p$, giving
$d\mid n-1$.  With $d\mid n$ this forces $d=1$, that is $p\mid1$, a
contradiction.
\end{proof}

So the operator group must have at least two prime divisors, and its
module must mix at least two different element orders.  The smallest
possibility is the one we use.

\begin{proposition}\label{prop:minimal}
Among all pairs $(A,V)$ with $A$ \emph{cyclic} of odd order acting
faithfully on $V=\F_2^n$ with $C_V(A)=0$, condition~\eqref{eq:balance}
first
occurs at $n=7$, and there it occurs only for $|A|=21$ and
\[
 V\cong\F_4\oplus\F_4\oplus\F_8=\F_4^2\oplus\F_8 ,
\]
with the generator acting on the summands with orders $3,3,7$.  The
$A$-orbit sizes on $V$ are then
\[
 1,\ 3,3,3,3,3,\ 7,\ 21,21,21,21,21,
\]
and the sub-multisets summing to $64$ are exactly the $20$ of the forms
\[
 1+21+21+21
 \qquad\text{and}\qquad
 3+3+3+3+3+7+21+21 .
\]
\end{proposition}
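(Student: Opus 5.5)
Everything reduces, via Lemma~\ref{lem:orbit-sizes-module}, to arithmetic of divisors.  A faithful fixed-point-free module for $A\cong C_m$ ($m$ odd) is $V=\bigoplus_j M_j$ with each $d_j>1$, $d_j\mid m$, $\operatorname{lcm}(d_j)=m$ and $n=\sum_j\operatorname{ord}_{d_j}(2)$.  The orbit-size multiset is then read off from Lemma~\ref{lem:orbit-sizes-module}: for each set $J$ of summands, the $\prod_{j\in J}(2^{\dim M_j}-1)$ vectors with support exactly $J$ lie in orbits of size $\operatorname{lcm}\{d_j:j\in J\}$.  So I will enumerate all such modules with $n\le7$, compute the multiset for each, and test~\eqref{eq:balance}.

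First I prune.  Proposition~\ref{prop:prime-fails} excludes every $m$ that is a prime power, so $m$ has at least two distinct odd prime divisors $p<q$, and $V$ must contain summands of orders divisible by $p$ and by $q$.  Hence $n\ge\operatorname{ord}_p(2)+\operatorname{ord}_q(2)$ (a summand of order divisible by $pq$ would itself have dimension $\ge\operatorname{ord}_{pq}(2)$, which is at least this large in the cases that arise).  The orders are $\operatorname{ord}_3(2)=2$, $\operatorname{ord}_5(2)=4$, $\operatorname{ord}_7(2)=3$, $\operatorname{ord}_{11}(2)=10$, $\operatorname{ord}_{13}(2)=12$, $\operatorname{ord}_{15}(2)=4$, $\operatorname{ord}_{21}(2)=6$, and any prime $\ge11$ has order at least $5$ except none.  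So for $n\le7$ the only candidates are $m\in\{15,21,35\}$:
\begin{itemize}[label={}]
\item $m=15$: $\F_4\oplus\F_{16}$ ($n=6$), $\F_{16}$ with order $15$ ($n=4$), and small augmentations by an extra $\F_4$ summand of order $3$ ($n\le 6$).
\item $m=21$: $\F_4\oplus\F_8$ ($n=5$), $\F_4^2\oplus\F_8$ and $\F_4\oplus\F_8$-variants ($n\le 7$), and $\F_{64}$ of order $21$ ($n=6$).
\item $m=35$: this needs $n\ge 4+3=7$, giving $\F_{16}\oplus\F_8$, with $\F_{16}$ carrying order $5$ or $15$.
\end{itemize}
I will also allow the summand of order $3$ inside $\F_{16}$ to be absorbed as appropriate, and list all combinations for each $n$.

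For each candidate I then run a short subset-sum check.  A mod-$p$ argument like that of Proposition~\ref{prop:prime-fails} kills many cases at once.  When all orbit sizes other than $1$ are divisible by $3$, we need $2^{n-1}\equiv0$ or $1\pmod 3$, so $n$ must be odd.  Analogous congruences hold modulo the other primes for the orbits of sizes $1$, $p$ and $pq$.  For $n\le6$ these congruences, or a direct check of the small multisets, exclude everything.  For $n=7$ they leave only the $m=21$ module $\F_4^2\oplus\F_8$ and the $m=35$ modules, and for the latter I check directly that the congruences fail.  For $\F_4^2\oplus\F_8$ the support patterns give:
\begin{itemize}[label={}]
\item $15$ nonzero vectors in $\F_4^2$, forming $5$ orbits of size $3$;
\item $7$ vectors in $\F_8$, forming one orbit of size $7$;
\item $15\cdot7=105$ mixed vectors, forming $5$ orbits of size $21$.
\end{itemize}

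The final step is to enumerate the subsets of $\{1,3^5,7,21^5\}$ summing to $64$.  Parity shows that the summand $1$ and the number of odd terms must match up.  Using $k$ copies of $21$, the remainder $64-21k$ must be made from $1$, $3$'s and $7$ (total at most $23$).  This gives $k=3$ with remainder $1$, and $k=2$ with remainder $22=3\cdot5+7$.  These contribute $\binom53=10$ and $\binom52=10$ choices respectively, for $20$ in total.

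The main obstacle is making the enumeration of modules for $n\le7$ genuinely exhaustive.  That means correctly handling summands whose order is composite, such as order $15$ on $\F_{16}$ or order $21$ on $\F_{64}$, and not overlooking larger primes.  I would organize it by listing every $d>1$ with $\operatorname{ord}_d(2)\le7$ (these are the divisors of $2^k-1$ for $k\le7$) and then forming all multisets of such $d$ with dimension sum at most $7$ and odd lcm not a prime power.
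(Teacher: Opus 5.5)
Your method is the paper's: tabulate the admissible $d$, form multisets of total dimension $n\le7$, discard prime-power $m$ by Proposition~\ref{prop:prime-fails}, kill the survivors by size counts or congruences, then enumerate the sub-multisets of $\{1,3^{(5)},7,21^{(5)}\}$. However, the candidate list you actually write down is not exhaustive, and exhaustiveness is exactly what the word ``only'' in the statement requires. Your claim that for $n\le7$ only $m\in\{15,21,35\}$ occur is false.
\begin{itemize}
\item Since $\operatorname{ord}_{31}(2)=5$, the module $\F_4\oplus\F_{32}$ with orders $3,31$ is faithful and fixed-point-free, with $m=93$ and $n=2+5=7$. Your remark about primes $\ge11$ does not exclude it.
\item $\F_{64}$ with a generator of order $63$ gives $m=63$ at $n=6$. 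Your $m=21$ list misses it, because your list contains only squarefree $m$.
\item $\F_{16}\oplus\F_8$ with order $15$ on $\F_{16}$ has $m=105$, not $35$.
\end{itemize}
The correct set is $m\in\{15,21,35,63,93,105\}$, matching the paper's table.

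These omissions do not change the answer.
\begin{itemize}
\item For $\{63\}$ the orbit sizes are $1,63$, so your mod-$3$ observation applies, since $n=6$ is even.
\item For $\{3,31\}$ the orbit sizes are $1,3,31,93$. Here $93>64$ while $1+3+31=35<64$; alternatively, every sum lies in $\{0,1,3,4\}$ modulo $31$, but $64\equiv2$.
\item For both $\{5,7\}$ and $\{7,15\}$ the sums lie in $\{0,1,2,3\}$ modulo $5$, while $64\equiv4$.
\end{itemize}
Still, to prove the statement you must run the systematic enumeration from your last paragraph over all partitions of $n$ into parts $\ge2$. The divisors with $\operatorname{ord}_d(2)=e$ are $3$; $7$; $5,15$; $31$; $9,21,63$; $127$ for $e=2,\dots,7$. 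Do not rely on the pruning by prime pairs. That pruning rests on $n\ge\operatorname{ord}_p(2)+\operatorname{ord}_q(2)$, which is false in general: $\operatorname{ord}_{15}(2)=4<2+4$, and $\{7,15\}$ even reaches three primes in dimension $7$. Your own $n=4$ example contradicts it.

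Finally, in the count, say why $22$ has a unique representation from $\{1,3^{(5)},7\}$. Without the $7$ at most $16$ is available. With the $7$, the remaining $15$ must be all five $3$'s, since $14$ is not a multiple of $3$.
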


\begin{proof}
Write $V=\bigoplus_jM_j$ as in Lemma~\ref{lem:orbit-sizes-module}, with
element orders $d_j>1$ and $\operatorname{lcm}_jd_j=m=|A|$; then
$n=\sum_j\operatorname{ord}_{d_j}(2)$.  Since
$\operatorname{ord}_{d}(2)\ge2$ for $d>1$, we have $r\le n/2$
summands, and each $d_j$ divides $m$; so for each $n$ there are
finitely many possibilities, and the orbit sizes depend only on the
multiset $\{d_j\}$.

To enumerate the candidates, note that $\operatorname{ord}_d(2)=e$ means
$d\mid2^e-1$ but $d\nmid2^f-1$ for $f<e$.  For $e\le7$ this gives
\[
 \begin{array}{c|ccccccc}
 e&1&2&3&4&5&6&7\\\hline
 d&1&3&7&5,15&31&9,21,63&127
 \end{array}
\]
The value $e=1$, i.e. $d=1$, is excluded by $C_V(A)=0$.  So each
candidate corresponds to a partition of $n$ into parts from
$\{2,\dots,7\}$ together with a choice of $d$ for each part, and there
are finitely many for each $n$.  Discarding those whose $m$ is a prime
power by Proposition~\ref{prop:prime-fails}, the complete list for
$n\le7$ is the
following, where $x^{(i)}$ means the size $x$ repeated $i$ times and the
target is $2^{n-1}$.

\begin{center}
\begin{tabular}{@{}cccll@{}}
\toprule
$n$ & $m$ & $\{d_j\}$ & Orbit sizes & Why the target is missed\\
\midrule
$4$ & $15$ & $\{15\}$ & $1,15$ & parts $\le8$ total $1<8$\\
$5$ & $21$ & $\{3,7\}$ & $1,3,7,21$ & parts $\le16$ total $11<16$\\
$6$ & $15$ & $\{3,5\}$ & $1,3,5^{(3)},15^{(3)}$
 & sums are $\equiv0,1,3,4\pmod5$; $32\equiv2$\\
$6$ & $15$ & $\{3,15\}$ & $1,3,15^{(4)}$
 & sums are $\equiv0,1\pmod3$; $32\equiv2$\\
$6$ & $21$ & $\{21\}$ & $1,21^{(3)}$
 & sums are $0,1,21,22,42,43,63,64$\\
$6$ & $63$ & $\{63\}$ & $1,63$ & parts $\le32$ total $1<32$\\
$7$ & $21$ & $\{3,3,7\}$ & $1,3^{(5)},7,21^{(5)}$
 & \textbf{$1+21+21+21=64$}\\
$7$ & $35$ & $\{5,7\}$ & $1,5^{(3)},7,35^{(3)}$
 & at most one $35$; rest total $23<29$\\
$7$ & $93$ & $\{3,31\}$ & $1,3,31,93$
 & parts $\le64$ total $35<64$\\
$7$ & $105$ & $\{7,15\}$ & $1,7,15,105$
 & parts $\le64$ total $23<64$\\
\bottomrule
\end{tabular}
\end{center}

Each entry of the last column is a complete argument, of one of three
kinds.  Five rows are settled by a size count: a part exceeding the
target cannot be used, and the parts that remain total less than the
target.  The row $m=35$ is a variant: two parts $35$ already exceed the
target $64$, so at most one is used, and the other parts total
$1+5+5+5+7=23<29$.  Two rows are settled by a congruence: modulo the
stated modulus every part vanishes except $1$, and also except $3$ in
the row $\{3,5\}$, so every attainable sum lies in the listed set of
residues, which omits that of the target.  Each part $3$ is itself
divisible by $3$, which is why the row $\{3,15\}$ admits only the
residues $0$ and $1$.  The remaining row, $m=21$ with $n=6$, is settled by listing the
eight sums $\alpha+21\beta$ with $\alpha\in\{0,1\}$ and $\beta\le3$,
which omit $32$.

Only the row $n=7$, $m=21$, $\{d_j\}=\{3,3,7\}$ survives, and there
$V\cong\F_4\oplus\F_4\oplus\F_8$.  By
Lemma~\ref{lem:orbit-sizes-module} the orbit sizes are
$\operatorname{lcm}$ of the orders on the nonzero coordinates:
size $1$ for $v=0$; size $3$ for the $15$ vectors supported on
$\F_4\oplus\F_4$, giving $5$ orbits; size $7$ for the $7$ vectors
supported on $\F_8$, giving $1$ orbit; and size $21$ for the
$15\cdot7=105$ vectors with both parts nonzero, giving $5$ orbits.
This is the stated multiset, of total $128$.

Finally we enumerate the sub-multisets summing to $64$.  Let $\kappa$
be the number of $21$'s used; the remaining parts come from
$\{1,3,3,3,3,3,7\}$ and sum to at most $1+15+7=23$.  Thus
$64-21\kappa\le23$, so $\kappa\ge2$, and $21\kappa\le64$ gives
$\kappa\le3$.  If $\kappa=3$ we need $1$ more, so we take the part $1$:
this gives $\binom53=10$ sub-multisets.  If $\kappa=2$ we need $22$
from $\{1,3,3,3,3,3,7\}$; since $22\equiv1\pmod3$ and the only parts
not divisible by $3$ are $1$ and $7$, we must use exactly one of them,
and $22-1=21$ is not a sum of at most five $3$'s while $22-7=15$ is
(all five), so the unique choice is $3+3+3+3+3+7$.  This gives
$\binom52=10$ sub-multisets.  In total $20$.
\end{proof}

Proposition~\ref{prop:minimal} assumes $A$ cyclic, because its
enumeration is over cyclic modules.  In fact the assumption can be
removed entirely: no operator group of odd order, of any structure,
admits a balanced module of dimension less than $7$.  The key is that
every simple summand of $V$ carries a scalar subgroup whose order
divides all the relevant orbit sizes.

\begin{lemma}\label{lem:scalar}
Let $A\neq1$ be a finite group of odd order acting faithfully and
irreducibly on $V=\F_2^d$ with $2\le d\le7$.  Then there is an odd
integer $d_A\ge3$ that divides the size of every $A$-orbit on
$V\setminus\{0\}$, and:
\begin{enumerate}[label=\textup{(\roman*)}]
\item either there are a normal subgroup $Z\le A$ and a divisor $e>1$
 of $d$ with $\F_2[Z]\cong\F_{2^e}$, such that $V$ is an
 $\F_{2^e}$-vector space of dimension $d/e$ on which $Z$ acts by scalar
 multiplication; in this case $d_A=|Z|$ satisfies
 $\operatorname{ord}_{d_A}(2)=e$, so that
\[
 \begin{array}{c|cccccc}
 e&2&3&4&5&6&7\\\hline
 d_A&3&7&5,15&31&9,21,63&127
 \end{array}
\]
\item or $d=6$ and $A$ is a $3$-group; in this case $d_A=3$.
\end{enumerate}
\end{lemma}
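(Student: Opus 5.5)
The plan is to produce $Z$ as the image of a nontrivial abelian normal subgroup of $A$ and to use Clifford's theorem to split into a homogeneous case, which gives (i), and an imprimitive case, which gives (ii). Since $|A|$ is odd, $A$ is solvable by the Feit--Thompson theorem (assumed as background). So, as $A\neq1$, $A$ has a nontrivial abelian normal subgroup $N$, for instance a minimal normal subgroup. Restrict $V$ to $N$. By Clifford, $V_N=W_1\oplus\dots\oplus W_t$, where the $W_i$ are the homogeneous components, $A$ permutes them transitively, and $t$ is the index of a stabilizer; in particular $t$ is odd and $t\mid d$.

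\emph{Homogeneous case $t=1$.} Here $V_N\cong W^{s}$ for one irreducible $\F_2[N]$-module $W$. The image of $\F_2[N]$ in $\operatorname{End}(V)$ is then isomorphic to the image in $\operatorname{End}(W)$. That image is a commutative simple algebra, hence a field $\F_{2^e}$ with $e=\dim W$. Faithfulness forces $N$ to embed in $\F_{2^e}^\times$, so $N$ is cyclic. Put $Z=N$. Then $V$ is an $\F_{2^e}$-space of dimension $d/e$ on which $Z$ acts by scalars, so $e\mid d$. We have $e>1$ because $Z\neq1$ has odd order and $\F_2^\times=1$. Moreover $\F_{2^e}$ is generated by $Z$, so $e=\operatorname{ord}_{|Z|}(2)$, and the tabulated values of $d_A=|Z|$ are read off from the table in Proposition~\ref{prop:minimal}; each is odd and $\ge3$. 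For divisibility, let $0\neq v\in V$. A nontrivial scalar fixes no nonzero vector, so $A_v\cap Z=1$. As $Z\trianglelefteq A$, the product $ZA_v$ is a subgroup of order $|Z|\,|A_v|$, and this divides $|A|$. Hence $|Z|$ divides $[A:A_v]$, the size of the orbit of $v$.

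\emph{Imprimitive case $t>1$.} Here $t$ is odd, $t\mid d\le7$, and each $W_i$ has dimension $d/t$. I claim each $W_i$ has dimension at least $2$. If $\dim W_i=1$, then $N$ acts trivially on every $W_i$, since $\F_2$ has no nontrivial odd-order scalars, and this contradicts faithfulness with $N\neq1$. The only possibility is therefore $d=6$, $t=3$, $\dim W_i=2$. Then $A$ embeds in the stabilizer of the decomposition, which is $\mathrm{GL}_2(2)\wr S_3$. This group has order $6^3\cdot6=2^4\cdot3^4$, so an odd-order subgroup is a $3$-group, giving case (ii). Put $d_A=3$. Since $V$ is irreducible and nontrivial, $C_V(A)=0$, so every orbit on $V\setminus\{0\}$ is a nontrivial $3$-power, and $3$ divides each.

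\emph{Main obstacle.} The delicate step is the structure claim in the homogeneous case: that the image of $\F_2[N]$ is a field acting by scalars, with $e=\dim W$ and $e=\operatorname{ord}_{|Z|}(2)$. I will prove it through the Wedderburn decomposition of $\F_2[N]$, using that $N$ has odd order so $\F_2[N]$ is semisimple. A single isotypic type means only one simple factor acts nonzero, and since $N$ is abelian that factor is a field. A residual worry is whether one fixed choice of $N$ suffices in all cases. The argument as planned avoids this: any nontrivial abelian normal $N$ lands in exactly one of the two cases, and each case yields a valid conclusion.
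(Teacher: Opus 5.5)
Your proposal is correct and follows essentially the same route as the paper: Feit--Thompson gives a nontrivial abelian normal subgroup, and Clifford's theorem splits the argument into the homogeneous case, which yields the scalar field $\F_{2^e}$, and the case $d=6$ with three $2$-dimensional components, which forces $A$ to be a $3$-group. The differences are cosmetic. The paper takes $Z$ maximal among abelian normal subgroups, though it never uses maximality. It gets divisibility by noting that each $A$-orbit is a union of free $Z$-orbits, rather than from $|ZA_v|=|Z|\,|A_v|$. It also embeds $A$ in $C_3\wr C_3$ instead of counting $|\mathrm{GL}_2(2)\wr S_3|$.
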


\begin{proof}
By the Feit--Thompson theorem \cite{feit-thompson} $A$ is solvable, so
it has a nontrivial abelian normal subgroup; let $Z$ be one maximal
among these.  By Clifford's theorem \cite{clifford} $V$ is a direct
sum of $k$ $Z$-isotypic components,
which $A$ permutes transitively; they have equal dimension $d/k$, and
$k$ divides $|A|$, so $k$ is odd.  A component of dimension $1$ is
impossible: on it $Z$ would act through $\F_2^\times=1$, and as the
components are permuted transitively by $A$ and $Z$ is normal, $Z$
would act trivially on every component, contradicting faithfulness.
Hence $k\mid d$, $k$ odd, $d/k\ge2$, which for $d\le7$ leaves $k=1$, or
$k=3$ with $d=6$.

Suppose $k=1$, so $V\cong W^m$ is $Z$-homogeneous with $W$ a simple
$\F_2[Z]$-module on which $Z$ acts faithfully.  Then $\F_2[Z]$ acts on
$W$ as a field $\F_{2^e}$ with $\dim_{\F_2}W=e$, generated by the image
of $Z$; thus $Z$ is cyclic of order $d_A:=|Z|$ with
$\operatorname{ord}_{d_A}(2)=e$, and $e>1$ since $e=1$ forces $Z=1$.
The homogeneous structure makes $V$ an $\F_{2^e}$-space of dimension
$m=d/e$ on which $Z$ acts by scalars.  A nonzero scalar
$\zeta\neq1$ fixes only $0$, so every $Z$-orbit on $V\setminus\{0\}$
has size exactly $d_A$; as $Z\le A$, every $A$-orbit on
$V\setminus\{0\}$ is a union of such orbits and $d_A$ divides its size.
The table lists, for each $e\le7$, the $d_A>1$ with
$d_A\mid2^e-1$ and $\operatorname{ord}_{d_A}(2)=e$.

Suppose $k=3$ and $d=6$, so the components have dimension $2$.  The
stabilizer of a component acts on it through an odd-order subgroup of
$GL(2,2)\cong S_3$, hence through a group of order dividing $3$, and
the permutation action of $A$ on the three components has odd image in
$S_3$, again of order dividing $3$.  Thus $A$ embeds in
$C_3\wr C_3$ and is a $3$-group.  Every $A$-orbit size is then a power
of $3$, and an orbit of size $1$ outside $0$ would be a nonzero fixed
vector; but $C_V(A)$ is a proper submodule of the simple nontrivial
module $V$, hence $0$.  So $3$ divides every nonzero orbit size.
\end{proof}

\begin{theorem}\label{thm:dim-six}
Let $A$ be any finite group of odd order acting on $V=\F_2^n$ with
$C_V(A)=0$ and $n\le6$.  Then $(A,V)$ is not balanced.  Consequently
$\dim V=7$ is the least dimension in which the construction of
Section~\ref{sec:family} produces a counterexample, over all operator
groups of odd order.
\end{theorem}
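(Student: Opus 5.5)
Throughout, $A$ is of odd order, $V=\F_2^n$ with $n\le6$ and $C_V(A)=0$. We may drop faithfulness, since balance depends only on the orbit sizes and these are unchanged on replacing $A$ by its image in $GL(V)$.

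\textbf{Reduction to simple summands.} By Maschke's theorem, write $V=W_1\oplus\dots\oplus W_r$ with each $W_i$ a simple $\F_2[A]$-module. The hypothesis $C_V(A)=0$ makes each $W_i$ nontrivial, so $\dim W_i\ge2$, and $r\le3$ because $n\le6$. The group $A$ acts on $W_i$ through a faithful irreducible quotient $A/K_i$, still of odd order. Lemma~\ref{lem:scalar} applied to this quotient gives an odd $D_i\ge3$ dividing every $A$-orbit size on $W_i\setminus\{0\}$.

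\textbf{Divisibility transfers to $V$.} Let $v=(v_1,\dots,v_r)\in V$ and let $\pi_i$ be the projection to $W_i$. Since $\pi_i$ is $A$-equivariant, $\operatorname{Stab}_A(v)\le\operatorname{Stab}_A(v_i)$. Hence $|Av|$ is a multiple of $|Av_i|$, and so of $D_i$, whenever $v_i\ne0$. Consequently $|Av|$ is divisible by $\operatorname{lcm}\{D_i:v_i\neq0\}$.

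\textbf{Uniform congruence case.} Suppose some odd $D\ge3$ divides $D_i$ for every $i$. Then $D$ divides every nonzero orbit size, and counting $V$ by orbits gives $2^n\equiv1\pmod D$. A sub-multiset summing to $2^{n-1}$ would give $2^{n-1}\equiv\alpha\pmod D$ with $\alpha\in\{0,1\}$ recording whether the orbit $\{0\}$ is used. The value $\alpha=0$ is impossible since $D$ is odd. The value $\alpha=1$ together with $2^n\equiv1$ gives $2\equiv1\pmod D$, which is absurd. This is the argument of Proposition~\ref{prop:prime-fails} with $D$ in place of $p$.

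\textbf{Cases covered by the uniform argument.} This settles $r=1$, including case (ii) of Lemma~\ref{lem:scalar}, where $D_1=3$. For $r\ge2$ the possible dimension patterns are $2+2$, $2+3$, $2+4$, $3+3$ and $2+2+2$. The table in Lemma~\ref{lem:scalar} shows that a summand of dimension $2$ has $D=3$, and one of dimension $3$ has $D=7$; no case (ii) arises in dimension below $6$. Thus $2+2$, $2+2+2$ and $3+3$ have a common $D$ and are covered. Pattern $2+4$ is also covered when the $4$-dimensional summand has $D_2=3$.

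\textbf{Mixed case $2+3$.} Here $D_1=3$ and $D_2=7$, and the target is $16$. The $3$ nonzero vectors of $W_1$ form a single orbit, since its size is a multiple of $3$ and at most $3$. Likewise $W_2\setminus\{0\}$ is a single orbit of size $7$. The $21$ vectors with both coordinates nonzero have orbit sizes divisible by $21$, so they form one orbit. The multiset is therefore $1,3,7,21$, and no sub-multiset reaches $16$, since the parts below $16$ total only $11$.

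\textbf{Mixed case $2+4$ with $D_2\in\{5,15\}$.} The target is $32$. The orbits are: $\{0\}$; one orbit of size $3$ in $W_1$; orbits in $W_2\setminus\{0\}$ with sizes divisible by $5$; and mixed orbits with sizes divisible by $15$. Reducing modulo $5$, every attainable sum lies in $\{0,1,3,4\}$, while $32\equiv2\pmod5$. So the pair is not balanced.

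\textbf{Expected obstacle.} The step needing most care is the divisibility transfer from $W_i$ to mixed vectors via stabilizer inclusion, together with checking that the case list from Lemma~\ref{lem:scalar} is exhaustive for each dimension at most $4$. Once these are in place, each case reduces to a one-line congruence or size count. The final sentence of Theorem~\ref{thm:dim-six} then follows by combining this result with Proposition~\ref{prop:minimal}, which exhibits a balanced pair in dimension $7$.
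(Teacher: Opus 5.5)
Your proof is correct and takes essentially the paper's route: Maschke decomposition, Lemma~\ref{lem:scalar} on each simple summand, divisibility of mixed orbit sizes via the equivariant projections, and then a short case analysis. Your case analysis is tidier than the paper's in two places. The common-divisor congruence, which uses $2^n\equiv1\pmod D$ as in Proposition~\ref{prop:prime-fails}, absorbs the paper's separate mod-$3$, mod-$7$, $d=5$ and $d=31$ cases, and it would also cover $(3,15)$ since $3$ divides both. Your single mod-$5$ count settles the $2+4$ case with $D_2\in\{5,15\}$ without the paper's split on $y_{\{1,2\}}$.
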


\begin{proof}
By Maschke's theorem $V=\bigoplus_{i=1}^rV_i$ with each $V_i$ simple,
and no $V_i$ is trivial since $C_V(A)=0$; so $n_i:=\dim V_i\ge2$ and
the image $A_i$ of $A$ on $V_i$ satisfies the hypotheses of
Lemma~\ref{lem:scalar}.  Let $d_i:=d_{A_i}$ be the resulting divisors.

Each $A$-orbit $\mathcal O$ on $V$ has a well-defined support
$S=\{i:v_i\neq0\}$, since $A$ preserves each $V_i$.  For
$i\in S$ the projection $\mathcal O\to A\cdot v_i$ is surjective and
$A$-equivariant, so $|A\cdot v_i|$ divides $|\mathcal O|$, and
Lemma~\ref{lem:scalar} gives $d_i\mid|A\cdot v_i|$.  Hence
$|\mathcal O|$ is divisible by $D_S:=\operatorname{lcm}\{d_i:i\in S\}$.
Moreover the orbits of support $S$ partition a set of
$\prod_{i\in S}(2^{n_i}-1)$ vectors.  If $(A,V)$ is balanced, there are
therefore $\alpha\in\{0,1\}$ and integers $y_S\ge0$ with
\begin{equation}\label{eq:support-sum}
 2^{n-1}=\alpha+\sum_{\emptyset\neq S}y_S,
 \qquad D_S\mid y_S,
 \qquad y_S\le\prod_{i\in S}(2^{n_i}-1) .
\end{equation}

We check that \eqref{eq:support-sum} is unsolvable for every partition
$(n_1,\dots,n_r)$ of $n\le6$ into parts $\ge2$ and every choice of the
$d_i$ from Lemma~\ref{lem:scalar}.  Two observations dispose of most
cases.  First, if $3$ divides every $d_i$, then
$2^{n-1}\equiv\alpha\pmod3$ with $\alpha\in\{0,1\}$, which fails
whenever $n$ is even, since then $2^{n-1}\equiv2\pmod3$.  As
$3\mid d_i$ for every admissible $d_i$ except $d_i\in\{5,7,31,127\}$,
this kills all of $n=2$ and, for $n\in\{4,6\}$, every case except those
involving a part with $d_i\in\{5,7,31\}$.  Second, if $7$ divides every
$d_i$ --- the partitions $(3)$, $(3,3)$ and $(6)$ with
$d_i=7$ --- then $2^{n-1}\equiv\alpha\pmod7$, which fails since
$2^2\equiv4$ and $2^5\equiv4\pmod7$.

The remaining cases are settled directly.  For $(4)$ with $d_1=5$:
$8\not\equiv0,1\pmod5$.  For $(5)$ with $d_1=31$: $16\not\equiv0,1
\pmod{31}$.  For $(2,3)$ with $(d_1,d_2)=(3,7)$: here
$y_{\{1\}}\in\{0,3\}$, $y_{\{2\}}\in\{0,7\}$, and
$y_{\{1,2\}}\in\{0,21\}$; since $21>16$ the last is $0$, and
$\alpha+3+7<16$.  For $(2,4)$ with $(d_1,d_2)=(3,5)$: here
$y_{\{1\}}\in\{0,3\}$, $y_{\{2\}}\in5\mathbf Z\cap[0,15]$, and
$y_{\{1,2\}}\in15\mathbf Z\cap[0,45]$.  Taking
\eqref{eq:support-sum} modulo the possible values: if
$y_{\{1,2\}}=30$ the remainder $2$ is not of the form $\alpha+y_{\{1\}}
+y_{\{2\}}$; if $y_{\{1,2\}}=15$ the remainder $17$ is not attainable,
the attainable values being
$\{0,1,3,4\}+\{0,5,10,15\}\subseteq\{0,\dots,19\}\setminus\{2,7,12,17\}$;
and if $y_{\{1,2\}}=0$ the total is at most $1+3+15<32$.  For
$(2,4)$ with $(d_1,d_2)=(3,15)$: $y_{\{2\}}\in\{0,15\}$ and
$y_{\{1,2\}}\in15\mathbf Z\cap[0,45]$, so
$32-\alpha-y_{\{1\}}\in\{28,29,31,32\}$ must be a multiple of $15$,
which it is not.  This exhausts all cases; the final assertion follows
because the pair of Proposition~\ref{prop:minimal} is balanced in
dimension $7$, and balanced pairs give counterexamples by
Theorem~\ref{thm:general}.
\end{proof}

\begin{corollary}\label{cor:minima}
Over all operator groups $A$ of odd order:
the least dimension of a balanced pair $(A,V)$ is $7$; the least order
of a group $G$ in the family of Section~\ref{sec:family} refuting
Problem 21.100 is $2^{135}$; and the least order of a balanced operator
group is $|A|=15$, attained at $\dim V=8$.
\end{corollary}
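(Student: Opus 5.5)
The corollary makes three assertions. Each one should follow from results already in hand plus one explicit verification, so I would take them in turn.

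\emph{Least dimension.} Theorem~\ref{thm:dim-six} shows that no pair $(A,V)$ with $A$ of odd order, $C_V(A)=0$ and $\dim V\le6$ is balanced. Proposition~\ref{prop:minimal} exhibits the balanced pair $(C_{21},\F_4^2\oplus\F_8)$ in dimension $7$. Together these give the first claim.

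\emph{Least order of $G$.} First I would show that, within the family, $G$ refutes Problem 21.100 exactly when $(A,V)$ is balanced. By Proposition~\ref{prop:invariant-irr} we always have $|\Irr_A(G)|=|C/C'|$, so refutation means some $\chi\in\Irr_A(G)$ has a zero on $C$. Corollary~\ref{cor:balance} shows that such a zero forces balance, and Theorem~\ref{thm:general} gives the converse. Next, $|G|=2^{|V|}\cdot|V|=2^{2^n+n}$ depends only on $n=\dim V$ and is strictly increasing in $n$. By the first claim the least admissible value is $n=7$, which gives $|G|=2^{128+7}=2^{135}$.

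\emph{Least order of a balanced operator group.} I would assume the action is faithful, as in the cyclic enumeration; if it is not, one passes to the image of $A$ in $GL(V)$, which has the same orbits. By Proposition~\ref{prop:prime-fails}, $|A|$ cannot be a prime power (and $|A|>1$, since $C_V(A)=0$ with $V\ne0$). The smallest odd integer that is not a prime power is $15$, so $|A|\ge15$. A group of order $15$ is cyclic, so Proposition~\ref{prop:minimal} and its table apply.
\begin{itemize}[label=\textendash]
\item Theorem~\ref{thm:dim-six} excludes $\dim V\le6$.
\item In dimension $7$ the only balanced cyclic pair has $m=21$, so $C_{15}$ has no balanced module of dimension at most $7$.
\end{itemize}

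For the upper bound I would take $V=\F_4\oplus\F_4\oplus\F_{16}$, with the generator acting with orders $3,3,5$. This has dimension $8$ and is faithful, since $\operatorname{lcm}(3,3,5)=15$. By Lemma~\ref{lem:orbit-sizes-module} its orbit sizes are:
\begin{itemize}[label=\textendash]
\item $1$, from $v=0$;
\item $3$ five times, from the $15$ nonzero vectors supported on $\F_4^2$;
\item $5$ three times, from the $15$ nonzero vectors of $\F_{16}$;
\item $15$ fifteen times, from the $15\cdot15$ vectors with both parts nonzero.
\end{itemize}
The target is $2^7=128$, and
\[
 8\cdot15+5+3=128,
\]
so the pair is balanced.

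The main obstacle is the bookkeeping in this third claim: making sure that the dimension-$7$ exclusion for order $15$ is really covered by the table in Proposition~\ref{prop:minimal}, and that reducing to a faithful action loses nothing. The rest is direct citation.
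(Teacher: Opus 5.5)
Your proposal is correct and follows the paper's proof essentially step for step. You use Theorem~\ref{thm:dim-six} together with Proposition~\ref{prop:minimal} for the least dimension, monotonicity of $|G|=2^{2^n+n}$ for the order, and Proposition~\ref{prop:prime-fails} to get $|A|\ge15$, attained by the same module $\F_4\oplus\F_4\oplus\F_{16}$ with $8\cdot15+5+3=128$. Your extra checks are valid: that refutation within the family is equivalent to balance, and that $C_{15}$ has no balanced module of dimension $\le7$. The paper does not need the second of these, since it only claims attainment at $\dim V=8$. Your faithfulness reduction is also unnecessary, because Proposition~\ref{prop:prime-fails} does not assume faithfulness.
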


\begin{proof}
The first claim is Theorem~\ref{thm:dim-six} together with
Proposition~\ref{prop:minimal} and Theorem~\ref{thm:general}, and the
second follows since
$|G|=2^{2^n+n}$ is strictly increasing in $n=\dim V$.  For the third,
a balanced operator group has odd order that is not a prime power by
Proposition~\ref{prop:prime-fails}, so its order is at least
$3\cdot5=15$.  This is attained: let the cyclic group of order
$15$ act on $V=\F_4\oplus\F_4\oplus\F_{16}$ with element orders
$3,3,5$ on the summands.  By Lemma~\ref{lem:orbit-sizes-module} the
orbit sizes are $1$, then $3^{(5)}$ (one orbit from each summand
$\F_4^\times$ and three among the nine vectors meeting both),
$5^{(3)}$, and $15^{(15)}$, of total $256$; and
\[
 15\cdot8+5+3=128=\tfrac12|V| ,
\]
so the pair is balanced.  This is the design at $\dim V=8$ found in
Remark~\ref{rem:search}.
\end{proof}

\begin{remark}\label{rem:search}
Proposition~\ref{prop:minimal} was found by exactly this enumeration,
which is small enough to carry out by hand and was also checked by
machine.  Extending the machine enumeration to all cyclic $A$ of odd
order $m\le105$ and all faithful fixed-point-free modules with
$\dim V\le14$ returns $53$ admissible designs.  The smallest is the one
above; the next occurs at $\dim V=8$ with $|A|=15$ and element orders
$\{3,3,5\}$; and none has $|A|$ prime, in agreement with
Proposition~\ref{prop:prime-fails}.  Each admissible design yields a
counterexample by the argument of Section~\ref{sec:short}, so
counterexamples of this shape are plentiful, but $\dim V=7$ is where
they start.
\end{remark}

\section{The counterexample}\label{sec:short}

We can now fix the example and prove the inequality.  Let
\[
 U=\F_4^2,\qquad W=\F_8,\qquad V=U\oplus W,
\]
an $\F_2$-space of dimension $7$, so $|V|=128$.  Choose
$\omega\in\F_4^\times$ of order $3$ and $\zeta\in\F_8^\times$ of order
$7$, and let
\[
 T(x_1,x_2,y)=(\omega x_1,\omega x_2,\zeta y),
\]
an $\F_2$-linear map of order $21$.  Let $A=\langle a\rangle\cong C_{21}$
act on $V$ with $a$ acting as $T$.  Since $\omega\neq1$ and
$\zeta\neq1$ we have $C_V(A)=C_V(T)=0$, so~\eqref{eq:no-fixed-V} holds,
and $A$ acts as in Lemma~\ref{lem:action} on
\[
 G=B\rtimes V,
 \qquad B=\{f:V\to\F_2\},
 \qquad |G|=2^{128}\cdot2^7=2^{135}.
\]
The action is faithful and coprime, since $|A|=21$ is odd and $T$ has
order $21$.

By Proposition~\ref{prop:minimal} the twelve $A$-orbits on $V$ have
sizes $1,3,3,3,3,3,7,21,21,21,21,21$.  We label them once and for all,
as this labeling is used throughout:

\begin{center}
\begin{tabular}{@{}llll@{}}
\toprule
Orbit & Size & Coordinate & Description\\
\midrule
$\{0\}$ & $1$ & $x_0$ & the zero vector\\
$L_i^\times\times\{0\}$, $1\le i\le5$ & $3$ & $x_i$
 & $L_1,\dots,L_5$ the $\F_4$-lines of $U$\\
$\{0\}\times W^\times$ & $7$ & $y_0$ &\\
$L_i^\times\times W^\times$, $1\le i\le5$ & $21$ & $y_i$ &\\
\bottomrule
\end{tabular}
\end{center}

Here $L_1,\dots,L_5$ are the five one-dimensional $\F_4$-subspaces of
$U$; each $L_i^\times$ has three elements and is a single
$\langle\omega\rangle$-orbit, $W^\times$ is a single
$\langle\zeta\rangle$-orbit of size $7$, and each
$L_i^\times\times W^\times$ is a single $\langle T\rangle$-orbit of
size $21$ because $(\omega^i,\zeta^i)$ runs through
$C_3\times C_7$ as $i$ runs modulo $21$.

\begin{theorem}\label{thm:counterexample}
For this action, $C=C_G(A)\cong C_2^{12}$ is abelian with
\[
 |C/C'|=|\Irr_A(G)|=4096 ,
\]
and there is a $\chi\in\Irr_A(G)$ of degree $128$ and a $c\in C$ with
$\chi(c)=0$.  Consequently
\[
 |N_A(G)|\le4095<4096=|C/C'| ,
\]
and the equality of Problem 21.100 fails.
\end{theorem}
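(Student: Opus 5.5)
The plan is to assemble the statement directly from the general results of Section~\ref{sec:family}, specialized to the orbit data just tabulated. First I would invoke Proposition~\ref{prop:fixed-subgroup}. The twelve $A$-orbits listed above, with coordinates $x_0,\dots,x_5,y_0,\dots,y_5$, give $k=12$, so $C=C_G(A)=B^A\cong C_2^{12}$. Hence $C'=1$ and $|C/C'|=2^{12}=4096$. Proposition~\ref{prop:invariant-irr} then gives $|\Irr_A(G)|=|B^A|=4096$ as well. Its hypotheses are that $|A|$ is odd and $C_V(A)=0$, and both were checked when the action was defined.

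Next I would exhibit the vanishing character explicitly. Take $u=\delta_0$, the indicator of the orbit $\{0\}$, which lies in $B^A$. By Lemma~\ref{lem:delta-values}, $\chi=\chi_{\delta_0}=\Ind_B^G\lambda_{\delta_0}$ lies in $\Irr_A(G)$ and has degree $|V|=128$. Moreover $\chi(c)=128-2|\supp c|$ for every $c\in B^A$.

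For $c$ I would use the balanced sub-multiset $1+21+21+21=64$ from Proposition~\ref{prop:minimal}. Concretely, let $c$ be the indicator of $\{0\}\cup\bigcup_{i=1}^{3}(L_i^\times\times W^\times)$; in coordinates this means $x_0=y_1=y_2=y_3=1$ and all other coordinates are $0$. This set is a union of $A$-orbits, so $c\in B^A=C$. Its support has size $1+3\cdot21=64=|V|/2$, so $\chi(c)=0$.

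Finally, $\chi\in\Irr_A(G)\setminus N_A(G)$, so $|N_A(G)|\le|\Irr_A(G)|-1=4095<4096=|C/C'|$, and the equality of Problem 21.100 fails. There is no real obstacle: all the substance lies in the earlier lemmas. The only point to verify is that the chosen support is a union of orbits of the stated sizes, and the orbit table already records this.
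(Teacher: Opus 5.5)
Your proposal is correct and follows the paper's own proof essentially step for step. You invoke Proposition~\ref{prop:fixed-subgroup} and Proposition~\ref{prop:invariant-irr} for the count $4096$, then take $\chi=\chi_{\delta_0}$ and $c$ the indicator of $\{0\}\cup\bigcup_{i=1}^{3}(L_i^\times\times W^\times)$. Lemma~\ref{lem:delta-values} then gives $\chi(c)=128-2\cdot 64=0$.
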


\begin{proof}
There are twelve $A$-orbits on $V$, so
Proposition~\ref{prop:fixed-subgroup} gives $C=B^A\cong C_2^{12}$,
abelian, with $|C/C'|=4096$; and
Proposition~\ref{prop:invariant-irr} gives $|\Irr_A(G)|=|B^A|=4096$.

Take $u=\delta_0$, the indicator of $0\in V$, and let
$\chi=\chi_u=\Ind_B^G\lambda_u$, of degree $|V|=128$ by
Lemma~\ref{lem:delta-values}.  Let
\[
 c=\text{indicator of }
 \{0\}\cup\bigl(L_1^\times\times W^\times\bigr)
 \cup\bigl(L_2^\times\times W^\times\bigr)
 \cup\bigl(L_3^\times\times W^\times\bigr).
\]
This set is a union of $A$-orbits, so $c\in B^A=C$, and its size is
$1+21+21+21=64$.  By Lemma~\ref{lem:delta-values},
\[
 \chi(c)=|V|-2|\supp c|=128-128=0 .
\]
Since $N_A(G)\subseteq\Irr_A(G)$ and $\chi\in\Irr_A(G)\setminus
N_A(G)$, we get $|N_A(G)|\le4095$.
\end{proof}

Theorem~\ref{thm:counterexample} is the whole counterexample, and every
step in it can be checked by hand.  The remaining sections determine
$|N_A(G)|$ exactly.

\section{Radial coordinates}\label{sec:radial}

From now on $A$, $V$, $G$ are as in Section~\ref{sec:short}.  To compute
with $B^A$ we use the labeling in the table above.

Multiplication by $\omega$ has six orbits on $U$, namely $\{0\}$ and
$L_1^\times,\dots,L_5^\times$.  Call a Boolean function on $U$
\emph{radial} if it is constant on these orbits, and encode such a
function as
\[
 X=(x_0,x_1,\dots,x_5)=(x_0,x)\in\F_2\oplus\F_2^5 ,
\]
where $x_0$ is its value at $0$ and $x_i$ its value on $L_i^\times$.
Write
\[
 \tau(X)=x_0+\sum_{i=1}^5x_i ,
\]
which is $\sum_{z\in U}X(z)$ reduced modulo $2$, since $|L_i^\times|=3$
is odd.

\begin{lemma}[Two slices]\label{lem:two-slice}
Every $u\in B^A$ is given by a unique pair of radial functions $X,Y$ on
$U$ through
\[
 u(z,w)=
 \begin{cases}
 X(z),&w=0,\\
 Y(z),&w\neq0 .
 \end{cases}
\]
\end{lemma}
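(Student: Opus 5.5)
The plan is to use the description of $B^A$ from Proposition~\ref{prop:fixed-subgroup}: a function $u\in B^A$ is exactly a function constant on each of the twelve $A$-orbits listed in the table. We then match these twelve orbit values with the twelve coordinates $(x_0,\dots,x_5,y_0,\dots,y_5)$ of a pair of radial functions.

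\emph{Existence.} Given $u\in B^A$, restrict it to the two slices. Put $X(z)=u(z,0)$ and $Y(z)=u(z,w_0)$ for a fixed $w_0\in W^\times$. We then check three things.

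First, $X$ is radial. Indeed $T(z,0)=(\omega z,0)$, so $u(\omega z,0)=u(z,0)$, and $X$ is constant on $\{0\}$ and on each $L_i^\times$.

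Second, $u(z,w)$ for $w\neq0$ does not depend on the choice of $w\in W^\times$. For $z=0$ this holds because $\{0\}\times W^\times$ is a single orbit. For $z\in L_i^\times$ it holds because $L_i^\times\times W^\times$ is a single orbit. The latter fact was noted after the table and uses that $(\omega^j,\zeta^j)$ runs over $C_3\times C_7$.

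Third, the same orbit facts show that $Y$ is constant on $\{0\}$ and on each $L_i^\times$, so $Y$ is radial. Consequently $u$ is given by the displayed formula.

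\emph{Converse and uniqueness.} For any pair of radial functions $X,Y$, define $u$ by the displayed formula. Then $u$ is constant on each of the twelve orbits:
\begin{itemize}
\item on $\{0\}$ and each $L_i^\times\times\{0\}$, because $X$ is radial;
\item on $\{0\}\times W^\times$ and each $L_i^\times\times W^\times$, because $u(z,w)=Y(z)$ there and $Y$ is radial.
\end{itemize}
So $u\in B^A$. Uniqueness is immediate, since $X$ and $Y$ are recovered from $u$ as its values on the slices $w=0$ and $w=w_0$.

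As a consistency check, both sides have $2^{12}$ elements ($2^6\cdot2^6$ and $|B^A|=2^{12}$), so the correspondence is a bijection.

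The only step needing care is the transitivity claim for $L_i^\times\times W^\times$. This follows from the Chinese remainder theorem applied to the orders $3$ and $7$, and it is already recorded in Section~\ref{sec:short}, so I expect no real obstacle.
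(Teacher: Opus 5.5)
Your proof is correct and is essentially the paper's argument. The paper works directly from $u\circ T=u$: it uses that $T^7$ acts as $(\omega,1)$ to show each nonzero slice is radial, and powers $T^k$ with $\zeta^k w=w'$ to show all nonzero slices agree. This is the same Chinese-remainder fact you import through the orbit table, and your explicit converse (every radial pair defines an element of $B^A$) supplies the surjectivity that the paper uses only implicitly when it identifies $B^A$ with $\F_2^6\times\F_2^6$.
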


\begin{proof}
Invariance means $u\circ T=u$.  The slice at $w=0$ is then radial.  For
$w\neq0$, the element $T^7$ acts as $(\omega^7,\zeta^7)=(\omega,1)$,
so it fixes $w$ and multiplies $z$ by $\omega$; hence the slice at $w$
is radial too.  Finally, given $w,w'\neq0$ choose $k$ with
$\zeta^kw=w'$; invariance gives $u(z,w)=u(\omega^kz,w')$, which equals
$u(z,w')$ because the slice at $w'$ is radial.  So all nonzero slices
agree.  Uniqueness is clear.
\end{proof}

Thus $B^A$ is identified with $\F_2^6\times\F_2^6$, and this matches
the table of Section~\ref{sec:short}: $X$ carries the coordinates
$x_0,\dots,x_5$ of the orbits inside $U$, and $Y$ carries
$y_0,\dots,y_5$ of the orbits meeting $W^\times$.  We write
$u=(X,Y)$ and, for the element of $C$, $c=(E,F)$ with
$E=(e_0,e)$ and $F=(f_0,f)$.

For radial $X,E$ define the correlation
\[
 K_X(E)(s)=\sum_{z\in U}X(z+s)E(z)\in\F_2 ,
\]
again a radial function of $s$, and $\F_2$-bilinear in $(X,E)$.

\begin{lemma}[Correlation formula]\label{lem:correlation}
With $X=(x_0,x)$, $E=(e_0,e)$, $s_x=\sum_ix_i$, $s_e=\sum_ie_i$,
\begin{equation}\label{eq:correlation}
 K_X(E)=
 \bigl(
 x_0e_0+x\cdot e,\ \
 \tau(E)\,x+\tau(X)\,e+(s_xs_e+x\cdot e)\one
 \bigr).
\end{equation}
\end{lemma}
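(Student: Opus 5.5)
Since the correlation $K_X(E)(s)=\sum_{z\in U}X(z+s)E(z)$ is $\F_2$-bilinear in $(X,E)$, I will first check that the right-hand side of \eqref{eq:correlation} is also bilinear in $(X,E)$. This holds because $\tau$, $s_x$, $s_e$ and $x\cdot e$ are each linear in one argument, and every term in the formula is a product of a linear function of $X$ with a linear function of $E$. Both sides are therefore determined by their values on pairs of basis vectors. I will use the basis of radial functions consisting of $\delta_0$ and the indicators $\one_{L_i^\times}$ for $1\le i\le5$, and compare the two sides of \eqref{eq:correlation} at each such pair.

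The $0$th coordinate can be handled uniformly, without reducing to a basis. At $s=0$ we have $K_X(E)(0)=\sum_zX(z)E(z)=x_0e_0+\sum_i|L_i^\times|x_ie_i$. Since $|L_i^\times|=3$ is odd, this is $x_0e_0+x\cdot e$.

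For the coordinate indexed by $L_j^\times$ I evaluate at some $s\in L_j^\times$, which is legitimate because $K_X(E)$ is radial. There are four basis cases.

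If $X=E=\delta_0$, the correlation is $\delta_0(s)$, which is $0$ at $s\ne0$; the formula also gives $0$, since $x=e=0$.

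If exactly one of $X,E$ is $\delta_0$, the correlation is the other function evaluated at $\pm s=s$. For example, with $X=\delta_0$ and $E=\one_{L_i^\times}$ it equals $\delta_{ij}$, which matches the term $\tau(X)e=e$. The case $E=\delta_0$ is symmetric and matches $\tau(E)x$.

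If $X=\one_{L_i^\times}$ and $E=\one_{L_k^\times}$, the correlation is the parity of $\#\{z\in L_k^\times: z+s\in L_i^\times\}$, and I split on whether $i=k$.

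When $i=k$, such a $z$ requires $s\in L_i$. In that case $L_i=\{0,s,z,z+s\}$ is the $\F_4$-line, so there are exactly $2$ admissible values of $z$. The count is even in every case, so the correlation is $0$. The formula also gives $0$: here $x=e=\varepsilon_i$, $\tau(X)=\tau(E)=1$, $s_xs_e=1$ and $x\cdot e=1$, so every term cancels.

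When $i\ne k$, we have $U=L_i\oplus L_k$, so $s$ decomposes uniquely as $z+w$ with $z\in L_k$ and $w\in L_i$. The count is $1$ exactly when both components are nonzero, that is, when $s$ lies on neither $L_i$ nor $L_k$. Hence the correlation is $1$ for $j\notin\{i,k\}$ and $0$ for $j\in\{i,k\}$. The formula gives $\varepsilon_k+\varepsilon_i+(1\cdot1+0)\one$, which agrees.

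The only step needing genuine care is this last pair of line computations, which use the geometry of the five $\F_4$-lines in $U$: two distinct lines span $U$ and meet only in $0$. Everything else is bookkeeping, and bilinearity then extends the identity from basis pairs to all radial $X,E$.
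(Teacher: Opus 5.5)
Your proof is correct, but it is organized differently from the paper's. The paper works with general radial $X,E$. It fixes $s\in L_i^\times$, splits the sum over $z$ along the six $\langle\omega\rangle$-orbits, and uses the fact that the three points of $s+L_j^\times$ ($j\neq i$) lie one on each remaining line. This yields the intermediate expression $K_X(E)_i=x_ie_0+x_0e_i+\sum_{j\neq i}e_j\sum_{k\neq i,j}x_k$, and the paper then rearranges the double sum algebraically into $s_xs_e+s_xe_i+x_is_e+x\cdot e$.

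You instead observe that both sides of \eqref{eq:correlation} are bilinear and compare them on pairs from the basis $\delta_0,\one_{L_1^\times},\dots,\one_{L_5^\times}$. Your distinct-lines case, which you handle through the decomposition of $U$ as the direct sum of two distinct $\F_4$-lines, is the same geometric input that the paper establishes by its injectivity argument.

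Your route needs no algebraic rearrangement, and each case reduces to a one-line parity count. The trade-off is that you only verify a closed form that is already written down. The paper's computation actually derives that closed form, and so explains where the terms $\tau(E)x$, $\tau(X)e$ and $s_xs_e$ come from. A minor wording point: $x\cdot e$ is a sum of products of linear functionals rather than a single product, but it is still bilinear, which is all your argument uses.
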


\begin{proof}
At $s=0$ we get
$K_X(E)(0)=x_0e_0+\sum_{i=1}^53x_ie_i=x_0e_0+x\cdot e$.

Now fix $s\in L_i^\times$ and split the sum over the six
$\langle\omega\rangle$-orbits of $z$.  The term $z=0$ contributes
$X(s)E(0)=x_ie_0$.  For $z\in L_i^\times$ we have $E(z)=e_i$, and as
$z$ runs over the three elements of $L_i^\times$ the value $z+s$ is
once $0$ and twice in $L_i^\times$; so the contribution is
$e_i(x_0+2x_i)=x_0e_i$.

For $j\neq i$ and $z\in L_j^\times$ we have $E(z)=e_j$, and the three
elements of $s+L_j^\times$ lie one on each of the three lines other
than $L_i$ and $L_j$.  Indeed none of them is $0$ or lies on $L_i$ or
$L_j$, since $s\notin L_j$ and $z\notin L_i$; and if $s+z$ and $s+z'$
lay on a common line $L_k$ then $z+z'\in L_j^\times\cap L_k=0$, a
contradiction.  Hence $L_j^\times$ contributes
$e_j\sum_{k\neq i,j}x_k$, and altogether
\[
 K_X(E)_i=x_ie_0+x_0e_i+\sum_{j\neq i}e_j\sum_{k\neq i,j}x_k .
\]
Since $\sum_{k\neq i,j}x_k=s_x+x_i+x_j$, the last sum equals
\[
 (s_x+x_i)(s_e+e_i)+\bigl(x\cdot e+x_ie_i\bigr)
 =s_xs_e+s_xe_i+x_is_e+x\cdot e ,
\]
using $2x_ie_i=0$.  Therefore
\[
 K_X(E)_i=x_i(e_0+s_e)+e_i(x_0+s_x)+(s_xs_e+x\cdot e)
 =\tau(E)x_i+\tau(X)e_i+(s_xs_e+x\cdot e),
\]
which is the $i$th coordinate in~\eqref{eq:correlation}.
\end{proof}

Now let $u=(X,Y)$ and $c=(E,F)$, and for $t=(s,v)\in U\oplus W$ put
$h_{u,c}(t)=\langle\tau_tu,c\rangle$.  By
Lemma~\ref{lem:orbit-constant} this depends only on the $A$-orbit of
$t$, so two radial functions of $s$ determine it.  Write
$H_0(s)=h_{u,c}(s,0)$ and $H_1(s)=h_{u,c}(s,v)$ for any $v\neq0$; then
\begin{align}
 H_0&=K_X(E)+K_Y(F),\label{eq:H0}\\
 H_1&=K_Y(E)+K_X(F).\label{eq:H1}
\end{align}
Indeed, summing $u(z+s,w+v)c(z,w)$ over $(z,w)$ and splitting on $w$:
for $v=0$ the slice $w=0$ gives $K_X(E)$ and the seven slices $w\neq0$
give $7K_Y(F)=K_Y(F)$ in $\F_2$; for $v\neq0$ the slices $w=0$ and
$w=v$ give $K_Y(E)$ and $K_X(F)$, while the remaining six slices give
$6K_Y(F)=0$.  In particular $H_1$ does not depend on the choice of
$v\neq0$.

Set $P=X+Y$ and $Q=E+F$.  Bilinearity of $K$ gives
\begin{align}
 H_0+H_1&=K_P(Q),\label{eq:Hsum}\\
 H_0&=K_P(E)+K_Y(Q).\label{eq:H0-reduced}
\end{align}
Finally, for radial $H=(h_0,h_1,\dots,h_5)$ put
\[
 S(H)=(-1)^{h_0}+3\sum_{i=1}^5(-1)^{h_i}
 =\sum_{s\in U}(-1)^{H(s)},
\]
so that Lemma~\ref{lem:value-formula} becomes
\begin{equation}\label{eq:value-radial}
 |V_u|\,\chi_u(c)=S(H_0)+7S(H_1).
\end{equation}

\section{The exact zero criterion}\label{sec:criterion}

Let
\[
 \delta=(1,0,0,0,0,0),\qquad
 J=(1,1,1,1,1,1),
\]
and for $R\subseteq\{1,\dots,5\}$ let $\rho_R=(0,\one_R)$ with
$\one_R$ the indicator vector of $R$.

\begin{lemma}[Zero patterns]\label{lem:zero-patterns}
Expression~\eqref{eq:value-radial} vanishes if and only if
\begin{equation}\label{eq:zero-pattern}
 (H_0,H_1)=(\delta+\varepsilon J,\ \rho_R+\varepsilon J)
\end{equation}
for some $\varepsilon\in\F_2$ and some three-element subset
$R\subseteq\{1,\dots,5\}$.  There are exactly twenty such patterns, and
they correspond bijectively to the twenty $A$-stable subsets of $V$ of
size $64$ found in Proposition~\ref{prop:minimal}.
\end{lemma}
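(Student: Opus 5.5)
The plan is to read \eqref{eq:value-radial} as a signed sum over the twelve $A$-orbits of $V$ and then apply the subset-sum enumeration of Proposition~\ref{prop:minimal}.

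\emph{Step 1: rewrite as an orbit sum.} Expanding $S(H_0)+7S(H_1)$ with the definition of $S$ gives
\[
 S(H_0)+7S(H_1)=\sum_{j}(-1)^{\eta_j}n_j ,
\]
where the orbits and sizes are those of the table in Section~\ref{sec:short}. The exponents $\eta_j$ and sizes $n_j$ are as follows.
\begin{enumerate}[label=\textup{(\roman*)}]
\item The orbit $x_0$: size $1$, exponent $h^{(0)}_0$.
\item The orbits $x_i$: size $3$, exponent $h^{(0)}_i$.
\item The orbit $y_0$: size $7$, exponent $h^{(1)}_0$.
\item The orbits $y_i$: size $21$, exponent $h^{(1)}_i$.
\end{enumerate}
This is just \eqref{eq:orbit-sum} made explicit: $(H_0,H_1)$ is exactly the orbit-wise function $t\mapsto h_{u,c}(t)$. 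The expression therefore vanishes iff the set $S_-$ of orbits with sign $-1$ has total size $64$. Equivalently, the union $\Sigma$ of those orbits is an $A$-stable subset of $V$ of size $64$.

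\emph{Step 2: apply the enumeration.} By Proposition~\ref{prop:minimal}, $\Sigma$ must be one of the twenty sets of type $1+21+21+21$ or $3^{(5)}+7+21+21$. Translate each type back into the pair $(H_0,H_1)$, where a coordinate equals $1$ exactly on the orbits in $\Sigma$.
\begin{enumerate}[label=\textup{(\alph*)}]
\item Type $1+21\cdot3$: $H_0=\delta$, since only $x_0$ is negative. $H_1$ is $1$ exactly on three of $y_1,\dots,y_5$ and $0$ at $y_0$, so $H_1=\rho_R$ with $|R|=3$. This is the case $\varepsilon=0$.
\item Type $3^{(5)}+7+21\cdot2$: $H_0$ has $h_0=0$ and $h_i=1$ for all $i$, so $H_0=J+\delta$. $H_1$ has $h_0=1$ and is $1$ on two of the $y_i$, so it is $0$ on a $3$-set $R$. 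Hence $H_1=J+\rho_R$, which is the case $\varepsilon=1$.
\end{enumerate}
In each type, $R$ ranges over the $\binom53=10$ three-subsets: directly in (a), and as complements of the chosen pairs in (b). This gives exactly twenty patterns, in bijection with the twenty sets $\Sigma$. The converse direction is immediate: each pattern yields a signed sum $\pm(128-2\cdot64)=0$.

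The main point needing care is the bookkeeping in Step 2, in particular that the complement pattern in type (b) is encoded as $\rho_R+J$ with $R$ the three \emph{unused} $21$-orbits. Everything else follows from Proposition~\ref{prop:minimal} and the definition of $S$.
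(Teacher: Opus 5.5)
Your proposal is correct and follows essentially the same route as the paper. Both proofs read $S(H_0)+7S(H_1)$ as $|V|-2|\Sigma|$ for the $A$-stable set $\Sigma$ on which $h_{u,c}=1$, invoke the enumeration of Proposition~\ref{prop:minimal}, and translate the two families into $(\delta,\rho_R)$ and $(\delta+J,\rho_R+J)$. In the second family $R$ is the complement of the chosen pair of $21$-orbits, exactly as you handle it.
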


\begin{proof}
By Lemma~\ref{lem:orbit-constant}, $h_{u,c}$ is the indicator function
of an $A$-stable subset $S\subseteq V$, and
$|V_u|\chi_u(c)=|V|-2|S|$.  So the expression vanishes precisely when
$|S|=64$, and Proposition~\ref{prop:minimal} lists the twenty
possibilities: either $S$ is $\{0\}$ together with three of the five
orbits $L_i^\times\times W^\times$, or $S$ consists of all five orbits
$L_i^\times\times\{0\}$, the orbit $\{0\}\times W^\times$, and two of
the orbits $L_i^\times\times W^\times$.

Translate these into the coordinates of Lemma~\ref{lem:two-slice}: the
first family has $H_0=\delta$ (the value $1$ at $z=0$ and $0$ on each
$L_i^\times$ in the slice $w=0$) and $H_1=\rho_R$ with $|R|=3$; the
second family has $H_0=(0,\one)=\delta+J$ and
$H_1=(1,\one_{R^{c}})=\rho_R+J$ with $|R^{c}|=2$, i.e. $|R|=3$.  These
are exactly the patterns~\eqref{eq:zero-pattern} with $\varepsilon=0$
and $\varepsilon=1$, and there are $2\binom53=20$ of them.
\end{proof}

\begin{remark}
Lemma~\ref{lem:zero-patterns} can also be proved by direct sign
arithmetic: writing $S(H_0)=a+3\sum b_i$ and $S(H_1)=d+3\sum e_i$ with
$a,b_i,d,e_i\in\{\pm1\}$, vanishing reads
$a+3\sum b_i+7d+21\sum e_i=0$.  Since $\sum e_i$ is odd and
$|a+3\sum b_i+7d|\le23$, one gets $\sum e_i=\pm1$; the case
$\sum e_i=-1$ forces $a=-1$, all $b_i=1$, $d=1$ and exactly three
$e_i=-1$, and the case $\sum e_i=1$ is the simultaneous sign reversal.
We prefer the orbit-counting proof because it shows what the twenty
patterns are.
\end{remark}

By~\eqref{eq:Hsum} a zero therefore requires
\begin{equation}\label{eq:target}
 K_P(Q)=H_0+H_1=\delta+\rho_R
\end{equation}
for some three-element $R$.  Write $P=(p_0,p)$ with $p\in\F_2^5$.

\begin{lemma}\label{lem:odd-augmentation}
If $\tau(P)=1$ then $K_P$ is invertible on the six-dimensional space of
radial functions on $U$.
\end{lemma}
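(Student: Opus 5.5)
The cleanest route avoids the explicit formula~\eqref{eq:correlation} and views $K_P$ as convolution in the group algebra $\F_2[U]$. Identify a Boolean function $E$ on $U$ with the element $\sum_{z\in U}E(z)\,z$ of $\F_2[U]$, and likewise for $P$. Because $U$ is elementary abelian of exponent $2$, we have $z+s=s-z$. Hence
\[
 K_P(E)(s)=\sum_{z\in U}P(s+z)E(z)=\sum_{z\in U}P(s-z)E(z)=(P*E)(s).
\]
So $K_P$ is the restriction to radial functions of multiplication by $P$ in the commutative ring $\F_2[U]$.

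The key step is that $P$ is a unit of $\F_2[U]$ whenever $\tau(P)=1$. The augmentation $\varepsilon\colon\F_2[U]\to\F_2$ sends $P$ to $\sum_{z\in U}P(z)$. As observed right after the definition of $\tau$, this equals $\tau(P)$, since each $L_i^\times$ has odd size $3$. Because $U$ is a $2$-group and we are in characteristic $2$, the augmentation ideal $I$ is nilpotent. Concretely, $(z-1)^2=z^2-1=0$ for every $z\in U$, $I$ is generated by the elements $z-1$, and $\F_2[U]$ is commutative, so $I^{n}=0$ for $n$ large. Thus $\tau(P)=1$ gives $P=1+N$ with $N\in I$ nilpotent. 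Then $P$ is invertible, with inverse $\sum_{k\ge0}N^k$, a finite sum.

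It follows that multiplication by $P$ is injective on all of $\F_2[U]$, and in particular on the subspace of radial functions. This subspace is mapped into itself, since $K_P(E)$ is again radial, as noted after the definition of $K_X(E)$. An injective linear endomorphism of the finite-dimensional ($6$-dimensional) space of radial functions is bijective, which proves the lemma.

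I expect the only point needing care to be the identification of the correlation with convolution, namely that the sign $s+z$ versus $s-z$ is harmless in characteristic $2$. Everything else is the standard fact that augmentation-one elements of a modular $p$-group algebra are units. As an independent cross-check one could also verify directly from~\eqref{eq:correlation}, with $\tau(P)=1$, that $K_P(E)=0$ forces $E=0$. That check would solve $e=\tau(E)p+(s_ps_e+p\cdot e)\one$ together with $p_0e_0+p\cdot e=0$ by cases on $\tau(E)$. I would not rely on it as the main argument.
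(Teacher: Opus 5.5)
Your proposal is correct and is essentially the paper's proof: both identify $K_P$ with multiplication by $P$ in $\F_2[U]$, identify $\tau(P)$ with the augmentation of $P$, and use nilpotence of the augmentation ideal to conclude that $P$ is a unit. The only difference is the last step. The paper shows $P^{-1}$ is radial because multiplication by $\omega$ is a ring automorphism fixing $P$. You instead note that multiplication by $P$ is injective on the six-dimensional radial subspace, which it preserves, so it is bijective there; either finish works.
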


\begin{proof}
Identify functions on $U\cong C_2^4$ with the group algebra $\F_2[U]$.
Every element of $U$ is an involution, so the coefficient of $s$ in a
product equals $\sum_{z}X(z+s)E(z)=K_X(E)(s)$; that is, $K_P$ is
multiplication by $P$.  Choosing a basis $g_1,\dots,g_4$ of $U$ and
setting $z_i=g_i+1$ gives
\[
 \F_2[U]\cong\F_2[z_1,z_2,z_3,z_4]/(z_1^2,z_2^2,z_3^2,z_4^2),
\]
so the augmentation ideal is nilpotent and every element of
augmentation $1$ is a unit.  The augmentation of a radial $P$ is
$p_0+3\sum_ip_i=\tau(P)$, so $P$ is a unit.  Multiplication by
$\omega$ is a ring automorphism of $\F_2[U]$ fixing $P$, hence fixing
$P^{-1}$; so $P^{-1}$ is radial and $K_P$ restricts to an invertible
map of the radial subspace.
\end{proof}

\begin{proposition}[Complete zero criterion]\label{prop:zero-criterion}
Let $u=(X,Y)\in B^A$, put $P=X+Y=(p_0,p)$ and $Y=(y_0,y)$, and when
$\tau(P)=0$ set
\[
 r=p+p_0\one .
\]
Then $\chi_u$ vanishes at some element of $C$ if and only if one of the
following holds:
\begin{enumerate}[label=\textup{(\roman*)}]
\item $\tau(P)=1$;
\item $\tau(P)=0$, $\wt(r)=2$, $\tau(Y)=1$, and $r\cdot y=1+p_0$.
\end{enumerate}
\end{proposition}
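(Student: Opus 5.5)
By Lemma~\ref{lem:zero-patterns}, $\chi_u$ vanishes at $c=(E,F)$ exactly when $(H_0,H_1)$ is one of the twenty patterns~\eqref{eq:zero-pattern}. I will therefore fix $u=(X,Y)$ and ask whether some $(E,Q)$, with $Q=E+F$, hits a pattern. By~\eqref{eq:Hsum} and~\eqref{eq:H0-reduced} the two conditions are
\[
 K_P(Q)=\delta+\rho_R,\qquad K_P(E)+K_Y(Q)=\delta+\varepsilon J .
\]
Both are linear in the unknowns $(E,Q)$, and the map $(E,F)\mapsto(E,Q)$ is a bijection. So for each of the twenty choices of $(R,\varepsilon)$ this is an affine system over $\F_2$, and the whole proof is a solvability analysis of that system.

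\textbf{Case $\tau(P)=1$.} By Lemma~\ref{lem:odd-augmentation}, $K_P$ is invertible on radial functions. Pick any three-element $R$ and set $Q=K_P^{-1}(\delta+\rho_R)$. Then $E=K_P^{-1}(\delta+K_Y(Q))$ solves the second equation with $\varepsilon=0$, so $\chi_u$ has a zero. This gives (i).

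\textbf{Case $\tau(P)=0$.} Here I compute the image of $K_P$ from~\eqref{eq:correlation}. With $\tau(P)=0$ we have $K_P(Q)=(p_0q_0+p\cdot q,\ \tau(Q)p+(s_ps_q+p\cdot q)\one)$. Since $\tau(P)=0$ gives $s_p=p_0$, the term $s_ps_q+p\cdot q$ is constant across the coordinates $1,\dots,5$.

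\emph{First equation.} The vector part of $K_P(Q)$ lies in the span of $p$ and $\one$, so I need $\one_R\in\{\alpha p+\beta\one\}$. Because $|R|=3$, $\one_R$ is neither $0$ nor $\one$, so it must equal $p$ or $p+\one$. Rewriting this in terms of $r=p+p_0\one$, together with the bookkeeping of the first coordinate $p_0q_0+p\cdot q=1$ and the constant term, should reduce to $\wt(r)=2$, that is, $\one_{R^c}=r$ up to the correct normalisation. I expect the exact parity conditions on $q_0$, $\tau(Q)$, $p\cdot q$ to pin this down, and I will verify them coordinate by coordinate.

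\emph{Second equation.} I need $\delta+\varepsilon J+K_Y(Q)\in\operatorname{im}K_P=\{(p_0q_0+p\cdot q,\ \tau(Q)p+(s_ps_q+p\cdot q)\one)\}$. Again from~\eqref{eq:correlation}, $K_Y(Q)=(y_0q_0+y\cdot q,\ \tau(Q)y+\tau(Y)q+(s_ys_q+y\cdot q)\one)$. The term $\tau(Y)q$ has to be absorbed into the span of $p$ and $\one$. Since $Q$ is already constrained by the first equation to an affine family (a kernel coset of $K_P$), I will parametrise that coset explicitly. The condition should then collapse to $\tau(Y)=1$ and $r\cdot y=1+p_0$. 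The mechanism is this: $\tau(Y)=1$ lets the free part of $q$ be adjusted, while the first coordinate and the constant produce the scalar condition on $r\cdot y$.

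The main obstacle is this last elimination: keeping the $\F_2$ bookkeeping of the first coordinates, the constants $s\cdot s$, and the freedom in $\varepsilon$ and in the kernel of $K_P$ straight. I will handle it by describing $\ker K_P$ explicitly when $\tau(P)=0$ and then reducing everything to scalar equations in $\tau(Q)$, $q_0$, $p\cdot q$, $y\cdot q$. As a safeguard I will check the resulting criterion against a count, since it should reproduce the total $4096-1728$ claimed later.
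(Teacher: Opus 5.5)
Your reduction to the two affine equations and your treatment of $\tau(P)=1$ are exactly the paper's. For $\tau(P)=0$, however, which is the substance of the proposition, you only announce the outcome (``should reduce to'', ``I expect'', ``should then collapse to''). The mechanism you sketch would not get you there.

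The missing identity is this. Because $s_p=p_0$ and $s_q=q_0+\tau(Q)$, the constant $p_0s_q+p\cdot q$ in the line part equals $\alpha+p_0\beta$, where $\alpha=p_0q_0+p\cdot q$ is the first coordinate and $\beta=\tau(Q)$. Hence
\[
 K_P(Q)=\alpha J+\beta\,(0,r),\qquad \alpha+p_0\beta=r\cdot q ,
\]
so $\operatorname{im}K_P=\langle J,(0,r)\rangle$ is at most two-dimensional. Your span test on the line part does give necessity of $\wt(r)=2$ for the first equation, using $\wt(p)\equiv p_0\pmod 2$. But the second step needs the exact solution set $\{Q:\tau(Q)=1,\ r\cdot q=1+p_0\}$. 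That comes from this identity, not from parametrizing $\ker K_P$.

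For the second equation, the line-part heuristic (``absorb $\tau(Y)q$ into the span of $p$ and $\one$'') is genuinely insufficient. Take $Y=0$: then $K_Y(Q)=0$ and nothing needs absorbing. Yet neither $\delta$ nor $\delta+J$ lies in $\langle J,(0,r)\rangle$, so there is no zero.

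So $\tau(Y)=1$ is needed because $\tau(Y)=0$ is obstructed, not because $\tau(Y)=1$ frees up $q$. The argument runs as follows.
\begin{enumerate}[label=\textup{(\alph*)}]
\item Since $J\in\operatorname{im}K_P$, the value of $\varepsilon$ is irrelevant, and the condition is $\delta+K_Y(Q)\in\langle J,(0,r)\rangle$.
\item After removing the $J$-component fixed by the first coordinate, this becomes $y+tq+(1+y_0+t\sigma)\one\in\{0,r\}$, with $t=\tau(Y)$ and $\sigma=\sum_iq_i$.
\item If $t=0$, the left side has coordinate sum $1$, while $0$ and $r$ have even weight. So $t=1$.
\item Pairing with $r$, using $r\cdot\one=r\cdot r=0$, gives $r\cdot y=r\cdot q=1+p_0$.
\item Sufficiency needs an explicit witness, such as $q=y+(1+y_0)\one$ and $q_0=1$.
\end{enumerate}

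Finally, checking against $4096-1728$ is no safeguard inside the paper, since that count is derived from this proposition.
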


\begin{proof}
Suppose $\tau(P)=1$.  Using Lemma~\ref{lem:odd-augmentation}, pick any
three-element $R$ and set
\[
 Q=K_P^{-1}(\delta+\rho_R),
 \qquad
 E=K_P^{-1}\bigl(\delta+K_Y(Q)\bigr),
 \qquad F=E+Q .
\]
Then~\eqref{eq:H0-reduced} gives $H_0=\delta$ and~\eqref{eq:Hsum} gives
$H_1=\rho_R$, so $\chi_u$ vanishes at $c=(E,F)$ by
Lemma~\ref{lem:zero-patterns}.

Assume now $\tau(P)=0$, so $\sum_ip_i=p_0$.  For $Q=(q_0,q)$,
formula~\eqref{eq:correlation} simplifies to
\begin{equation}\label{eq:image-even}
 K_P(Q)=\alpha J+\beta\,(0,r),
 \qquad
 \alpha=p_0q_0+p\cdot q,\quad \beta=\tau(Q),
\end{equation}
since the first coordinate is $\alpha$ and the line part is
$\beta p+(p_0\sum_iq_i+p\cdot q)\one=\alpha\one+\beta r$.  Moreover
\begin{equation}\label{eq:alpha-beta}
 \alpha+p_0\beta=r\cdot q .
\end{equation}
If $r\neq0$, every pair $(\alpha,\beta)$ is realized: choose $q$ with
$r\cdot q=\alpha+p_0\beta$ and then $q_0=\beta+\sum_iq_i$.

The target~\eqref{eq:target} has first coordinate $1$ and line part of
weight $3$.  By~\eqref{eq:image-even}, an element of the image with
first coordinate $1$ has line part $\one$ or $\one+r$, of weight $5$ or
$5-\wt(r)$.  Now $\wt(r)$ is even: if $p_0=0$ then $r=p$ has
$\sum_ip_i=0$, and if $p_0=1$ then $\wt(r)=5-\wt(p)$ with $\wt(p)$
odd.  So the target is attained exactly when $\wt(r)=2$, in which case
$R$ is the complement of $\supp(r)$ and~\eqref{eq:target} becomes
\begin{equation}\label{eq:Q-critical}
 \tau(Q)=1,\qquad r\cdot q=1+p_0 .
\end{equation}

It remains to decide when $E$ can be chosen in~\eqref{eq:H0-reduced}.
Since $J$ lies in the image of $K_P$, the value of $\varepsilon$
in~\eqref{eq:zero-pattern} is immaterial, and solvability amounts to
\begin{equation}\label{eq:residual-membership}
 \delta+K_Y(Q)\in\langle J,(0,r)\rangle .
\end{equation}
Put $t=\tau(Y)$ and $\sigma=\sum_iq_i$, so $q_0=1+\sigma$
by~\eqref{eq:Q-critical}.  Substituting~\eqref{eq:correlation} into
\eqref{eq:residual-membership} and subtracting the multiple of $J$
prescribed by the first coordinate turns it into
\begin{equation}\label{eq:critical-affine}
 y+tq+(1+y_0+t\sigma)\one\in\{0,r\} .
\end{equation}

If $t=0$ then $\sum_iy_i=y_0$, so the coordinate sum of the left side
of~\eqref{eq:critical-affine} is $\sum_iy_i+5(1+y_0)=1$, which is odd,
whereas $0$ and $r$ have even coordinate sum.  So $t=0$ is impossible
and $\tau(Y)=1$.  Taking the inner product of
\eqref{eq:critical-affine} with $r$ and using
$r\cdot\one=\wt(r)=0$ and $r\cdot r=\wt(r)=0$ in $\F_2$, we get
$r\cdot y=r\cdot q$, which with~\eqref{eq:Q-critical} gives
$r\cdot y=1+p_0$.

Conversely, assume $\tau(Y)=1$ and $r\cdot y=1+p_0$, and set
\[
 q=y+(1+y_0)\one,\qquad q_0=1 .
\]
From $\sum_iy_i=1+y_0$ we get $\sigma=\sum_iq_i=0$, hence
$\tau(Q)=1$; and $r\cdot q=r\cdot y=1+p_0$ because $r\cdot\one=0$.  So
\eqref{eq:Q-critical} holds, and the left side of
\eqref{eq:critical-affine} is $y+y+(1+y_0)\one+(1+y_0)\one=0$.  Hence a
suitable $E$ exists and $\chi_u$ has a zero on $C$.
\end{proof}

\section{The exact count}\label{sec:count}

\begin{theorem}\label{thm:count}
For the action of Section~\ref{sec:short},
\[
 |N_A(G)|=1728
 \qquad\text{and}\qquad
 |C_G(A)/C_G(A)'|=4096 .
\]
\end{theorem}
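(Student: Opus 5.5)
The plan is to count the complement. The criterion of Proposition~\ref{prop:zero-criterion} describes exactly which $u\in B^A$ give a $\chi_u$ with a zero on $C$. By Proposition~\ref{prop:invariant-irr}, $u\mapsto\chi_u$ is a bijection $B^A\to\Irr_A(G)$. Hence
\[
 |N_A(G)|=4096-\#\{u\in B^A:\chi_u\text{ vanishes somewhere on }C\}.
\]
The second equality $|C/C'|=4096$ is already Theorem~\ref{thm:counterexample}. Using Lemma~\ref{lem:two-slice}, I parametrize $u$ by the pair $(X,Y)\in\F_2^6\times\F_2^6$. The change of variables $(X,Y)\mapsto(P,Y)$ with $P=X+Y$ is a bijection, so I count pairs $(P,Y)$ instead, with $P$ and $Y$ ranging independently over $\F_2^6$. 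Conditions~(i) and~(ii) of Proposition~\ref{prop:zero-criterion} are mutually exclusive, since one requires $\tau(P)=1$ and the other $\tau(P)=0$, so the two counts simply add.

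For case~(i): $\tau$ is a nonzero linear functional on $\F_2^6$, so exactly $32$ values of $P$ have $\tau(P)=1$. With $Y$ free, this gives $32\cdot64=2048$ pairs.

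For case~(ii), I first count the $P=(p_0,p)$ with $\tau(P)=0$ and $\wt(r)=2$, where $r=p+p_0\one$.
\begin{itemize}
\item If $p_0=0$, then $r=p$. The condition $\tau(P)=0$ says $\wt(p)$ is even, which holds automatically when $\wt(p)=2$. This gives $\binom52=10$ choices.
\item If $p_0=1$, then $p=r+\one$ has weight $3$, which is odd. So $\tau(P)=1+1=0$ holds, and again there are $10$ choices.
\end{itemize}
So $20$ values of $P$ qualify, and each has $r\neq0$.

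Next, for such a $P$, I count the $Y=(y_0,y)$ with $\tau(Y)=1$ and $r\cdot y=1+p_0$. Since $r\neq0$, the map $y\mapsto r\cdot y$ is a nonzero functional on $\F_2^5$, so $16$ vectors $y$ satisfy the second condition. For each such $y$, the condition $\tau(Y)=1$ determines $y_0$ uniquely. Case~(ii) therefore contributes $20\cdot16=320$ pairs.

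Adding the two cases, $2048+320=2368$ characters in $\Irr_A(G)$ have a zero on $C$. Hence $|N_A(G)|=4096-2368=1728$. This figure $2368$ also matches Corollary~\ref{cor:criterion}, since every correspondent is linear.

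The real difficulty was already handled in Proposition~\ref{prop:zero-criterion}, which reduces vanishing to these explicit affine conditions. What remains is bookkeeping. The one point I would check carefully is the parity claim in case~(ii): that the weight-$2$ vectors $r$ are counted correctly for both values of $p_0$, and that $\tau(P)=0$ imposes no further restriction beyond what I used above.
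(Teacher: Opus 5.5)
Your proof is correct and follows the paper's argument: the same $(P,Y)$ reparametrization via Proposition~\ref{prop:zero-criterion}, the same $32$ values of $P$ with $\tau(P)=1$, and the same $20$ values of $P$ with $\wt(r)=2$, each admitting $16$ vanishing choices of $Y$. The only difference is that you count the $2368$ vanishing characters and subtract from $4096$, whereas the paper sums the zero-free ones directly as $128+640+960$. Your route therefore never needs the $\wt(r)\in\{0,4\}$ cases.
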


\begin{proof}
The second equality is Theorem~\ref{thm:counterexample}.  For the
first, count the $u=(X,Y)$ failing both conditions of
Proposition~\ref{prop:zero-criterion}.  The substitution
$(X,Y)\mapsto(P,Y)$ with $P=X+Y$ is a bijection of
$\F_2^6\times\F_2^6$, so we may count pairs $(P,Y)$.

Since $\tau$ is a nonzero linear functional on $\F_2^6$, there are $32$
choices of $P$ with $\tau(P)=1$; for these, all $64$ choices of $Y$
give a character with a zero, contributing nothing.

Let $\tau(P)=0$, the other $32$ choices.  The map
$(p_0,p)\mapsto(p_0,r)$ with $r=p+p_0\one$ is a bijection from these
onto the pairs with $p_0\in\F_2$ and $r\in\F_2^5$ of even weight, of
which there are $2\cdot1=2$ with $\wt(r)=0$, $2\binom52=20$ with
$\wt(r)=2$, and $2\binom54=10$ with $\wt(r)=4$; note $2+20+10=32$.

If $\wt(r)\in\{0,4\}$, condition (ii) fails for every $Y$, so all $64$
choices of $Y$ are zero-free.  If $\wt(r)=2$, vanishing is equivalent
to the two affine conditions
\[
 \tau(Y)=1,\qquad r\cdot y=1+p_0 .
\]
The functionals $Y\mapsto\tau(Y)$ and $Y\mapsto r\cdot y$ on $\F_2^6$
are linearly independent, since the first has a nonzero $y_0$
coefficient, the second has none, and $r\neq0$.  So exactly
$2^{6-2}=16$ of the $64$ choices of $Y$ give a zero and $48$ are
zero-free.  Altogether
\[
 |N_A(G)|=\underbrace{2\cdot64}_{\wt(r)=0}
 +\underbrace{10\cdot64}_{\wt(r)=4}
 +\underbrace{20\cdot48}_{\wt(r)=2}
 =128+640+960=1728 .
\]
\end{proof}

This proves Theorem~\ref{thm:main}.  Note $1728=12^3$ and $4096=16^3$,
so the ratio is $(3/4)^3=27/64$.

\section{Consequences and questions}\label{sec:consequences}

\begin{proof}[Proof of Corollary~\ref{cor:criterion}]
By Theorem~\ref{thm:counterexample}, $C\cong C_2^{12}$ is abelian, so
every irreducible character of $C$ is linear; in particular $\chi^*$ is
linear for all $\chi\in\Irr_A(G)$.  By Theorem~\ref{thm:count} exactly
$4096-1728=2368$ of these $\chi$ have a zero on $C$.
\end{proof}

So the proposed criterion of Problem 21.100 fails in the
direction ``$\chi^*$ linear $\Rightarrow$ $\chi_C$ nonvanishing''.  The
converse direction is vacuous here.  What the example shows is that the
property ``$\chi_C$ is nonvanishing'' is not a function of $\chi^*$
alone.

\begin{proof}[Proof of Corollary~\ref{cor:powers}]
Let $A$, $G$, $C$ be as in Theorem~\ref{thm:main} and let $A$ act
diagonally on $G^n$, so that $C_{G^n}(A)=C^n$ is abelian of order
$4096^n$.  Every irreducible character of $G^n$ is
$\chi_1\otimes\dots\otimes\chi_n$ with $\chi_i\in\Irr(G)$, uniquely, and
this is $A$-invariant if and only if each $\chi_i$ is; moreover its
value at $(c_1,\dots,c_n)\in C^n$ is $\prod_i\chi_i(c_i)$, which is
nonzero for all such tuples if and only if each $\chi_i$ lies in
$N_A(G)$.  Hence
\[
 \frac{|N_A(G^n)|}{|C^n/(C^n)'|}
 =\left(\frac{1728}{4096}\right)^{\!n}
 =\left(\frac{27}{64}\right)^{\!n}
 \xrightarrow[n\to\infty]{}0 ,
\]
and $|A|=21$ remains coprime to $|G^n|=2^{135n}$.
\end{proof}

\subsection*{Head characters}

The same question is raised, and the same
correspondence studied, by Isaacs \cite{isaacs-carter}.  For $G$
solvable and $A$ acting coprimely, Isaacs calls $\chi\in\Irr_A(G)$ an
\emph{$A$-head character} if $\chi$ is the head character of some strong
$A$-pair series, a condition formulated purely in terms of
$A$-composition series of $G$, and proves in
\cite[Theorem 7.1]{isaacs-carter} that the $A$-head characters of $G$
are exactly the $\chi\in\Irr_A(G)$ whose Glauberman--Isaacs
correspondent is linear.  He closes \cite[\S7]{isaacs-carter} by
recording Navarro's question in the form of Problem 6.3.  Combining his
theorem with the exact count of Section~\ref{sec:count} answers it in
his own terms.

\begin{corollary}\label{cor:head}
Let $A$, $G$, $C$ be as in Theorem~\ref{thm:main}.  Then $G$ is
solvable, all $4096$ characters in $\Irr_A(G)$ are $A$-head characters
of $G$, and exactly $1728$ of them are nonvanishing on $C$.  In
particular the $A$-head characters of $G$ are not the $A$-invariant
irreducible characters that are nonvanishing on $C$.
\end{corollary}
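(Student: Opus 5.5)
Solvability of $G$ is immediate, since $G=B\rtimes V$ is a $2$-group of order $2^{135}$ and hence nilpotent. Likewise $A\cong C_{21}$ is solvable, so we are in the setting of \cite{isaacs-carter}. The last two clauses reduce to results already in hand once we know every invariant character is an $A$-head character. Indeed, Theorem~\ref{thm:count} gives exactly $1728$ characters of $\Irr_A(G)$ that are nonvanishing on $C$. Proposition~\ref{prop:invariant-irr} gives $|\Irr_A(G)|=4096$. Since $1728<4096$, the set of head characters, namely all of $\Irr_A(G)$, cannot coincide with $N_A(G)$.

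For the head-character claim I would not touch $A$-pair series directly. Instead I would invoke \cite[Theorem 7.1]{isaacs-carter}: for solvable $G$ under a coprime action, $\chi\in\Irr_A(G)$ is an $A$-head character exactly when its Glauberman--Isaacs correspondent $\chi^*$ is linear. By Theorem~\ref{thm:counterexample}, $C=C_G(A)\cong C_2^{12}$ is abelian, so every $\chi^*\in\Irr(C)$ is linear. Hence all $4096$ characters in $\Irr_A(G)$ are $A$-head characters.

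The only step needing care is checking that the hypotheses of Isaacs' theorem match our situation exactly. Specifically, I would confirm that his correspondence is the Glauberman correspondence, which it is since $A$ is solvable and so the Glauberman and Isaacs constructions agree, and that his definition of $A$-head character applies to an arbitrary coprime $A$ acting on solvable $G$. No further computation is required; the corollary is then a direct assembly of Theorem~\ref{thm:counterexample}, Theorem~\ref{thm:count}, and \cite[Theorem 7.1]{isaacs-carter}.
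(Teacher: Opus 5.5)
Your proposal is correct and follows the paper's proof: $G$ is a $2$-group, hence solvable, so \cite[Theorem 7.1]{isaacs-carter} identifies the $A$-head characters with the $\chi\in\Irr_A(G)$ satisfying $\chi^*(1)=1$. That is all $4096$ of them because $C\cong C_2^{12}$ is abelian, while Theorem~\ref{thm:count} gives $|N_A(G)|=1728$. Your side remark that the Glauberman and Isaacs constructions agree is not needed, since with $C$ abelian every element of $\Irr(C)$ is linear whichever bijection is used (and Isaacs' construction requires $|G|$ odd, so here only Glauberman's applies).
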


\begin{proof}
$G$ is a $2$-group, hence solvable, and $|A|=21$ is coprime to $|G|$, so
\cite[Theorem 7.1]{isaacs-carter} applies.  By
Theorem~\ref{thm:counterexample}, $C$ is abelian, so $\chi^*(1)=1$ for
every $\chi\in\Irr_A(G)$, and all $4096$ of them are $A$-head characters.
The exact count of Section~\ref{sec:count} gives $|N_A(G)|=1728$.
\end{proof}

Several natural questions remain.

\emph{Minimality.} Our $G$ has order $2^{135}$.  Within the family of
Section~\ref{sec:family}, Theorem~\ref{thm:dim-six} and
Corollary~\ref{cor:minima} show that this is the least attainable
order over \emph{all} operator groups of odd order, that $\dim V=7$ is
the least attainable dimension, and that $|A|=15$ is the least
attainable operator order (at $\dim V=8$).  We do not know the
smallest counterexample to Problem 21.100 overall: the family consists
of very particular groups, and nothing here excludes smaller
counterexamples of a different shape.

\emph{Inequality.} Is $|N_A(G)|\le|C/C'|$ always true?  It holds in our
examples, and holds trivially whenever $C$ is abelian, since then
$|C/C'|=|\Irr(C)|=|\Irr_A(G)|\ge|N_A(G)|$.  We know of no general
argument.

\emph{Repair.} Is there a characterization of $N_A(G)$ in terms of the
Glauberman--Isaacs correspondence at all?  Our example rules out the
linearity criterion, but leaves open whether some other invariant of
$\chi^*$ detects nonvanishing on $C$.

\emph{Nonabelian $C$.} The fixed subgroup in Theorem~\ref{thm:main} is
elementary abelian, but this is not essential.

\begin{proposition}\label{prop:nonabelian-C}
Let $A$, $G$, $C$ be as in Theorem~\ref{thm:main}, let $Q$ be any
nonabelian $2$-group, and let $A$ act trivially on $Q$.  Then $A$ acts
coprimely on $G\times Q$ with nonabelian fixed subgroup
$C_{G\times Q}(A)=C\times Q$, and
\[
 |N_A(G\times Q)|=1728\,|Q/Q'|
 <4096\,|Q/Q'|=|(C\times Q)/(C\times Q)'| .
\]
Problem 21.100 and Problem 6.3 therefore fail with $C$ nonabelian.
\end{proposition}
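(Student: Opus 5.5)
The plan is to reduce everything to the direct-product structure and the trivial action on $Q$. First, the action: $A$ acts on $G\times Q$ componentwise, via the action of Theorem~\ref{thm:main} on $G$ and trivially on $Q$. This is an action by automorphisms. Since $|A|=21$ is odd and $|G\times Q|$ is a power of $2$, it is coprime. The fixed subgroup is
\[
C_{G\times Q}(A)=C_G(A)\times C_Q(A)=C\times Q,
\]
which is nonabelian because $Q$ is. Its derived subgroup is $(C\times Q)'=C'\times Q'=1\times Q'$, because $C$ is abelian (Theorem~\ref{thm:counterexample}). Hence
\[
|(C\times Q)/(C\times Q)'|=|C/C'|\,|Q/Q'|=4096\,|Q/Q'|.
\]

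Next, the irreducible characters. Every $\psi\in\Irr(G\times Q)$ is uniquely $\chi\times\eta$ with $\chi\in\Irr(G)$ and $\eta\in\Irr(Q)$. For $a\in A$ we have $(\chi\times\eta)^a=\chi^a\times\eta$, since $A$ fixes $Q$ pointwise. By uniqueness of the factorization, $\psi$ is $A$-invariant exactly when $\chi\in\Irr_A(G)$, with $\eta$ arbitrary. On $C\times Q$ the value is $\psi(c,x)=\chi(c)\eta(x)$. This is nonzero for every $(c,x)$ if and only if $\chi$ has no zero on $C$ and $\eta$ has no zero on $Q$; to see this, fix a zero of one factor and let the other coordinate be $1$, where the other factor's value is its degree and so nonzero.

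Then apply Burnside's theorem (cited in the introduction, \cite[Theorem 3.15]{isaacs-book}): the nowhere-vanishing $\eta\in\Irr(Q)$ are exactly the linear ones, and there are $|Q/Q'|$ of them. With Theorem~\ref{thm:count} this gives
\[
|N_A(G\times Q)|=|N_A(G)|\cdot|Q/Q'|=1728\,|Q/Q'|,
\]
which is strictly less than $4096\,|Q/Q'|$. So the equality of Problem 21.100 fails with $C\times Q$ nonabelian.

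For Problem 6.3 I would exhibit a character with linear Glauberman--Isaacs correspondent that has a zero on the fixed subgroup. Take $\chi_{\delta_0}\times 1_Q$. It vanishes at $(c,1)$ for the $c$ of Theorem~\ref{thm:counterexample}. Its correspondent should be $\chi_{\delta_0}^*\times 1_Q$, which is linear. This step is the main obstacle, because it needs the correspondence to be compatible with direct products and to be the identity when the action is trivial. I would cite \cite[Chapter 2]{navarro-book} for these properties. A count alone would not suffice: all $|\Irr(C\times Q)|$ invariant characters are matched, but $Q$ has nonlinear characters, so linearity of the correspondents cannot be read off from numbers.

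If citing these properties is uncomfortable, a fallback uses the fact that $A$ is solvable, so the Glauberman correspondence is available. It is characterized by $\chi_C$ having $\chi^*$ as its unique constituent of multiplicity prime to $|A|$ (for $p$-groups $A$, iterated through a series otherwise). Under the trivial action on $Q$, this characterization visibly factors through the product $\chi\times\eta$.
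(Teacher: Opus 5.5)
Everything up to and including the count $|N_A(G\times Q)|=1728\,|Q/Q'|$ matches the paper's proof: the componentwise action, $C_{G\times Q}(A)=C\times Q$ with $(C\times Q)'=1\times Q'$, the factorization of $\Irr(G\times Q)$, the nonvanishing criterion for $\chi\times\eta$, and Burnside's theorem. The two arguments differ only on Problem 6.3, and there your remark that ``a count alone would not suffice'' is mistaken, because the paper's argument is a count.

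Since ${}^*$ is a bijection $\Irr_A(G\times Q)\to\Irr(C\times Q)$, exactly $|(C\times Q)/(C\times Q)'|=4096\,|Q/Q'|$ invariant characters have linear correspondent, whichever ones they are. If the equivalence of Problem 6.3 held, this set would equal $N_A(G\times Q)$, which has only $1728\,|Q/Q'|$ elements. So the equivalence fails. The size gap also shows that at least $2368\,|Q/Q'|$ characters with linear correspondent vanish somewhere on $C\times Q$, so the direction ``$\psi^*$ linear $\Rightarrow$ nonvanishing'' fails specifically. You need to know how many correspondents are linear, not which ones.

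Your explicit-witness route is still valid. The identity $(\chi\times\eta)^*=\chi^*\times\eta$ under the trivial action on $Q$ follows from Glauberman's characterization. At a $p$-group step, the constituents of $\chi_{C}\times\eta$ are the $\xi\times\eta$, with multiplicity $[\chi_C,\xi]$. For $A\cong C_{21}$ you iterate through $C_7\le A$, and $Q$ stays pointwise fixed at each stage.

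That route gives a concrete witness, $\chi_{\delta_0}\times 1_Q$. Pushed further, it identifies the linear-correspondent characters exactly as those $\chi\times\eta$ with $\chi\in\Irr_A(G)$ and $\eta\in\operatorname{Lin}(Q)$. The cost is dependence on a structural property of the correspondence. The paper avoids this entirely and uses only that ${}^*$ is a bijection.
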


\begin{proof}
Coprimality is clear since $|G\times Q|$ is a power of $2$, and
$C_{G\times Q}(A)=C\times Q$ because $A$ is trivial on $Q$.  Every
irreducible character of $G\times Q$ is $\chi_1\otimes\chi_2$ uniquely,
and it is $A$-invariant exactly when $\chi_1$ is.  Its value at
$(c,q)$ is $\chi_1(c)\chi_2(q)$, so it is nonvanishing on $C\times Q$
exactly when $\chi_1\in N_A(G)$ and $\chi_2$ is nonvanishing on $Q$; by
Burnside's theorem the latter says $\chi_2$ is linear.  Hence
$|N_A(G\times Q)|=|N_A(G)|\,|Q/Q'|=1728|Q/Q'|$, while
$(C\times Q)'=Q'$ as $C$ is abelian, so
$|(C\times Q)/(C\times Q)'|=|C|\,|Q/Q'|=4096|Q/Q'|$.  For Problem 6.3,
the Glauberman--Isaacs correspondence is a bijection
$\Irr_A(G\times Q)\to\Irr(C\times Q)$, so the characters with linear
correspondent number $|(C\times Q)/(C\times Q)'|=4096|Q/Q'|$, which
differs from $1728|Q/Q'|$.
\end{proof}

This still refutes Problem 6.3 in the direction
``$\chi^*$ linear $\Rightarrow$ $\chi_C$ nonvanishing''.  The opposite
direction, which for $A=1$ is exactly Burnside's theorem, is untouched
by our examples, and we do not know whether it can fail.

\section{The Carter subgroup analogue}\label{sec:carter}

Problem 6.7 of \cite{navarro-problems} asks whether the number of
irreducible characters of a solvable group $\Gamma$ that do not vanish
on a Carter subgroup $K$ is $|K/K'|$.  By Theorem A of
\cite{isaacs-carter} the head characters of $\Gamma$ number exactly
$|K/K'|$, so the question is equivalently whether the head characters of
$\Gamma$ are exactly the Carter-nonvanishing ones; Isaacs raises it in
\cite[\S6]{isaacs-carter}, attributes the affirmative guess to Navarro,
reports ``abundant computational evidence'' for it, and proves it when
$K$ is a maximal subgroup \cite[Theorem 6.1]{isaacs-carter}.

The subgroup $C$ of Theorem~\ref{thm:counterexample} is not a Carter
subgroup of $G$: it lies in the abelian group $B$, so
$B\le C_G(C)\le N_G(C)$ while $|B|=2^{128}$ and $|C|=2^{12}$.  Passing to
$\Gamma=G\rtimes A$ repairs this.

\begin{proposition}\label{prop:carter}
Let $A$, $G$, $C$ be as in Theorem~\ref{thm:main} and put
$\Gamma=G\rtimes A$, a solvable group of order $2^{135}\cdot21$.  Then
$K=C\times A$ is a Carter subgroup of $\Gamma$, abelian of order
$4096\cdot21=86016$.
\end{proposition}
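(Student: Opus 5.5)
We must show $K=C\times A$ is nilpotent and self-normalizing in $\Gamma$. Solvability and the order are immediate: $G$ is a $2$-group and $A\cong C_{21}$ is cyclic, so $\Gamma$ is an extension of a $2$-group by a cyclic group, and $|\Gamma|=|G|\,|A|=2^{135}\cdot21$.

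For nilpotency, note that $C=C_G(A)$ commutes elementwise with $A$ by definition. Hence the subgroup $CA$ of $\Gamma$ is the internal direct product $C\times A$, because $C\cap A\le G\cap A=1$. It is abelian, since $C\cong C_2^{12}$ by Proposition~\ref{prop:fixed-subgroup} and $A$ is cyclic. Its order is $4096\cdot21=86016$, and being abelian it is nilpotent.

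The main step is self-normalization, $N_\Gamma(K)=K$. Every element of $\Gamma$ can be written $ga$ with $g\in G$ and $a\in A$, and $A\le K$, so $N_\Gamma(K)=N_G(K)\,A$. It therefore suffices to show $N_G(K)=C$.

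Take $g\in N_G(K)$. Then $A^g\le K$. Since $A$ is the unique Hall $\{3,7\}$-subgroup of the abelian group $K$, we get $A^g=A$. For $a\in A$, the commutator $[g,a]=g^{-1}g^{a}$ lies in $G$ because $G\trianglelefteq\Gamma$. It also lies in $A$, since $g^{-1}a^{-1}g\cdot a\in A$. As $G\cap A=1$, this gives $[g,a]=1$. Thus $g\in C_G(A)=C$, so $N_G(K)=C$ and $N_\Gamma(K)=CA=K$.

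An alternative is the standard fact $N_G(A)=C_G(A)$ for coprime actions, but the direct commutator argument above is self-contained. The only point needing care is the reduction $N_\Gamma(K)=N_G(K)A$ and the observation that $A$ is characteristic in $K$, which holds because it is the Hall $2'$-part of $K$.

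In summary, $K$ is nilpotent and self-normalizing, hence a Carter subgroup of $\Gamma$.
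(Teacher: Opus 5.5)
Your proof is correct and follows essentially the same route as the paper: $K$ is abelian, its Hall $2'$-subgroup $A$ is characteristic in $K$, so anything normalizing $K$ normalizes $A$, and an element $g\in G$ normalizing $A$ has $[g,a]\in A\cap G=1$, which forces $g\in C_G(A)=C$. The only cosmetic difference is that you first reduce to $N_\Gamma(K)=N_G(K)A$, whereas the paper goes through $N_\Gamma(K)\le N_\Gamma(A)=C_G(A)A=K$.
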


\begin{proof}
$A$ centralizes $C$, and $|C|$ is a power of $2$ while $|A|=21$ is odd,
so $K=C\times A$ is abelian, in particular nilpotent.  Its Hall
$2'$-subgroup $A$ is characteristic in $K$, so
$N_\Gamma(K)\le N_\Gamma(A)$.  If $g\in G$ normalizes $A$ then
$gag^{-1}a^{-1}\in A\cap G=1$ for every $a\in A$, so $g\in C_G(A)=C$;
hence $N_\Gamma(A)=C_G(A)A=K$, and $K$ is self-normalizing.
\end{proof}

We now compute the characters of $\Gamma$ that are nonvanishing on $K$.
Write $\Gamma=B\rtimes(V\rtimes A)$ and, for $a\in A$ acting on $V$ as
$T^k$, put
\[
 S_k=\{t\in V:(T^k+1)t\in V_u\},
\]
a $T$-invariant subgroup of $V$ containing $V_u$.

\begin{lemma}\label{lem:gamma-values}
Let $u\in B^A$ and $\theta=\chi_u$.  Then $\theta$ extends to $\Gamma$,
the irreducible characters of $\Gamma$ lying over $\theta$ are the $21$
characters $\chi_{u,\psi}$ with $\psi\in\Irr(A)$, and for $c\in C$ and
$a\in A$ acting as $T^k$,
\[
 \chi_{u,\psi}(ca)=\psi(a)\sum_{t\in S_k/V_u}(-1)^{\langle\tau_tu,c\rangle}.
\]
In particular whether $\chi_{u,\psi}$ vanishes at $ca$ does not depend on
$\psi$.
\end{lemma}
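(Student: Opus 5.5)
Fix $u\in B^A$ and let $I_u=B\rtimes V_u$. Since $V_u$ is $A$-invariant (Lemma~\ref{lem:fixed-representative}), the subgroup $\widehat I_u=B\rtimes(V_u\rtimes A)$ of $\Gamma$ is well defined. The plan is to extend $\widetilde\lambda_u$ from $I_u$ to $\widehat I_u$ and then induce.

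\textbf{Step 1: the extension.} Define $\widehat\lambda(f,t,a)=\lambda_u(f)$ for $f\in B$, $t\in V_u$, $a\in A$. This is a homomorphism: since $u\in B^A$ and the pairing is $A$-invariant, $\lambda_u(a\cdot f)=\lambda_u(f)$; since $\tau_tu=u$ for $t\in V_u$, also $\lambda_u(\tau_tf)=\lambda_u(f)$. Put $\widehat\theta=\Ind_{\widehat I_u}^\Gamma\widehat\lambda$. Because $[\Gamma:\widehat I_u]=[G:I_u]$ and $\Gamma=G\widehat I_u$ with $G\cap\widehat I_u=I_u$, Mackey's formula shows that $\widehat\theta$ restricts to $\Ind_{I_u}^G\widetilde\lambda_u=\theta$. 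So $\theta$ extends to $\Gamma$.

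\textbf{Step 2: the characters over $\theta$.} Since $\Gamma/G\cong A$ is abelian, Gallagher's theorem says the irreducible characters of $\Gamma$ lying over $\theta$ are exactly the $\widehat\theta\psi$ with $\psi\in\Irr(A)$ inflated to $\Gamma$. These are $|A|=21$ distinct characters; set $\chi_{u,\psi}=\widehat\theta\psi$. Distinctness can be checked directly, for instance by the inner product against $\widehat\theta$, or simply cited from Gallagher.

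\textbf{Step 3: the value formula.} The only real computation is $\widehat\theta(ca)$. We have $\chi_{u,\psi}(ca)=\psi(a)\widehat\theta(ca)$, so it suffices to compute $\widehat\theta(ca)$. The set $\widehat I_u$ is normal in $\Gamma$, since it is the preimage of $V_u\rtimes A$ and $V_u$ is $T$-invariant. Take coset representatives $x=(0,t)$ with $t\in V/V_u$. Then
\[
\widehat\theta(ca)=\sum_{t}\widehat\lambda\bigl(x^{-1}(ca)x\bigr).
\]
Compute $(0,t)^{-1}\,c\,a\,(0,t)=(\tau_tc,0)\cdot a(0,t)a^{-1}\cdot(0,t)^{-1}\cdot a$ in a suitable order. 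This gives the element $\bigl(\tau_tc,\;T^kt+t\bigr)$ times $a$, up to translation bookkeeping. That element lies in $\widehat I_u$ exactly when $(T^k+1)t\in V_u$, that is, when $t\in S_k$. Its $\widehat\lambda$-value is then $\lambda_u(\tau_tc)=(-1)^{\langle\tau_tu,c\rangle}$, since the $B$-part of the conjugate is $\tau_{\pm t}c$ and $c\in B^A$.

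Summing over $t\in S_k/V_u$ yields the claimed formula. The summand is well defined on these cosets because $\tau_su=u$ for $s\in V_u$.

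\textbf{Main obstacle.} The delicate part is the careful multiplication in $B\rtimes(V\rtimes A)$: getting the exact $B$-component of the conjugate and checking that the sign conventions ($\tau_t$ versus $\tau_{-t}$, which coincide in characteristic $2$) are right. Once that is settled, the final remark is immediate: $\psi(a)$ is a root of unity, so whether $\chi_{u,\psi}(ca)$ vanishes is independent of $\psi$.
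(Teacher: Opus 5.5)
Your argument is correct apart from one false but unused assertion (second paragraph). Your route to the first two claims differs mildly from the paper's.

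\textbf{Comparison with the paper.} The paper defines $\chi_{u,\psi}=\Ind_{B\rtimes W_u}^\Gamma\eta_\psi$ with $\eta_\psi=\lambda_u\cdot\psi$. It gets irreducibility and distinctness from the Clifford correspondence, because $B\rtimes(V_u\rtimes A)$ is the inertia group of $\lambda_u$. It then uses extendibility over the cyclic quotient $\Gamma/G$ (Isaacs, Corollary~11.22) together with Gallagher only to count that these $21$ characters exhaust those over $\theta$.

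You instead exhibit the extension $\widehat\theta=\Ind\widehat\lambda$ and check $\widehat\theta_G=\theta$ by Mackey. This is valid, since $G\trianglelefteq\Gamma$, $G\widehat I_u=\Gamma$ and $G\cap\widehat I_u=I_u$. Gallagher then hands you the list $\widehat\theta\psi$ directly. Because $\widehat\theta\psi=\Ind(\widehat\lambda\cdot\psi_{\widehat I_u})=\Ind\eta_\psi$, the two families coincide. Your version is more self-contained, with no appeal to the inertia group or to Corollary~11.22, and it makes the factor $\psi(a)$ in the value formula automatic. The paper's version avoids the restriction check.

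The value computation is the same as the paper's. Your conjugate $(\tau_tc,(1+T^k)t)\,a$ agrees with the paper's $x_t^{-1}(c,(0,a))x_t=(\tau_tc,((1+T^k)t,a))$.

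\textbf{The false assertion.} You claim that $\widehat I_u$ is normal in $\Gamma$. $T$-invariance of $V_u$ only makes $V_u\rtimes A$ a subgroup. In fact $(0,t)^{-1}a(0,t)=(0,(1+T^k)t)\,a$, so any normal subgroup of $V\rtimes A$ containing $A$ contains $[V,A]$. Since $C_V(A)=0$ gives $[V,A]=V$, the subgroup $V_u\rtimes A$ is normal only when $V_u=V$, that is, when $u\in\{0,\one\}$.

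Your own next step contradicts the claim. If $\widehat I_u$ were normal, every conjugate of $ca\in\widehat I_u$ would lie in $\widehat I_u$, and the restriction of the sum to $S_k$ would be vacuous.

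Simply delete that sentence. The formula $\Ind_H^\Gamma\phi(g)=\sum_x\phi^\circ(x^{-1}gx)$, summed over a left transversal with $\phi^\circ$ zero off $H$, holds for any subgroup $H$. That formula, plus the fact that the elements $(0,t)$ with $t$ running over $V/V_u$ form a transversal, is all your computation actually uses. The corresponding normality $I_u\trianglelefteq G$ in Lemma~\ref{lem:value-formula} is true, which is presumably where the claim came from.
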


\begin{proof}
Let $W=V\rtimes A$, so $\Gamma=B\rtimes W$.  As $u\in B^A$ and $V_u$ is
$T$-invariant, the stabilizer of $\lambda_u$ in $W$ is
$W_u=V_u\rtimes A$.  For $\psi\in\Irr(A)$ set
$\eta_\psi(f,(s,a'))=\lambda_u(f)\psi(a')$ on $B\rtimes W_u$; this is a
linear character, because for $s\in V_u$ and $a'$ acting as $T^j$,
\[
 \langle u,\tau_s(g\circ T^{-j})\rangle
 =\langle\tau_su,g\circ T^{-j}\rangle
 =\langle u\circ T^{j},g\rangle=\langle u,g\rangle .
\]
Since $B\rtimes W_u$ is the inertia group of $\lambda_u$ in $\Gamma$,
the induced characters $\chi_{u,\psi}=\Ind_{B\rtimes W_u}^\Gamma\eta_\psi$
are irreducible and distinct, and they lie over $\theta$; there are
$21$ of them.  Since $\Gamma/G$ is cyclic, $\theta$ extends to
$\Gamma$ \cite[Corollary 11.22]{isaacs-book}, so by Gallagher's
theorem \cite[Corollary 6.17]{isaacs-book} there are exactly $21$
irreducible characters over $\theta$, and these are all of them.

Take $x_t=(0,(t,1))$ for $t\in V/V_u$ as a transversal of
$\Gamma/(B\rtimes W_u)$.  A direct computation gives
\[
 x_t^{-1}(c,(0,a))x_t=\bigl(\tau_tc,((1+T^k)t,a)\bigr),
\]
which lies in $B\rtimes W_u$ exactly when $(1+T^k)t\in V_u$, that is,
when $t\in S_k$.  For such $t$ the value of $\eta_\psi$ is
$\lambda_u(\tau_tc)\psi(a)=(-1)^{\langle\tau_tu,c\rangle}\psi(a)$, and
the induced-character formula gives the stated sum.
\end{proof}

\begin{lemma}\label{lem:k-nonzero}
Let $u\in B^A$ and let $k\not\equiv0\pmod{21}$.  Then
$\sum_{t\in S_k/V_u}(-1)^{\langle\tau_tu,c\rangle}\neq0$ for every
$c\in B^A$, provided $\chi_u\in N_A(G)$.
\end{lemma}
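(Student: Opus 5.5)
The plan is to compute the subgroup $S_k$ explicitly for each residue class of $k$ modulo $21$. The key input is the decomposition $V=U\oplus W$, on which $T^k$ acts as $(\omega^k,\omega^k,\zeta^k)$. Throughout, the summand $(-1)^{\langle\tau_tu,c\rangle}$ is constant on cosets of $V_u$ (as in the proof of Lemma~\ref{lem:value-formula}). Hence
\[
 \sum_{t\in S_k/V_u}(-1)^{\langle\tau_tu,c\rangle}
 =\frac1{|V_u|}\sum_{t\in S_k}(-1)^{\langle\tau_tu,c\rangle},
\]
and it suffices to show that the full sum over $S_k$ is nonzero.

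Since $k\not\equiv0\pmod{21}$, there are three cases.

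\emph{Case $3\nmid k$ and $7\nmid k$.} Then $\omega^k\neq1$ and $\zeta^k\neq1$, so $T^k+1$ is invertible on $V$. Because $V_u$ is $T$-invariant, $T^k+1$ maps $V_u$ injectively, hence bijectively, into itself. Therefore $S_k=V_u$ and the sum is a single sign $\pm1\neq0$.

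\emph{Case $3\mid k$, $7\nmid k$.} Here $T^k+1$ is zero on $U$ and invertible on $W$. The key structural point is that $V_u$, being an $\F_2[A]$-submodule, splits as $(V_u\cap U)\oplus(V_u\cap W)$. This holds because $U$ and $W$ have no common simple constituents (the generator acts with orders $3$ and $7$), so every submodule is the sum of its intersections with the isotypic components. Then $(T^k+1)(s,v)\in V_u$ forces $v\in V_u\cap W$ by invertibility on $W$, so $S_k=U+V_u$. The sum over $S_k$ is then a positive multiple of $\sum_{s\in U}(-1)^{H_0(s)}=S(H_0)$, where $H_0$ is the radial function of Section~\ref{sec:radial}. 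Now $S(H_0)=a+3\sum_{i=1}^5b_i$ with all signs $\pm1$. Since $\sum b_i$ is odd, $3\sum b_i\in\{\pm3,\pm9,\pm15\}$, which can never equal $\mp1$. So $S(H_0)\neq0$.

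\emph{Case $7\mid k$, $3\nmid k$.} Symmetrically, $S_k=W+V_u$, and the sum over $S_k$ is a positive multiple of $\sum_{v\in W}(-1)^{h_{u,c}(0,v)}$. By Lemma~\ref{lem:orbit-constant}, $h_{u,c}$ is constant on $W^\times$, so this sum equals $(-1)^{H_0(0)}+7(-1)^{H_1(0)}\in\{\pm6,\pm8\}$, which is nonzero.

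The main obstacle is justifying the splitting $V_u=(V_u\cap U)\oplus(V_u\cap W)$, and hence identifying $S_k$ correctly. This rests on semisimplicity of $\F_2[A]$ and on the two isotypic pieces being non-isomorphic, which I would verify via the orders of $T$ on each piece. It also appears that the argument never uses the hypothesis $\chi_u\in N_A(G)$. In every case the sum is a nonzero multiple of $\pm1$, $S(H_0)$, or $(-1)^{H_0(0)}+7(-1)^{H_1(0)}$, so the hypothesis is likely only there for the later application, namely the case $k\equiv0$ where $S_0=V$ and the sum is $\chi_u(c)$ itself. I would double-check that the parity argument for $S(H_0)$ is unconditional before writing this up.
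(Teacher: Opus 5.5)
Your proof is correct. For $3\mid k$ and $7\mid k$ it takes a different route from the paper.

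\textbf{The paper's route.} It pins down $S_k$ subcase by subcase: $S_k=U\oplus(V_u\cap W)$ with $V_u\cap W\in\{0,W\}$, or $S_k=(V_u\cap U)\oplus W$ with $V_u\cap U\in\{0,L,U\}$. When $S_k=V$ it invokes the hypothesis $\chi_u\in N_A(G)$. Otherwise it checks by orbit-size arithmetic that no signed sum of the orbit sizes in $S_k$ vanishes; these sizes are $1,3^{(5)}$, then $1,7$, then $1,3,7,21$.

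\textbf{Your route.} You use $S_k=U+V_u$ (resp.\ $W+V_u$) and the $V_u$-periodicity of the summand to collapse the sum onto $U$ (resp.\ $W$). The one step you should write out is the fiber count: each coset of $V_u$ in $U+V_u$ meets $U$ in a coset of $U\cap V_u$. Hence the sum in the lemma equals $S(H_0)/|U\cap V_u|$, resp.\ $\bigl((-1)^{H_0(0)}+7(-1)^{H_1(0)}\bigr)/|W\cap V_u|$.

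\textbf{What this buys.} The argument is uniform, with no subcases, and the conclusion is stronger: the hypothesis $\chi_u\in N_A(G)$ really is needed only at $k=0$. This agrees with Proposition~\ref{prop:zero-criterion}. Indeed $W\le V_u$ forces $X=Y$, and $U\le V_u$ forces $X,Y\in\{0,J\}$; both give $r=0$. The paper's route stays closer to the orbit-size subset-sum philosophy of the rest of the paper, but pays for it with five subcases and an unnecessary appeal to the hypothesis.

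\textbf{Two remarks on your write-up.}
\begin{enumerate}
\item Your worry about the mod-$3$ step is unfounded: $S(H_0)\equiv a\pmod 3$ unconditionally.
\item The splitting $V_u=(V_u\cap U)\oplus(V_u\cap W)$ that you flag as the main obstacle is true but not needed. The scalar $\zeta^k+1$ restricts to a bijection of the $T$-invariant subspace $V_u\cap W$, by the same injectivity argument as in your first case; that, not ``invertibility on $W$'' alone, is the actual reason $v\in V_u\cap W$. This gives $S_k=U\oplus(V_u\cap W)\subseteq U+V_u\subseteq S_k$. The last inclusion holds because $(T^k+1)V_u\subseteq V_u$ and $(T^k+1)U=0$. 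The case $7\mid k$ is symmetric.
\end{enumerate}
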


\begin{proof}
Because $S_k$ is $T$-invariant and $t\mapsto\langle\tau_tu,c\rangle$ is
constant on $A$-orbits by Lemma~\ref{lem:orbit-constant}, the sum is a
signed sum of the sizes of the $A$-orbits contained in $S_k$.  Let
$d=\gcd(k,21)$.

If $d=1$ then $\langle T^k\rangle=\langle T\rangle$, so $T^k$ has no
nonzero fixed point on $V/V_u$ by the argument in
Lemma~\ref{lem:fixed-representative}; hence $T^k+1$ is invertible there
and $S_k=V_u$.  As $\tau_tu=u$ for $t\in V_u$, the sum is
$(-1)^{\langle u,c\rangle}\neq0$.

If $d=3$ then $3\mid k$ and $7\nmid k$, so $T^k$ acts trivially on $U$
and as multiplication by $\zeta^k\neq1$ on $W$.  Thus $T^k+1$ vanishes
on $U$ and acts on $W$ as the nonzero $\F_8$-scalar $\zeta^k+1$.  Now
$V_u\cap W$ is a $\zeta$-invariant $\F_2$-subspace of $W\cong\F_8$,
hence an $\F_8$-subspace, so it is $0$ or $W$; either is preserved by
the scalar, and $S_k=U\oplus(V_u\cap W)$.  If $V_u\cap W=W$ then
$S_k=V$ and the sum is
$\chi_u(c)\neq0$ because $\chi_u\in N_A(G)$.  If it is $0$ then $S_k=U$,
whose $A$-orbits have sizes $1,3,3,3,3,3$; a signed sum
$\pm1+3(\pm1\pm1\pm1\pm1\pm1)$ has odd inner sum, so it lies in
$\{\pm1\}+\{\pm3,\pm9,\pm15\}$ and is never $0$.

If $d=7$ then $7\mid k$ and $3\nmid k$, so $T^k+1$ acts on $U$ as the
nonzero $\F_4$-scalar $\omega^k+1$ and vanishes on $W$.  Here
$V_u\cap U$ is an $\omega$-invariant $\F_2$-subspace of
$U\cong\F_4^2$, hence an $\F_4$-subspace --- $0$, a line $L$, or
$U$ --- and each is preserved by the scalar, giving
$S_k=(V_u\cap U)\oplus W$.  If $V_u\cap U=U$ then $S_k=V$ and the sum
is
$\chi_u(c)\neq0$ as before.  If it is $0$ then $S_k=W$, with orbit sizes
$1,7$, and $\pm1\pm7\neq0$.  If it is a line $L$ then $S_k=L\oplus W$,
with orbit sizes $1,3,7,21$ summing to $32$; no sub-multiset of
$\{1,3,7,21\}$ sums to $16$, so no signed sum vanishes.
\end{proof}

\begin{theorem}\label{thm:carter}
Let $\Gamma=G\rtimes A$ and $K=C\times A$ be as in
Proposition~\ref{prop:carter}.  The number of $\chi\in\Irr(\Gamma)$ that
do not vanish at any element of $K$ is
\[
 21\,|N_A(G)|=21\cdot1728=36288,
\]
whereas $|K/K'|=86016$.  Hence Problem 6.7 of \cite{navarro-problems}
has a negative answer.
\end{theorem}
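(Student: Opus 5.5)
The plan is to sort $\Irr(\Gamma)$ according to the $A$-orbit of constituents in $\Irr(G)$. Every $\xi\in\Irr(\Gamma)$ lies over some $\theta\in\Irr(G)$, and there are two cases. If $\theta$ is not $A$-invariant, then, since $|A|=21$ and the inertia group $I_\Gamma(\theta)=G\rtimes A_\theta$ is proper, $\xi$ is induced from $G\rtimes A_\theta$. For $a\in A\setminus\{1\}$ I will show that $ca$ is conjugate into $G\rtimes A_\theta$ only when $a$ lies in a conjugate of $A_\theta$. The cleaner route is this: $A$ is abelian and $\Gamma/G\cong A$, so every conjugate of $G\rtimes A_\theta$ equals $G\rtimes A_\theta$, a normal subgroup. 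Induced characters vanish off a normal subgroup, so $\xi$ vanishes at $ca$ for any $a\notin A_\theta$, for instance $c=1$ and $a$ a generator. Hence such $\xi$ always vanish somewhere on $K$.

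If $\theta\in\Irr_A(G)$, Proposition~\ref{prop:invariant-irr} gives $\theta=\chi_u$ for a unique $u\in B^A$. Lemma~\ref{lem:gamma-values} then says the characters over $\theta$ are the $21$ characters $\chi_{u,\psi}$, with values $\psi(a)\sum_{t\in S_k/V_u}(-1)^{\langle\tau_tu,c\rangle}$ on $ca$. Since $\psi$ is linear, $\psi(a)\ne0$, so nonvanishing on $K$ does not depend on $\psi$. It reduces to two requirements: the sum is nonzero for all $c\in C$ and every $k$. For $k\equiv0$ we have $S_0=V$ and the sum is $|V_u|$ times $\chi_u(c)$ by Lemma~\ref{lem:value-formula}, so this case says exactly $\chi_u\in N_A(G)$. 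For $k\not\equiv0$, Lemma~\ref{lem:k-nonzero} gives nonvanishing automatically once $\chi_u\in N_A(G)$. So the characters of $\Gamma$ nonvanishing on $K$ are precisely the $\chi_{u,\psi}$ with $\chi_u\in N_A(G)$ and $\psi\in\Irr(A)$. These are distinct, so their number is $21|N_A(G)|=21\cdot1728=36288$ by Theorem~\ref{thm:count}.

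Finally, $K=C\times A$ is abelian of order $86016$ by Proposition~\ref{prop:carter}, so $|K/K'|=86016\neq36288$. This refutes Problem 6.7. The only delicate point is the first case: the constituent $\theta$ of a non-invariant character need not be unique, but the normality argument makes that irrelevant. For it I only need $a\notin A_\theta$ together with the standard fact that an induced character from a normal subgroup vanishes outside it.
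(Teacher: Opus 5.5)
Your proposal is correct and follows the paper's proof essentially step for step. You rule out non-invariant constituents because the inertia group $GA_\theta$ is normal ($\Gamma/G\cong A$ is abelian), so the induced character vanishes at a generator of $A$; for $\theta=\chi_u$ you combine Lemma~\ref{lem:gamma-values} at $k=0$ with Lemma~\ref{lem:k-nonzero} and Theorem~\ref{thm:count}, exactly as the paper does. One harmless slip: since $S_0/V_u=V/V_u$, the $k=0$ sum equals $\chi_u(c)$ itself by Lemma~\ref{lem:value-formula}, not $|V_u|\,\chi_u(c)$, and this does not affect the nonvanishing equivalence.
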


\begin{proof}
Let $\chi\in\Irr(\Gamma)$ lie over $\theta\in\Irr(G)$ and suppose
$\chi$ does not vanish on $K$.  Since $\Gamma/G\cong A$ is abelian,
the inertia group $\Gamma_\theta$ is normal in $\Gamma$ and equals
$G\,A_\theta$, where $A_\theta=\Gamma_\theta\cap A$.  If $\theta$ is not
$A$-invariant then $A_\theta<A$, and $\chi=\Ind_{\Gamma_\theta}^\Gamma\psi$
vanishes off $\Gamma_\theta$; choosing $a\in A\setminus A_\theta\subseteq K$
gives $\chi(a)=0$, a contradiction.  So $\theta\in\Irr_A(G)$, and
$\theta=\chi_u$ for a unique $u\in B^A$ by
Proposition~\ref{prop:invariant-irr}.

By Lemma~\ref{lem:gamma-values}, $\chi=\chi_{u,\psi}$ for some
$\psi\in\Irr(A)$, and taking $k=0$ there gives $S_0=V$ and
$\chi(c)=\theta(c)$ for $c\in C$.  Hence $\theta\in N_A(G)$.
Conversely, if $\theta=\chi_u\in N_A(G)$ then all $21$ characters
$\chi_{u,\psi}$ are nonvanishing on $K$: the case $k=0$ is the
definition of $N_A(G)$, and the cases $k\not\equiv0$ are
Lemma~\ref{lem:k-nonzero}.  So the characters of $\Gamma$ nonvanishing
on $K$ are exactly the $21|N_A(G)|=36288$ characters $\chi_{u,\psi}$
with $\chi_u\in N_A(G)$, by Theorem~\ref{thm:count}.  Finally $K$ is
abelian, so $|K/K'|=|K|=86016>36288$.
\end{proof}

\begin{corollary}\label{cor:carter-head}
The head characters of $\Gamma$ are not the Carter-nonvanishing
irreducible characters of $\Gamma$.  This answers the question of
\cite[\S6]{isaacs-carter} negatively.
\end{corollary}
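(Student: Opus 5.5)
The plan is to combine Theorem~A of \cite{isaacs-carter} with Theorem~\ref{thm:carter}. By Proposition~\ref{prop:carter}, $\Gamma$ is solvable and $K=C\times A$ is a Carter subgroup. Isaacs' Theorem~A then says that the head characters of $\Gamma$ number exactly $|K/K'|$. Since $K$ is abelian, this number is $|K|=86016$.

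On the other hand, Theorem~\ref{thm:carter} shows that exactly $36288$ irreducible characters of $\Gamma$ vanish at no element of $K$. Two sets of different cardinality cannot coincide, and $36288\neq 86016$, so the set of head characters is not the set of Carter-nonvanishing characters. In fact the count shows more: at least $86016-36288=49728$ head characters have a zero on $K$, so the inclusion ``head $\Rightarrow$ nonvanishing on $K$'' fails.

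The only point needing care is that Isaacs' count applies to our $\Gamma$ and $K$. Theorem~A is stated for arbitrary finite solvable groups and any Carter subgroup, and all Carter subgroups are conjugate, so it applies directly. No computation beyond Theorem~\ref{thm:carter} is required, and the question of \cite[\S6]{isaacs-carter}, equivalently Problem~6.7, is answered negatively.
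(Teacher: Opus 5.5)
Your proposal is correct and is essentially the paper's proof: Isaacs' Theorem~A gives $|K/K'|=86016$ head characters because $K$ is abelian, Theorem~\ref{thm:carter} gives $36288$ Carter-nonvanishing characters, and the two sets therefore have different sizes. Your extra remark, that at least $49728$ head characters vanish somewhere on $K$, follows correctly from the same count.
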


\begin{proof}
By \cite[Theorem A]{isaacs-carter} the set of head characters of
$\Gamma$ is in bijection with $\operatorname{Lin}(K)$, so it has
$|K/K'|=86016$ elements, while Theorem~\ref{thm:carter} gives $36288$
Carter-nonvanishing characters.
\end{proof}

\begin{remark}
The ratio is again $36288/86016=27/64$, and by
Corollary~\ref{cor:powers} the same construction applied to $G^n$ makes
it $(27/64)^n$.  Since $|\Gamma|=2^{135}\cdot21$, the computer searches
reported in \cite[\S6]{isaacs-carter} could not have found it.
\end{remark}

As with Theorem~\ref{thm:general}, the counterexample does not depend on
the particular $V$: only the exact value $36288$ does.

\begin{theorem}\label{thm:carter-general}
Let $A$, $V$, $G$ be as in Theorem~\ref{thm:general} with $(A,V)$
balanced, and assume in addition that $A$ is abelian.  Put
$\Gamma=G\rtimes A$ and $K=C\times A$.  Then $\Gamma$ is solvable, $K$
is an abelian Carter subgroup of $\Gamma$, and the number of
$\chi\in\Irr(\Gamma)$ that do not vanish on $K$ is at most
\[
 |A|\,|N_A(G)|<|A|\,|C|=|K/K'| .
\]
Hence Problem 6.7 has a negative answer for $\Gamma$, and the head
characters of $\Gamma$ are not its Carter-nonvanishing characters.
\end{theorem}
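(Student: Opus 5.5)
The plan is to rerun the proofs of Proposition~\ref{prop:carter} and Theorem~\ref{thm:carter}, keeping only the steps that do not use the specific $V$ or the exact count.

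\textbf{Solvability and the Carter subgroup.} $\Gamma$ has normal $2$-subgroup $G$ with quotient $A$ of odd order. By Feit--Thompson, or directly because $A$ is abelian, $\Gamma$ is solvable. Since $C\le G$ is a $2$-group centralized by the abelian odd-order group $A$, $K=C\times A$ is abelian, hence nilpotent. The argument of Proposition~\ref{prop:carter} then applies verbatim: $A$ is the Hall $2'$-subgroup of $K$, hence characteristic in $K$, so $N_\Gamma(K)\le N_\Gamma(A)$. If $g\in G$ normalizes $A$ then $[g,a]\in A\cap G=1$, so $N_\Gamma(A)=C_G(A)A=K$. Thus $K$ is self-normalizing nilpotent, a Carter subgroup, and $|K/K'|=|K|=|A|\,|C|$.

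\textbf{Bounding the nonvanishing characters.} Let $\chi\in\Irr(\Gamma)$ be nonvanishing on $K$ and lie over $\theta\in\Irr(G)$. As in the first paragraph of the proof of Theorem~\ref{thm:carter}, $\Gamma/G\cong A$ is abelian, so $\Gamma_\theta=GA_\theta$ is normal. If $A_\theta<A$, then $\chi$ is induced from $\Gamma_\theta$ and vanishes at any $a\in A\setminus A_\theta\subseteq K$. Hence $\theta\in\Irr_A(G)$. Moreover $\chi_G=e\theta$ because $G\trianglelefteq\Gamma$ and $\theta$ is $\Gamma$-invariant, so $\chi(c)=e\,\theta(c)\ne0$ for $c\in C$, giving $\theta\in N_A(G)$.

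\textbf{Counting characters over each $\theta$.} Because $|\Gamma/G|$ is coprime to $|G|$, the invariant $\theta$ extends to $\Gamma$; one can cite the coprime extension theorem \cite[Corollary 8.16]{isaacs-book}, or \cite[Corollary 11.22]{isaacs-book} when $A$ is cyclic. Gallagher's theorem then shows the characters over $\theta$ are $\hat\theta\beta$ with $\beta\in\Irr(\Gamma/G)$. Since $A$ is abelian there are exactly $|A|$ of them. So at most $|A|\,|N_A(G)|$ characters of $\Gamma$ are nonvanishing on $K$.

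\textbf{Strict inequality and conclusion.} Theorem~\ref{thm:general} gives $|N_A(G)|<|C/C'|=|C|$, so $|A|\,|N_A(G)|<|A|\,|C|=|K/K'|$, which refutes Problem 6.7. By \cite[Theorem A]{isaacs-carter} the head characters number $|K/K'|$, so they cannot coincide with the Carter-nonvanishing characters.

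The only step needing care is the extension of $\theta$ for general abelian $A$, since $A$ need not be cyclic; coprimality resolves it. We need no analogue of Lemma~\ref{lem:k-nonzero}, because only an upper bound is claimed.
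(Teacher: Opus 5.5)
Your proof is correct and follows the paper's argument essentially step for step: $K$ is shown to be a Carter subgroup via the characteristic Hall $2'$-subgroup $A$ of $K$; $\theta\in\Irr_A(G)$ is forced because an induced character vanishes at $a\in A\setminus A_\theta$; coprime extension plus Gallagher gives exactly $|A|$ characters over each $\theta$; and Theorem~\ref{thm:general} supplies the strict inequality. The only cosmetic difference is that you obtain $\chi(c)=e\,\theta(c)$ from Clifford's theorem, whereas the paper writes $\chi=\hat\theta\beta$ with $\beta$ linear and uses $\beta(c)=1$.
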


\begin{proof}
$G$ is a $2$-group and $A$ is abelian, so $\Gamma$ is solvable, and the
proof of Proposition~\ref{prop:carter} applies verbatim: $K=C\times A$
is abelian, hence nilpotent, and self-normalizing, so it is a Carter
subgroup, of order $|C|\,|A|$.

Let $\chi\in\Irr(\Gamma)$ be nonvanishing on $K$ and lie over
$\theta\in\Irr(G)$.  Since $\Gamma/G\cong A$ is abelian, the inertia
group $\Gamma_\theta\ge G$ is normal in $\Gamma$, and if $\theta$ were
not $A$-invariant, then $\chi$, being induced from $\Gamma_\theta$,
would vanish at any $a\in A\setminus A_\theta\subseteq K$, where
$A_\theta=\Gamma_\theta\cap A$.  So $\theta\in\Irr_A(G)$.  As
$(|A|,|G|)=1$ and $\Gamma$ splits over $G$, the character $\theta$
extends to some $\hat\theta$, and by Gallagher's theorem the characters
of $\Gamma$ over $\theta$ are the $|A|$ characters $\hat\theta\beta$
with $\beta\in\Irr(A)$; all $\beta$ are linear because $A$ is abelian.
For $c\in C\le G$ we have $\beta(c)=1$, so $\chi$ and $\theta$ agree on
$C$, and nonvanishing of $\chi$ on $C$ forces $\theta\in N_A(G)$.
There are therefore at most $|A|\,|N_A(G)|$ such characters
$\chi$.  Theorem~\ref{thm:general} gives $|N_A(G)|<|C|$, and $K$ is
abelian, so $|K/K'|=|K|=|C|\,|A|$.  The last sentence follows from
\cite[Theorem A]{isaacs-carter} as in Corollary~\ref{cor:carter-head}.
\end{proof}

\begin{remark}
Combining Theorem~\ref{thm:carter-general} with
Proposition~\ref{prop:infinitely-many}, there are infinitely many
solvable groups $\Gamma$ for which the number of Carter-nonvanishing
irreducible characters is strictly smaller than $|K/K'|$.  The smallest
we obtain is the one of Theorem~\ref{thm:carter}, of order
$2^{135}\cdot21$.
\end{remark}

\section{Scope: the surrounding problems}\label{sec:navarro}

Problem 6.3 sits inside a group of related problems in
\cite{navarro-problems}, and Navarro raises it there as an instance of a
general principle: for a distinguished subgroup $H\le G$ and a natural
bijection ${}^*$ from a canonical subset of $\Irr(G)$ onto one of
$\Irr(H)$, one expects $\chi^*$ to be linear exactly when $\chi_H$ is
never zero \cite[p.~189]{navarro-problems}.  Corollary~\ref{cor:criterion}
refutes that principle for the Glauberman--Isaacs correspondence.  Its
reach is limited, and we record the limits.

\emph{The hypothesis that $A$ be a $p$-group cannot be dropped.}
Both Navarro \cite[p.~189]{navarro-problems} and Isaacs
\cite[\S7]{isaacs-carter} observe that an argument with roots of unity
proves one direction of Problem 6.3 when $A$ is a $p$-group: if $A$ is a
$p$-group and $\chi^*(1)=1$, then $\chi_C$ is never zero.  In our
example $|A|=21$ is divisible by two primes, and $\chi^*(1)=1$ holds for
all $4096$ invariant characters while $2368$ of them vanish somewhere on
$C$.  So that observation becomes false the moment $A$ is allowed two
prime divisors, and the restriction to $p$-groups in it is essential
rather than an artifact of the proof.

Three neighboring statements are untouched, for reasons worth stating
briefly.  Problem 6.1 asks whether a $\chi\in\Irr(\Gamma)$ nonvanishing
on $N_\Gamma(P)$, for $P\in\operatorname{Syl}_p(\Gamma)$, must have
degree prime to $p$; for the $\Gamma$ of Section~\ref{sec:carter} it
holds at every $p$, since at $p=2$ we have $N_\Gamma(G)=\Gamma$ and
Burnside applies, while at $p\in\{3,7\}$ the argument of
Theorem~\ref{thm:carter} forces $\chi$ to lie over an $A$-invariant
$\theta$, whence $\chi(1)=\theta(1)$ is a power of $2$.  Problem 6.5
and Theorem 6.6 of \cite{navarro-problems} take as hypothesis only the
$p$-group case of Problem 6.3, which our example does not touch, so
both remain open --- and by the previous paragraph that case is now the
only one available to them.  Finally the problems of
\cite[\S5]{navarro-problems}, on elements where the characters of
degree prime to $p$ do not vanish, are vacuous or trivial for a
$2$-group: at $p=2$ the odd-degree characters are the linear ones and
every element is a $2$-element, and at odd $p$ we have
$\mathbf O_{p'}(G)=G$.

What the example does establish, beyond the two counts of
Theorem~\ref{thm:main}, is that nonvanishing on $C$ is not determined by
$\chi^*$, and that any repair of Problem 6.3 must either restrict the
operator group --- to a $p$-group, say --- or use more of $\chi$ than
its correspondent.  The $p$-group case is the one that survives, and it
is the natural remaining question: by the argument of
\cite[\S7]{isaacs-carter} one of its two directions is a theorem, so a
counterexample there would have to refute the other, and by
Proposition~\ref{prop:prime-fails} it would have to be built
differently from ours.

\section{Verification}\label{sec:verification}

The proofs above are self-contained; the computations described here are
independent checks and are used nowhere in them.  All of them are exact,
in integer and $\F_2$ arithmetic with no floating point, and all are
available in \cite{code}.

Three programs compute $|N_A(G)|$ by different routes --- a $128$-point
integer Walsh transform, a direct computation in $\F_4^2\oplus\F_8$ from
the radial correlation table, and a binary linear-algebra test built on
the twenty zero patterns of Lemma~\ref{lem:zero-patterns} --- and all
three return $1728$.  A fourth evaluates the characters of $\Gamma$ on
$K$ by Lemma~\ref{lem:gamma-values} and returns the $36288$ of
Theorem~\ref{thm:carter}.  Further scripts reproduce the design
enumeration of Section~\ref{sec:design} and check
Theorem~\ref{thm:dim-six} twice over: once through the support-sum
criterion on which its proof rests, and once through an independent
enumeration of the odd-order subgroups of the relevant groups
$\Gamma L(m,2^e)$, which confirms its conclusion in dimension at most
$5$.  An accompanying test suite checks the orbit sizes, the correlation
identity~\eqref{eq:correlation}, the twenty zero patterns, and the
vanishing of Theorem~\ref{thm:counterexample}.

Since $|G|=2^{135}$, the group cannot be constructed in a computer
algebra system, and nothing above requires it: every computation runs on
$V$, with $128$ points and $12$ orbits, on $B^A$, with $4096$ elements,
or on multisets of divisors and small matrix groups over $\F_2$.

A further script in \cite{code} checks the ingredients in GAP \cite{gap}, using that
system's own character theory rather than any formula proved here.  It
confirms the arithmetic of Section~\ref{sec:design} at $\dim V=7$, and
then, in the same family $G=C_2\wr V$ at $\dim V=2$ and $3$, the
structural statements of Section~\ref{sec:family}: that $C_G(A)=B^A$ is
elementary abelian of the predicted rank, that
$|\Irr_A(G)|=|C/C'|$, and that
$\chi_{\delta_0}(c)=|V|-2|\supp c|$ at every $A$-stable $c$, which is
Lemma~\ref{lem:delta-values}, the identity producing the zero.  It also
confirms that $K=C\times A$ is a Carter subgroup of $\Gamma$, as
Proposition~\ref{prop:carter} asserts.  We single out two further runs.
Dropping $C_V(A)=0$ admits a balanced $V$ of dimension $3$, where GAP
displays vanishing invariant characters outright: there $|G|=2048$, $C$
is nonabelian of order $32$, and $|\Irr_A(G)|=|\Irr(C)|=14$ while
$|C/C'|=8=|N_A(G)|$, so Problem 21.100 survives because the slack
between $|C/C'|$ and $|\Irr(C)|$ absorbs the zeros.  Removing that slack
is exactly what the fixed-point-free hypothesis does.  And a search over
all $968$ coprime cyclic actions on all groups of order at most $63$
finds no counterexample to Problem 21.100 anywhere, which places the
size of ours in context.

\section*{Acknowledgments}

The author thanks Abhinav Namboori for helpful discussions and comments on the exposition.

The author used OpenAI's ChatGPT (GPT-5.6 Thinking) during the
preparation of this manuscript for mathematical exploration, checking
intermediate arguments and edge cases, generating exact verification
code, and editorial revision.  The author reviewed the resulting
arguments and takes responsibility for the contents of the paper.


\begin{thebibliography}{9}

\bibitem{clifford}
A.~H.~Clifford,
\emph{Representations induced in an invariant subgroup},
Ann. of Math. (2) \textbf{38} (1937), 533--550.

\bibitem{feit-thompson}
W.~Feit and J.~G.~Thompson,
\emph{Solvability of groups of odd order},
Pacific J. Math. \textbf{13} (1963), 775--1029.

\bibitem{gap}
The GAP Group,
\emph{GAP -- Groups, Algorithms, and Programming}, Version 4.14.0, 2024,
\url{https://www.gap-system.org}.

\bibitem{glauberman}
G.~Glauberman,
\emph{Correspondences of characters for relatively prime operator
groups},
Canad. J. Math. \textbf{20} (1968), 1465--1488.

\bibitem{code}
E. Hou, \emph{Coprime actions and characters nonvanishing on the
fixed-point subgroup: manuscript and verification code}, Zenodo, 2026,
\url{https://doi.org/10.5281/zenodo.21709999}.

\bibitem{isaacs1973}
I.~M.~Isaacs,
\emph{Characters of solvable and symplectic groups},
Amer. J. Math. \textbf{95} (1973), 594--635.

\bibitem{isaacs-book}
I.~M.~Isaacs,
\emph{Character Theory of Finite Groups},
Academic Press, New York, 1976.

\bibitem{isaacs-carter}
I.~M.~Isaacs,
\emph{Carter subgroups, characters and composition series},
Trans. Amer. Math. Soc. Ser. B \textbf{9} (2022), 470--498.

\bibitem{isaacs-navarro-wolf}
I.~M.~Isaacs, G.~Navarro and T.~R.~Wolf,
\emph{Finite group elements where no irreducible character vanishes},
J. Algebra \textbf{222} (1999), no.~2, 413--423.

\bibitem{kourovka}
E.~I.~Khukhro and V.~D.~Mazurov (eds.),
\emph{Unsolved Problems in Group Theory: The Kourovka Notebook},
No.~21, Sobolev Institute of Mathematics, Novosibirsk, 2026,
Problem~21.100, p.~178; arXiv:1401.0300v45.

\bibitem{navarro-book}
G.~Navarro,
\emph{Character Theory and the McKay Conjecture},
Cambridge Studies in Advanced Mathematics 175,
Cambridge University Press, Cambridge, 2018.

\bibitem{navarro-problems}
G.~Navarro,
\emph{Problems on characters: solvable groups},
Publ. Mat. \textbf{67} (2023), 173--198,
Problem~6.3, p.~189.

\end{thebibliography}
\end{document}